\theoremstyle{definition}
\newtheorem{thm}{Theorem}[section]
\newtheorem{lem}[thm]{Lemma}
\newtheorem{prop}[thm]{Proposition}
\newtheorem{defi}[thm]{Definition}
\newtheorem{cor}[thm]{Corollary}
\newtheorem{rem}[thm]{Remark}
\numberwithin{equation}{section}
\newcommand{\Sc}{\operatorname{Sc}}
\renewcommand{\Im}{\operatorname{Im}}
\newcommand{\ran}{\operatorname{ran}}
\newcommand{\dom}{\operatorname{dom}}
\newcommand{\graph}{\operatorname{graph}}
\newcommand{\Arg}{\operatorname{Arg}}
\newcommand{\poly}{{\text{poly}}}
\newcommand\essran{\operatorname{essran}}
\newcommand\esssup{\operatorname{esssup}}
\title[]{The $S$-functional calculus for the \\ Clifford adjoint operator}
\author[Fabrizio Colombo]{Fabrizio Colombo}
\address{(FC) Politecnico di Milano, Dipartimento di Matematica, Via E. Bonardi 9, 20133 Milano, Italy}
\email{fabrizio.colombo@polimi.it}
\author[Francesco Mantovani]{Francesco Mantovani}
\address{(FM) Politecnico di Milano, Dipartimento di Matematica, Via E. Bonardi 9, 20133 Milano, Italy}
\email{francesco.mantovani@polimi.it}
\author[Peter Schlosser]{Peter Schlosser}
\address{(PS) Graz University of Technology, Institute of Applied Mathematics, Steyrergasse 30, 8010 Graz, Austria}
\email{pschlosser@math.tugraz.at}
\begin{document}

\begin{abstract}
In this paper, we work within the framework of modules over the Clifford algebra $\mathbb{R}_d$. Our investigation focuses on the $S$-spectrum and the $S$-functional calculus in its various forms for the adjoint $T^*$ of a Clifford operator $T$. One of the key results we present is that the bisectoriality of $T$ can be transferred to $T^*$. This is grounded in the fact that, for Clifford operators, the $S$-spectrum of the adjoint operator $T^*$ is identical to that of $T$. Furthermore, we demonstrate that for the existing formulations of the $S$-functional calculus, including bounded, unbounded, $\omega$, and $H^\infty$ versions, there is a clear connection between the left functional calculus of $T$ and the right functional calculus of $T^*$. This explicit link between the left functional calculus of $T$ and the right functional calculus of $T^*$ and vice versa is obtained using the function $f^\#(s):=\overline{f(\overline{s})}$. Finally, we discuss the fact that the $S$-spectrum, related to the invertibility of the second-order operator $T^2-2s_0T+|s|^2$, can actually be characterized through the invertibility of the first-order $\mathbb{R}$-linear operator $T-\mathcal{I}^Rs$.
\end{abstract}

\maketitle

AMS Classification 47A10, 47A60. \medskip

Keywords: $S$-spectrum, Adjoint operator, $S$-functional calculus, $H^\infty$-functional calculus, Multiplication operator.

\section{Introduction}

Quaternionic and Clifford operators are essential in several areas of mathematics and physics, including quaternionic quantum mechanics, vector analysis, differential geometry, and hypercomplex analysis, for more details and references see Section~\ref{SEC_CONCLUD_RMK}. The spectral theory for Clifford operators differs from the classical complex theory. This distinction arises from the noncommutativity of the spectral parameter $s\in\mathbb{R}^{d+1}$ and the operator $T$, and manifests itself in the definition of the spectrum, which is related to the bounded invertibility of the second order operator
\begin{equation}\label{Eq_Qs}
Q_s[T]:=T^2-2s_0T+|s|^2,\qquad\text{with }\dom(Q_s[T]):=\dom(T^2).
\end{equation}
As discussed in \cite{ColomboSabadiniStruppa2011,ColSab2006} and detailed in \cite{FJBOOK,CGK}, the $S$-spectrum of $T$ is then defined as
\begin{equation}\label{Eq_S_spectrum_intro}
\sigma_S(T):=\big\{s\in\mathbb{R}^{d+1}\;\big|\;Q_s[T]^{-1}\notin\mathcal{B}(V)\big\},
\end{equation}
where $\mathcal{B}(V)$ denotes the set of bounded, everywhere defined operators on the Clifford module $V$. In this context, for every $s\notin\sigma_S(T)$, the two $S$-resolvent operators, both left and right, are defined as
\begin{equation}\label{Eq_S_resolvent_operators}
S_L^{-1}(s,T):=Q_s[T]^{-1}\overline{s}-TQ_s[T]^{-1}\qquad\text{and}\qquad S_R^{-1}(s,T):=(\overline{s}-T)Q_s[T]^{-1}.
\end{equation}
These resolvent operators are now crucial for defining the $S$-functional calculus, which is the Clifford analog of the Riesz-Dunford functional calculus in \cite{RD}. Roughly speaking, the $S$-functional calculus is defined as
\begin{subequations}\label{Eq_SFC}
\begin{align}
f(T):=&\frac{1}{2\pi}\int_{\partial U\cap\mathbb{C}_J}S_L^{-1}(s,T)ds_Jf(s),\qquad\text{or} \label{Eq_SFC_left} \\
f(T):=&\frac{1}{2\pi}\int_{\partial U\cap\mathbb{C}_J}f(s)ds_JS_R^{-1}(s,T), \label{Eq_SFC_right}
\end{align}
\end{subequations}
where $U$ is a suitable open set containing the $S$-spectrum of $T$, and $J$ is some arbitrary imaginary unit of the Clifford algebra. Since in a Clifford algebra one distinguishes between left and right slice hyperholomorphic functions, we need two different versions of the $S$-functional calculi, \eqref{Eq_SFC_left} for left slice hyperholomorphic functions $f$, and \eqref{Eq_SFC_right} for right slice hyperholomorphic functions. \medskip

Depending on whether the operator $T$ is bounded or unbounded, and depending on the decay properties of the function $f$, the integrals \eqref{Eq_SFC} have to be interpreted differently. Precise definitions of the several cases are given in the subsections of Section~\ref{sec_SFC}. In particular, since the $S$-spectrum for unbounded operators is in general not bounded, one has to adapt the integration path accordingly, or impose decay conditions on the function $f$, in order to ensure the convergence of the integrals \eqref{Eq_SFC}, see Definition~\ref{defi_Unbounded_SFC} or Definition~\ref{defi_omega_SFC}. \medskip

Only a regularization procedure, called $H^\infty$-functional calculus, then allows to extend \eqref{Eq_SFC} to functions for which the integrals diverge. In the complex setting, the $H^\infty$-functional calculus was first introduced by A. McIntosh in \cite{McI1}, with further developments and applications discussed in \cite{MC10,MC97,MC06,MC98}. It is a key instrument in spectral theory, particularly for differential operators, and it has applications in various fields as parabolic evolution equations, Kato’s square root problem, elliptic systems, and Schrödinger operators, see \cite{Haase,HYTONBOOK1,HYTONBOOK2}. \medskip

In the Clifford setting, the $H^\infty$-functional calculus for left slice hyperholomorphic functions is already well established in \cite{ACQS2016,CGdiffusion2018,FJBOOK}. It is given by
\begin{equation}\label{Eq_Hinfty_left}
f(T):=e(T)^{-1}(ef)(T),
\end{equation}
where both, the regularizer $e(T)$ and the regularized operator $(ef)(T)$ are interpreted as the integral \eqref{Eq_SFC_left}. Detailed properties of this calculus are examined in \cite{MS24}, based on the universality property of the $S$-functional calculus established in \cite{ADVCGKS}. The $H^\infty$-functional calculus defines in general unbounded operators, while in \cite{Quadratic}, boundedness has been characterized using suitable quadratic estimates. \medskip

However, it was unknown until the recent paper \cite{RIGHT}, how the regularization formula \eqref{Eq_Hinfty_left} adapts for right slice holomorphic functions. The key idea is to look at the right $H^\infty$-functional calculus as
\begin{equation}\label{Eq_Hinfty_right}
f(T):=\overline{(fe)(T)e(T)^{-1}},
\end{equation}
where it is a priori not known if $(fe)(T)e(T)^{-1}$ is closable. Hence the closure leads to a multivalued operator $f(T)$ in the sense of Definition~\ref{defi_Multivalued_operators}. \medskip

The primary aim of the present article is to investigate the $S$-spectral theory of the adjoint operator $T^*$ of a densely defined right-linear operator $T:\dom(T)\subseteq V\rightarrow V$, acting in a Hilbert module $V$ over the Clifford algebra $\mathbb{R}_d$. In particular, we prove in Theorem~\ref{thm_Spectrum_Tstar} that the $S$-spectrum \eqref{Eq_S_spectrum_intro} of the adjoint operator coincides with the one of the original operator
\begin{equation*}
\sigma_S(T^*)=\sigma_S(T),
\end{equation*}
and also that the respective $S$-resolvent operators of $T^*$ and $T$ are connected via
\begin{equation*}
S_L^{-1}(s,T^*)=S_R^{-1}(\overline{s},T)^*\qquad\text{and}\qquad S_R^{-1}(s,T^*)=S_L^{-1}(\overline{s},T)^*.
\end{equation*}
We will also investigate the various versions of the $S$-functional calculus \eqref{Eq_SFC} of $T^*$. Starting with bounded operators in Theorem~\ref{thm_Bounded_Tstar}, continuing with unbounded operators in Theorem~\ref{thm_Unbounded_Tstar} and heading to the $\omega$-functional calculus of unbounded bisectorial operators in Theorem~\ref{thm_omega_SFC_Tstar}. In all the cases, we will show the connection
\begin{equation}\label{Eq_fTstar}
f(T^*)=f^\#(T)^*,
\end{equation}
between the functional calculus of $T^*$ and $T$, using the function
\begin{equation*}
f^\#(s):=\overline{f(\overline{s})}.
\end{equation*}
We also prove in Theorem~\ref{thm_Hinfty_Tstar}, that the formula \eqref{Eq_fTstar} holds for the $H^\infty$-functional calculus \eqref{Eq_Hinfty_left} and \eqref{Eq_Hinfty_right}, if one interprets both sides accordingly as unbounded or multivalued operators. \medskip

Additionally, we present some preliminary results in Section~\ref{sec_Preliminaries} in a more general setting for Banach modules, which are of independent interest. The most important one of them is Theorem~\ref{thm_rhoS_via_Is}, which characterizes the $S$-spectrum in terms of a first order operator. More precisely, for every $s\in\mathbb{R}^{d+1}$, with the right multiplication operator
\begin{equation}\label{Eq_IRs}
(\mathcal{I}^Rs)(v):=vs,\qquad v\in V,
\end{equation}
we can write the $S$-spectrum as
\begin{equation*}
\sigma_S(T)=\big\{s\in\mathbb{R}^{d+1}\;\big|\;(T-\mathcal{I}^Rs)^{-1}\notin\mathcal{B}(V)\big\}.
\end{equation*}
Note that since $T$ is right-linear and $\mathcal{I}^Rs$ is left-linear, their difference turns out to be $\mathbb{R}$-linear only. The idea of such a first order representation of the $S$-spectrum was already discussed in \cite[Theorem 3.1.8]{FJBOOK}, where a similar result was proven, but only on slices $\sigma_S(T)\cap\mathbb{C}_J$. \medskip

In the final Section~\ref{sec_Multiplication_operator} of this article, we present the multiplication operator $M_h$ with a Clifford valued function $h:X\rightarrow\mathbb{R}^{d+1}$ as an application of the $S$-functional calculus. In Theorem~\ref{thm_Spectrum_of_Mh}, we compute its $S$-spectrum, which turns out to not be the essential range as in the complex case, but rather the rotation of the essential range around the real axis
\begin{equation*}
\sigma_S(M_h)=[\essran(h)].
\end{equation*}
This fact is a consequence of the axial symmetry of the $S$-spectrum. Moreover, in Theorem~\ref{thm_Hinfty_Mh} we explicitly calculate the action
\begin{equation*}
f(M_h)=M_{f\circ h}
\end{equation*}
of the $S$-functional calculus on the multiplication operator. This formula means that the functional calculus $f(M_h)$ is again a multiplication operator with the new function $f\circ h$.

\section{Preliminaries on Clifford Algebras and Clifford modules}\label{sec_Preliminaries}

In this section we will fix the algebraic and functional analytic setting of this paper. The underlying algebra will be the real \textit{Clifford algebra} $\mathbb{R}_d$ over $d$ \textit{imaginary units} $e_1,\dots,e_d$, which satisfy the relations
\begin{equation*}
e_i^2=-1\qquad\text{and}\qquad e_ie_j=-e_je_i,\qquad i\neq j\in\{1,\dots,d\}.
\end{equation*}
More precisely, $\mathbb{R}_d$ is given by
\begin{equation*}
\mathbb{R}_d:=\Big\{\sum\nolimits_{A\in\mathcal{A}}s_Ae_A\;\Big|\;s_A\in\mathbb{R},\,A\in\mathcal{A}\Big\},
\end{equation*}
using the index set
\begin{equation*}
\mathcal{A}:=\big\{(i_1,\dots,i_r)\;\big|\;r\in\{0,\dots,d\},\,1\leq i_1<\dots<i_r\leq d\big\},
\end{equation*}
and the \textit{basis vectors} $e_A:=e_{i_1}\dots e_{i_r}$. Note that for $A=\emptyset$ the empty product of imaginary units is the real number $e_\emptyset=1$. Furthermore, we define for every Clifford number $s\in\mathbb{R}_d$ its \textit{conjugate} and its \textit{absolute value}
\begin{equation}\label{Eq_Conjugate_Norm}
\overline{s}:=\sum\nolimits_{A\in\mathcal{A}}(-1)^{\frac{|A|(|A|+1)}{2}}s_Ae_A\qquad\text{and}\qquad|s|^2:=\sum\nolimits_{A\in\mathcal{A}}s_A^2.
\end{equation}
An important subset of the Clifford numbers are the so called \textit{paravectors}
\begin{equation*}
\mathbb{R}^{d+1}:=\big\{s_0+s_1e_1+\dots+s_de_d\;\big|\;s_0,s_1,\dots,s_d\in\mathbb{R}\big\}.
\end{equation*}
For any paravector $s\in\mathbb{R}^{d+1}$, we define the \textit{imaginary part}
\begin{equation*}
\Im(s):=s_1e_1+\dots+s_de_d.
\end{equation*}
Also for every $s\in\mathbb{R}^{d+1}$, the conjugate and the modulus in \eqref{Eq_Conjugate_Norm} reduce to
\begin{equation*}
\overline{s}=s_0-s_1e_1-\dots-s_de_d\qquad\text{and}\qquad|s|^2=s_0^2+s_1^2+\dots+s_d^2.
\end{equation*}
The sphere of \textit{imaginary units} is defined by
\begin{equation*}
\mathbb{S}:=\big\{J\in\mathbb{R}^{d+1}\;\big|\;J_0=0,\,|J|=1\big\}.
\end{equation*}
Any element $J\in\mathbb{S}$ satisfies $J^2=-1$ and hence the corresponding \textit{hyperplane}
\begin{equation*}
\mathbb{C}_J:=\big\{x+Jy\;\big|\;x,y\in\mathbb{R}\big\}
\end{equation*}
is an isomorphic copy of the complex numbers. Moreover, for every paravector $s\in\mathbb{R}^{d+1}$ we consider the corresponding \textit{$(d-1)$--sphere}
\begin{equation*}
[s]:=\big\{s_0+J|\Im(s)|\;\big|\;J\in\mathbb{S}\big\}.
\end{equation*}
A subset $U\subseteq\mathbb{R}^{d+1}$ is called \textit{axially symmetric}, if $[s]\subseteq U$ for every $s\in U$. \medskip

Next, we introduce the notion of slice hyperholomorphic functions $f:U\rightarrow\mathbb{R}_d$, defined on an axially symmetric open set $U\subseteq\mathbb{R}^{d+1}$.

\begin{defi}[Slice hyperholomorphic functions]
Let $U\subseteq\mathbb{R}^{d+1}$ be open, axially symmetric and consider
\begin{equation*}
\mathcal{U}:=\big\{(x,y)\in\mathbb{R}^2\;\big|\;x+\mathbb{S}y\subseteq U\big\}.
\end{equation*}
A function $f:U\rightarrow\mathbb{R}_d$ is called \textit{left} (resp. \textit{right}) \textit{slice hyperholomorphic}, if there exist continuously differentiable functions $f_0,f_1:\mathcal{U}\rightarrow\mathbb{R}_d$, such that for every $(x,y)\in\mathcal{U}$:

\begin{enumerate}
\item[i)] The function $f$ admits for every $J\in\mathbb{S}$ the representation
\begin{equation}\label{Eq_Holomorphic_decomposition}
f(x+Jy)=f_0(x,y)+Jf_1(x,y),\quad\Big(\text{resp.}\;f(x+Jy)=f_0(x,y)+f_1(x,y)J\Big).
\end{equation}
\item[ii)] The functions $f_0,f_1$ satisfy the \textit{compatibility conditions}
\begin{equation}\label{Eq_Symmetry_condition}
f_0(x,-y)=f_0(x,y)\qquad\text{and}\qquad f_1(x,-y)=-f_1(x,y).
\end{equation}
\item[iii)] The functions $f_0,f_1$ satisfy the \textit{Cauchy-Riemann equations}
\begin{equation}\label{Eq_Cauchy_Riemann_equations}
\frac{\partial}{\partial x}f_0(x,y)=\frac{\partial}{\partial y}f_1(x,y)\qquad\text{and}\qquad\frac{\partial}{\partial y}f_0(x,y)=-\frac{\partial}{\partial x}f_1(x,y).
\end{equation}
\end{enumerate}

The class of left (resp. right) slice hyperholomorphic functions on $U$ is denoted by $\mathcal{SH}_L(U)$ (resp. $\mathcal{SH}_R(U)$). In the special case that $f_0$ and $f_1$ are real valued, we call $f$ \textit{intrinsic} and the space of intrinsic functions by $\mathcal{N}(U)$.
\end{defi}

Next, we turn our attention to modules over $\mathbb{R}_d$. For a real Banach space $V_\mathbb{R}$ with norm $\Vert\cdot\Vert_\mathbb{R}$, we define the corresponding \textit{Banach module}
\begin{equation*}
V:=\Big\{\sum\nolimits_{A\in\mathcal{A}}v_A\otimes e_A\;\Big|\;v_A\in V_\mathbb{R},\,A\in\mathcal{A}\Big\},
\end{equation*}
and equip it with the \textit{norm}
\begin{equation}\label{Eq_Norm}
\Vert v\Vert^2:=\sum\nolimits_{A\in\mathcal{A}}\Vert v_A\Vert_\mathbb{R}^2,\qquad v\in V.
\end{equation}
For any vector $v=\sum_{A\in\mathcal{A}}v_A\otimes e_A\in V$ and any Clifford number $s=\sum_{B\in\mathcal{A}}s_Be_B\in\mathbb{R}_d$, we establish the left and the right scalar multiplication
\begin{align*}
sv:=&\sum\nolimits_{A,B\in\mathcal{A}}(s_Bv_A)\otimes(e_Be_A), && \textit{(left-multiplication)} \\
vs:=&\sum\nolimits_{A,B\in\mathcal{A}}(v_As_B)\otimes(e_Ae_B). && \textit{(right-multiplication)}
\end{align*}
We recall the well known properties of these products, see for example \cite[Lemma 2.1]{MS24}:
\begin{align*}
\Vert sv\Vert&\leq 2^{\frac{d}{2}}|s|\Vert v\Vert\qquad\text{and}\qquad\Vert vs\Vert\leq 2^{\frac{d}{2}}|s|\Vert v\Vert,\qquad\text{if }s\in\mathbb{R}_d, \\
\Vert sv\Vert&=\Vert vs\Vert=|s|\Vert v\Vert,\hspace{4.83cm}\text{if }s\in\mathbb{R}^{d+1}.
\end{align*}
If $V_\mathbb{R}$ is also a real Hilbert space, i.e. equipped with an inner product $\langle\cdot,\cdot\rangle_\mathbb{R}$, we also make $V$ a \textit{Hilbert module} with the inner product
\begin{equation}\label{Eq_Inner_product}
\langle v,w\rangle:=\sum\nolimits_{A,B\in\mathcal{A}}\langle v_A,w_B\rangle_\mathbb{R}\overline{e_A}e_B,\qquad v,w\in V.
\end{equation}
The sesquilinear form \eqref{Eq_Inner_product} is now right-linear in the second, and right-antilinear in the first argument, i.e. for every $u,v,w\in V$, $s\in\mathbb{R}_d$, there holds
\begin{equation}\label{Eq_Inner_product_linearity}
\begin{split}
\langle u,v+w\rangle&=\langle u,v\rangle+\langle u,w\rangle,\hspace{1.5cm} \langle v,ws\rangle=\langle v,w\rangle s, \\
\langle v+w,u\rangle&=\langle v,u\rangle+\langle w,u\rangle,\hspace{1.5cm} \langle vs,w\rangle=\overline{s}\langle v,w\rangle.
\end{split}
\end{equation}
Moreover, there also holds
\begin{equation}\label{Eq_Inner_product_property}
\overline{\langle v,w\rangle}=\langle w,v\rangle\qquad\text{and}\qquad\langle v,sw\rangle=\langle\overline{s}v,w\rangle.
\end{equation}
The norm \eqref{Eq_Norm} is then connected to the inner product \eqref{Eq_Inner_product} by
\begin{equation*}
\Vert v\Vert^2=\Sc\langle v,v\rangle,\qquad v\in V.
\end{equation*}

\begin{rem}\label{rem_Two_scalar_products}
In Hilbert modules over $\mathbb{R}_d$, we consider two inner products that serve different purposes. The full inner product $\langle\cdot,\cdot\rangle$ is used for instance in the Riesz representation theorem, while the $\mathbb{R}$-linear scalar part $\Sc\langle\cdot,\cdot\rangle$ gives raise to the norm $\Vert v\Vert^2=\Sc\langle v,v\rangle$.
\end{rem}

For any Clifford Banach module $V$ we will denote the set of \textit{bounded right-linear operators} by
\begin{equation*}
\mathcal{B}(V):=\big\{T:V\rightarrow V\text{ right-linear}\;\big|\;\dom(T)=V,\,T\text{ is bounded}\big\},
\end{equation*}
and the set of \textit{closed right-linear operators} by
\begin{equation*}
\mathcal{K}(V):=\big\{T:V\rightarrow V\text{ right-linear}\;\big|\;\dom(T)\subseteq V\text{ is right-linear},\,T\text{ is closed}\big\}.
\end{equation*}
In difference to complex Banach spaces, in Cliffordian Banach modules, the spectrum is connected to the bounded invertibility of the second order operator \eqref{Eq_Qs}, which suggests the following definition of $S$-spectrum.

\begin{defi}[$S$-Spectrum]
For every $T\in\mathcal{K}(V)$, let us define the \textit{$S$-resolvent set} and the \textit{$S$-spectrum}
\begin{equation}\label{Eq_S_spectrum}
\rho_S(T):=\big\{s\in\mathbb{R}^{d+1}\;\big|\;Q_s[T]^{-1}\in\mathcal{B}(V)\big\}\qquad\text{and}\qquad\sigma_S(T):=\mathbb{R}^{d+1}\setminus\rho_S(T).
\end{equation}
Moreover, for every $s\in\rho_S(T)$ we define the \textit{left} and the \textit{right $S$-resolvent operator}
\begin{equation}\label{Eq_SL_SR}
S_L^{-1}(s,T):=Q_s[T]^{-1}\overline{s}-TQ_s[T]^{-1}\qquad\text{and}\qquad S_R^{-1}(s,T):=(\overline{s}-T)Q_s[T]^{-1}.
\end{equation}
\end{defi}

Although the $S$-resolvent set is defined via the invertibility of the second order operator $Q_s[T]$ defined on $\dom(T^2)$, we can use the right multiplication operator $\mathcal{I}^Rs$ from \eqref{Eq_IRs} to characterize it via the invertibility of a first order operator, defined on $\dom(T)$.

\begin{thm}\label{thm_rhoS_via_Is}
For every $T\in\mathcal{K}(V)$, the $S$-resolvent set \eqref{Eq_S_spectrum} can be characterized by
\begin{equation}\label{Eq_rhoS_via_Is}
\rho_S(T)=\big\{s\in\mathbb{R}^{d+1}\;\big|\;(T-\mathcal{I}^Rs)^{-1}\in\mathcal{B}(V)\big\},
\end{equation}
and for every $v\in V$ the respective resolvents are connected via
\begin{equation}\label{Eq_Resolvent_via_Is}
Q_s[T]^{-1}=(T-\mathcal{I}^R\overline{s})^{-1}(T-\mathcal{I}^Rs)^{-1}.
\end{equation}
\end{thm}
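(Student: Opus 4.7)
The plan hinges on the algebraic factorization
\[
Q_s[T] \;=\; (T-\mathcal{I}^Rs)(T-\mathcal{I}^R\overline{s}) \;=\; (T-\mathcal{I}^R\overline{s})(T-\mathcal{I}^Rs) \qquad \text{on } \dom(T^2).
\]
First I verify this identity: since $T$ is right-linear it commutes with every right-multiplication $\mathcal{I}^R c$, so either product expands to $T^2 - (\mathcal{I}^Rs+\mathcal{I}^R\overline{s})T + \mathcal{I}^Rs\,\mathcal{I}^R\overline{s}$, and the identities $s + \overline{s} = 2s_0 \in \mathbb{R}$ and $s\overline{s} = \overline{s}s = |s|^2 \in \mathbb{R}$ collapse this to $T^2 - 2s_0 T + |s|^2 I = Q_s[T]$. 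I abbreviate $A := T - \mathcal{I}^Rs$ and $B := T - \mathcal{I}^R\overline{s}$ throughout.

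For the inclusion $\rho_S(T) \subseteq \{s : A^{-1} \in \mathcal{B}(V)\}$, together with the explicit formula \eqref{Eq_Resolvent_via_Is}, I assume $s \in \rho_S(T)$ and propose $R := B\,Q_s[T]^{-1}$ as the candidate for $A^{-1}$. This is everywhere defined on $V$ because $Q_s[T]^{-1}$ lands in $\dom(T^2) \subseteq \dom(B)$, and it is bounded because $T\,Q_s[T]^{-1}$ is bounded (e.g.\ by the closed graph theorem applied to the closed operator $T$ composed with the everywhere-defined bounded $Q_s[T]^{-1}$) and right multiplication by $\overline{s}$ is bounded. The factorization gives $AR = AB\,Q_s[T]^{-1} = I$ on $V$. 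Injectivity of $A$ is handled separately: $Av = 0$ forces $Tv = vs$, hence $v \in \dom(T^2)$ with $T^2 v = vs^2$, so $Q_s[T]v = v(s^2 - 2s_0 s + |s|^2) = 0$, which gives $v = 0$ by invertibility of $Q_s[T]$. Therefore $A^{-1} = R$ exists and is bounded; the same argument with $s$ and $\overline{s}$ swapped shows $B^{-1}$ is bounded too, and $Q_s[T]^{-1} = (AB)^{-1} = B^{-1}A^{-1}$ yields \eqref{Eq_Resolvent_via_Is}.

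For the reverse inclusion I assume $A^{-1} \in \mathcal{B}(V)$ and use the factorization $Q_s[T] = BA$ to reduce the problem to showing $B^{-1} \in \mathcal{B}(V)$. The idea is to produce a bounded $\mathbb{R}$-linear similarity $B = \phi A\phi^{-1}$. If $\Im(s) = 0$ then $s = \overline{s}$ and $A = B$ trivially; otherwise pick any $J \in \mathbb{S}$ orthogonal to $\Im(s)$, so that $J\,\Im(s) = -\Im(s)\,J$ and a short calculation gives $JsJ = -s_0 + \Im(s) = -\overline{s}$. Setting $\phi(v) := vJ$, which is a bounded $\mathbb{R}$-linear bijection with $\phi^{-1}(v) = -vJ$ (since $J^2 = -1$), the right-linearity of $T$ yields
\[
\phi A\phi^{-1}(v) \;=\; \phi\bigl(-T(v)J + vJs\bigr) \;=\; T(v) + vJsJ \;=\; T(v) - v\overline{s} \;=\; Bv,
\]
so $B^{-1} = \phi A^{-1}\phi^{-1}$ is bounded.

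The main obstacle I anticipate is the similarity step in the last paragraph: both $JsJ = -\overline{s}$ and the resulting identity $\phi A\phi^{-1} = B$ hinge on choosing $J \in \mathbb{S}$ orthogonal to $\Im(s)$, which is an incarnation of the axial symmetry built into the Clifford setting. A secondary but pervasive difficulty is the domain bookkeeping for unbounded $T$: I need the stability of $\dom(T)$ under right multiplication (so that $B\,Q_s[T]^{-1}$ is meaningful), the inclusion $\ran(Q_s[T]^{-1}) \subseteq \dom(T^2)$ (so that the factorization can be applied), and careful attention to which domains the identities $AR = I$ and $Q_s[T] = BA$ are really valid on.
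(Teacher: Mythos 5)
Your proof is correct and follows essentially the same route as the paper: both rest on the factorization $Q_s[T]=(T-\mathcal{I}^Rs)(T-\mathcal{I}^R\overline{s})$ on $\dom(T^2)$ and, for the reverse inclusion, on transferring invertibility from $T-\mathcal{I}^Rs$ to $T-\mathcal{I}^R\overline{s}$ via an imaginary unit anticommuting with $\Im(s)$ (your conjugation $\phi(v)=vJ$ is exactly the paper's auxiliary operator $R_s(v)=-(T-\mathcal{I}^Rs)^{-1}(vI)I$ in disguise). The only cosmetic difference is that you establish injectivity of $T-\mathcal{I}^Rs$ by a direct kernel computation ($Av=0$ forces $Q_s[T]v=0$), whereas the paper exhibits an explicit left inverse; both work.
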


\begin{proof}
For the inclusion $\text{\grqq}\subseteq\text{\grqq}$ in \eqref{Eq_rhoS_via_Is} let $s\in\rho_S(T)$, i.e. $Q_s[T]^{-1}\in\mathcal{B}(V)$. Then there holds for every $v\in\dom(T)$
\begin{align}
(T-\mathcal{I}^R\overline{s})Q_s[T]^{-1}(T-\mathcal{I}^Rs)v&=(T-\mathcal{I}^R\overline{s})Q_s[T]^{-1}(Tv-vs) \notag \\
&=(T-\mathcal{I}^R\overline{s})(TQ_s[T]^{-1}v-Q_s[T]^{-1}vs) \notag \\
&=T^2Q_s[T]^{-1}v-TQ_s[T]^{-1}vs-TQ_s[T]^{-1}v\overline{s}+Q_s[T]^{-1}vs\overline{s} \notag \\
&=(T^2-2s_0T+|s|^2)Q_s[T]^{-1}v=v. \label{Eq_rhoS_via_Is_1}
\end{align}
Moreover, for every $v\in V$, there is
\begin{align}
(T-\mathcal{I}^Rs)(T-\mathcal{I}^R\overline{s})Q_s[T]^{-1}v&=\big(T^2-(\mathcal{I}^Rs)T-T(\mathcal{I}^R\overline{s})+(\mathcal{I}^Rs)(\mathcal{I}^R\overline{s})\big)Q_s[T]^{-1}v \notag \\
&=T^2Q_s[T]^{-1}v-TQ_s[T]^{-1}vs-TQ_s[T]^{-1}v\overline{s}+Q_s[T]^{-1}v\overline{s}s \notag \\
&=(T^2-2s_0T+|s|^2)Q_s[T]^{-1}v=v. \label{Eq_rhoS_via_Is_2}
\end{align}
From \eqref{Eq_rhoS_via_Is_1} it now follows that $T-\mathcal{I}^Rs$ is injective, and from \eqref{Eq_rhoS_via_Is_2} that $T-\mathcal{I}^Rs$ is surjective. Altogether we know that $T-\mathcal{I}^Rs$ is bijective and by the closed graph theorem also $(T-\mathcal{I}^Rs)^{-1}\in\mathcal{B}(V)$. \medskip

For the inverse inclusion $\text{\grqq}\supseteq\text{\grqq}$ in \eqref{Eq_rhoS_via_Is}, let $s\in\mathbb{R}^{d+1}$ with $(T-\mathcal{I}^Rs)^{-1}\in\mathcal{B}(V)$. In the \textit{first step} we show that also $(T-\mathcal{I}^R\overline{s})^{-1}\in\mathcal{B}(V)$. To do so, let us choose $J\in\mathbb{S}$ with $s\in\mathbb{C}_J$, and another imaginary unit $I\in\mathbb{S}$, with $IJ=-JI$, possible because of \cite[Proposition 2.2.10]{ColomboSabadiniStruppa2011}. We then consider the $\mathbb{R}$-linear operator
\begin{equation*}
R_s(v):=-(T-\mathcal{I}^Rs)^{-1}(vI)I,\qquad v\in V.
\end{equation*}
Then for every $v\in\dom(T)$, there is
\begin{align}
R_s((T-\mathcal{I}^R\overline{s})v)&=R_s(Tv-v\overline{s})=-(T-\mathcal{I}^Rs)^{-1}\big((Tv-v\overline{s})I\big)I \notag \\
&=-(T-\mathcal{I}^Rs)^{-1}\big((T-\mathcal{I}^Rs)(vI)\big)I=-(vI)I=v, \label{Eq_rhoS_via_Is_3}
\end{align}
where in the third equation we used that due to $IJ=-JI$ there is $\overline{s}I=Is$ and  hence $v\overline{s}I=vIs=\mathcal{I}^Rs(vI)$. Moreover, for every $v\in V$ we also have
\begin{align}
(T-\mathcal{I}^R\overline{s})R_s(v)&=TR_s(v)-R_s(v)\overline{s}=-T(T-\mathcal{I}^Rs)^{-1}(vI)I+(T-\mathcal{I}^Rs)^{-1}(vI)I\overline{s} \notag \\
&=(-T+\mathcal{I}^Rs)\big((T-\mathcal{I}^Rs)^{-1}(vI)\big)I=-(vI)I=v, \label{Eq_rhoS_via_Is_4}
\end{align}
where in the second line we used $Is\overline{s}=sI$, which is again due to $IJ=-JI$. Now, from the identity \eqref{Eq_rhoS_via_Is_3} it follows that $T-\mathcal{I}^R\overline{s}$ is injective and from \eqref{Eq_rhoS_via_Is_4} that $T-\mathcal{I}^R\overline{s}$ is surjective. Hence $T-\mathcal{I}^Rs$ is bijective and by the closed graph theorem then $(T-\mathcal{I}^R\overline{s})^{-1}\in\mathcal{B}(V)$. \medskip

In the \textit{second step}, we derive for every $v\in\dom(T^2)$ the identity
\begin{align*}
(T-\mathcal{I}^Rs)(T-\mathcal{I}^R\overline{s})v&=(T-\mathcal{I}^Rs)(Tv-v\overline{s})=T^2v-Tvs-Tv\overline{s}+v\overline{s}s \\
&=(T^2-2s_0T+|s|^2)v=Q_s[T]v.
\end{align*}
Hence $Q_s[T]=(T-\mathcal{I}^Rs)(T-\mathcal{I}^R\overline{s})$ is the product of two bijective operators, and hence bijective itself. This shows that $s\in\rho_S(T)$, and that
\begin{equation*}
Q_s[T]^{-1}=(T-\mathcal{I}^R\overline{s})^{-1}(T-\mathcal{I}^Rs)^{-1}. \qedhere
\end{equation*}
\end{proof}

\begin{rem}
Note that although the right hand side of the resolvent identity \eqref{Eq_Resolvent_via_Is} consists of the product of two only $\mathbb{R}$-linear operators, their product equals the pseudo resolvent $Q_s[T]^{-1}$ and hence is right-linear again.
\end{rem}

\begin{rem}
In the complex setting, the resolvent condition $(T-\lambda)^{-1}\in\mathcal{B}(V)$ is equivalent to $T-\lambda$ being bijective. This is due to the closed graph theorem, which concludes the boundedness of the inverse operator automatically. This is true also in the Clifford case for the operator $Q_s[T]$. Indeed, the proof of Theorem~\ref{thm_rhoS_via_Is} tells, if $Q_s[T]$ is bijective, then also $T-\mathcal{I}^Rs$ and $T-\mathcal{I}^R\overline{s}$ are both bijective and their inverses are bounded by the closed graph theorem. Hence also
\begin{equation*}
Q_s[T]^{-1}=(T-\mathcal{I}^Rs)^{-1}(T-\mathcal{I}^R\overline{s})^{-1}\in\mathcal{B}(V)
\end{equation*}
is a bounded operator. This shows, the bijectivity of $Q_s[T]$ alone already gives the boundedness of its inverse. Recall that the topology of $V$ as a Banach module over $\mathbb{R}_d$ and as a real Banach space is the same. Hence the closed graph theorem applies to right-linear and $\mathbb{R}$-linear operators both.
\end{rem}

The next proposition however states that if $Q_q[T]^{-1}\in\mathcal{B}(V)$ for one point $q\in\mathbb{R}^{d+1}$, then $Q_s[T]$ is closed for every $s\in\mathbb{R}^{d+1}$.

\begin{prop}
Let $T\in\mathcal{K}(V)$ with $\rho_S(T)\neq\emptyset$. Then
\begin{equation*}
Q_s[T]\text{ is closed for every }s\in\mathbb{R}^{d+1}.
\end{equation*}
\end{prop}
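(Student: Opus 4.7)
The plan is to use an arbitrary $q\in\rho_S(T)$ as an anchor. Since $Q_q[T]^{-1}\in\mathcal{B}(V)$, the operator $Q_q[T]$ is automatically closed, and I want to transfer this closedness to every $Q_s[T]$ by rewriting $Q_s[T]$ modulo $Q_q[T]$ via a bounded $\mathbb{R}$-linear perturbation that is tailored to the first-order factorizations from Theorem~\ref{thm_rhoS_via_Is}.

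On $\dom(T^2)$ one has $Q_s[T]=Q_q[T]+P_s$ with the first-order perturbation $P_s:=-2(s_0-q_0)T+(|s|^2-|q|^2)I$, defined a priori only on $\dom(T)\supseteq\dom(T^2)$. Theorem~\ref{thm_rhoS_via_Is} supplies both $(T-\mathcal{I}^Rq)^{-1},(T-\mathcal{I}^R\overline{q})^{-1}\in\mathcal{B}(V)$ together with the factorization $Q_q[T]^{-1}=(T-\mathcal{I}^R\overline{q})^{-1}(T-\mathcal{I}^Rq)^{-1}$. Combining this with $Tv=(T-\mathcal{I}^Rq)v+vq$ for $v\in\dom(T)$, I obtain
\[
Q_q[T]^{-1}Tv=(T-\mathcal{I}^R\overline{q})^{-1}v+Q_q[T]^{-1}(vq),\qquad v\in\dom(T),
\]
whose right-hand side defines a bounded $\mathbb{R}$-linear operator $B\in\mathcal{B}(V)$ that extends $Q_q[T]^{-1}T|_{\dom(T)}$. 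Consequently $B_s:=-2(s_0-q_0)B+(|s|^2-|q|^2)Q_q[T]^{-1}$ is bounded on $V$ and, applying $Q_q[T]^{-1}$ to both sides of $Q_s[T]v=Q_q[T]v+P_sv$, I get the key identity $Q_q[T]^{-1}Q_s[T]v=(I+B_s)v$ valid for every $v\in\dom(T^2)$.

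Now let $v_n\in\dom(T^2)$ with $v_n\to v$ and $Q_s[T]v_n\to w$. Applying the bounded $Q_q[T]^{-1}$ and using the above identity yields $(I+B_s)v_n=Q_q[T]^{-1}Q_s[T]v_n\to Q_q[T]^{-1}w$; continuity of $I+B_s$ forces the left side to converge to $(I+B_s)v$, so
\[
(I+B_s)v=Q_q[T]^{-1}w\in\dom(T^2).
\]

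The main obstacle, and the crux of the argument, is a two-step regularity bootstrap on $v$. From the explicit formula for $B_s$, $B_sv\in\dom(T)$ for every $v\in V$, so the relation $v=(v+B_sv)-B_sv$ together with $v+B_sv\in\dom(T^2)\subseteq\dom(T)$ already places $v$ in $\dom(T)$. Armed with this, the equality $Bv=Q_q[T]^{-1}(Tv)$ becomes applicable at $v$, which lifts $Bv$ — and hence $B_sv$ — into $\dom(T^2)$. Thus $v=(v+B_sv)-B_sv\in\dom(T^2)$. Since $Q_q[T]B=T$ on $\dom(T)$, and therefore $Q_q[T]B_s=P_s$ on $\dom(T)$, applying $Q_q[T]$ to $(I+B_s)v=Q_q[T]^{-1}w$ finally gives $Q_q[T]v+P_sv=w$, i.e. $Q_s[T]v=w$. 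The delicate point is exactly this bootstrap: the identity only places the combination $v+B_sv$ in $\dom(T^2)$, and one must first upgrade $v$ to $\dom(T)$ before the formula for $B$ can be used to upgrade $B_sv$ from the merely $\dom(T)$-regularity coming from its expression to the full $\dom(T^2)$-regularity needed to conclude.
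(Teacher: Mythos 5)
Your proof is correct. It shares the paper's basic strategy --- anchor at some $q\in\rho_S(T)$ and treat $Q_s[T]$ as a first-order perturbation of $Q_q[T]$ --- but the execution is genuinely different. The paper factors on the other side, writing $Q_s[T]=Q_q[T]\big(1+2(q_0-s_0)TQ_q[T]^{-1}+(|s|^2-|q|^2)Q_q[T]^{-1}\big)$, shows $TQ_q[T]^{-1}\in\mathcal{B}(V)$ by the closed graph theorem (everywhere defined and closed, since $T$ is closed and $Q_q[T]^{-1}$ is bounded), and concludes by the principle that a closed operator composed after a bounded one is closed. You instead multiply $Q_s[T]$ on the left by the bounded $Q_q[T]^{-1}$, use the first-order factorization of Theorem~\ref{thm_rhoS_via_Is} to build an explicit bounded, everywhere defined extension $B=(T-\mathcal{I}^R\overline{q})^{-1}+Q_q[T]^{-1}\mathcal{I}^Rq$ of $Q_q[T]^{-1}T$, and verify closedness by hand on sequences, with a two-step bootstrap ($v\in\dom(T)$ first, then $B_sv\in\dom(T^2)$, then $v\in\dom(T^2)$). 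Your route is longer and leans on Theorem~\ref{thm_rhoS_via_Is}, which the paper's proof does not need; in exchange it makes explicit the domain bookkeeping that the paper's one-line conclusion elides, namely that the factorization $Q_s[T]=Q_q[T](I+B')$ is an identity of operators with \emph{equal} domains only after one checks that $(I+B')$ maps $\dom(T^2)$ into $\dom(T^2)$ (which implicitly uses the commutation $TQ_q[T]^{-1}v=Q_q[T]^{-1}Tv$ on $\dom(T)$) and, conversely, that $(I+B')v\in\dom(T^2)$ forces $v\in\dom(T^2)$ --- your bootstrap is precisely this missing verification. Both proofs are valid; yours is the more self-contained of the two on the domain question, the paper's the more economical.
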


\begin{proof}
By assumption, there exists some $q\in\rho_S(T)$. With the corresponding operator $Q_q[T]$ we can rewrite the operator $Q_s[T]$, for every $s\in\mathbb{R}^{d+1}$, as
\begin{align}
Q_s[T]&=Q_q[T]+2(q_0-s_0)T+|s|^2-|q|^2 \notag \\
&=Q_q[T]\big(1+2(q_0-s_0)TQ_q[T]^{-1}+(|s|^2-|q|^2)Q_q[T]^{-1}\big). \label{Eq_Qs_closed_1}
\end{align}
Next note that $Q_q[T]^{-1}\in\mathcal{B}(V)$ due to $q\in\rho_S(T)$, and $T$ is closed, which means that the product $TQ_q[T]^{-1}$ is everywhere defined and closed. Hence $TQ_q[T]^{-1}\in\mathcal{B}(V)$ by the closed graph theorem. Altogether, this shows that the operator in the large bracket on the right hand side of \eqref{Eq_Qs_closed_1} is a bounded operator. Since also $Q_q[T]$ is closed (because $Q_q[T]^{-1}$ is closed), the representation \eqref{Eq_Qs_closed_1} gives a decomposition of $Q_s[T]$ into a closed and a bounded operator. So $Q_s[T]$ is closed as a consequence.
\end{proof}

The next proposition states that for a densely defined operator $T$ with nonempty resolvent set, also the operator $Q_s[T]$ is densely defined.

\begin{prop}
Let $T\in\mathcal{K}(V)$ with $\overline{\dom}(T)=V$ and $\rho_S(T)\neq\emptyset$. Then
\begin{equation*}
\overline{\dom}(T^2)=V.
\end{equation*}
In particular, $Q_s[T]$ is densely defined for every $s\in\mathbb{R}^{d+1}$.
\end{prop}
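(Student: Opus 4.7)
The plan is to fix a point $q\in\rho_S(T)$, which exists by hypothesis, and to pivot to the first-order operator $A:=(T-\mathcal{I}^Rq)^{-1}$ provided by Theorem~\ref{thm_rhoS_via_Is}. This $A$ is everywhere defined, bounded, and $\mathbb{R}$-linear on the underlying real Banach space $(V,\Vert\cdot\Vert)$, so the classical Hahn-Banach duality is at my disposal. The target is to prove that $\ran(A^2)$ coincides with $\dom(T^2)$ and that this range is dense in $V$, from which both conclusions then follow at once.

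First I would identify the ranges of the iterates of $A$. Since $A$ is a bijection from $V$ onto $\dom(T-\mathcal{I}^Rq)=\dom(T)$, the equality $\ran(A)=\dom(T)$ is automatic, and this set is dense by hypothesis. For the second iterate I claim $\ran(A^2)=\dom(T^2)$. To see this I would write $v=Au$ for some $u\in\dom(T)$, use the identity $(T-\mathcal{I}^Rq)v=u$ to rewrite $Tv=u+vq$, and observe that, since $\dom(T)$ is a right $\mathbb{R}_d$-subspace and $v=Au\in\dom(T)$, the right hand side lies in $\dom(T)$; hence $v\in\dom(T^2)$. The reverse inclusion runs the same computation in the opposite direction.

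The last and most delicate step is to deduce that $\ran(A^2)$ is dense in $V$. For this I would iterate the standard duality principle: on a real Banach space, a bounded operator has dense range if and only if its dual is injective. Applied to $A$, the density of $\ran(A)=\dom(T)$ yields injectivity of the dual $A^*$, so $(A^2)^*=(A^*)^2$ is again injective, whence $\ran(A^2)$ is dense by Hahn-Banach. The one subtle point to check is that $A$ is only $\mathbb{R}$-linear, not right-linear, but this is harmless: the topology of $V$ as a Banach module coincides with that of its underlying real Banach space, so the classical duality argument applies verbatim. Combining the two steps produces $\overline{\dom}(T^2)=V$, and the concluding assertion about $Q_s[T]$ is then immediate from $\dom(Q_s[T])=\dom(T^2)$ for every $s\in\mathbb{R}^{d+1}$.
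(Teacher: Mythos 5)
Your argument is correct, but it follows a genuinely different route from the paper's. The paper stays with the second-order pseudo-resolvent: for a fixed $v\in\dom(T)$ it approximates $Tv$ by a sequence $w_n\in\dom(T)$, uses the commutation $Q_q[T]^{-1}T\subseteq TQ_q[T]^{-1}$ to rewrite $v=Q_q[T]Q_q[T]^{-1}v$ as a limit of elements $Q_q[T]^{-1}(Tw_n-2q_0Tv+|q|^2v)\in\ran(Q_q[T]^{-1})=\dom(T^2)$, and then invokes density of $\dom(T)$. You instead pivot to the first-order operator $A=(T-\mathcal{I}^Rq)^{-1}$ of Theorem~\ref{thm_rhoS_via_Is}, identify $\ran(A)=\dom(T)$ and $\ran(A^2)=\dom(T^2)$ (the latter correctly uses that $\dom(T)$ is a right submodule, so $vq\in\dom(T)$), and then deduce density of $\ran(A^2)$ from density of $\ran(A)$. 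All three steps check out; your remark that $A$ is only $\mathbb{R}$-linear is indeed harmless since the module and real Banach space topologies coincide. What your route buys is a cleaner structural statement that iterates: the same argument gives $\ran(A^n)=\dom(T^n)$ dense for all $n$, whereas the paper's computation is tailored to $n=2$; what it costs is the extra bookkeeping of identifying ranges of the $\mathbb{R}$-linear first-order resolvent. One small simplification available to you: the Hahn--Banach duality step is not needed, since a bounded operator with dense range maps any dense set onto a set whose closure contains $\overline{\ran(A)}=V$, so $\ran(A^2)=A(\ran(A))$ is dense directly by continuity.
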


\begin{proof}
Let us start with $v\in\dom(T)$. Since $\overline{\dom}(T)=V$ by assumption, there exists a sequence $(w_n)_n\in\dom(T)$, such that
\begin{equation*}
\lim\limits_{n\rightarrow\infty}w_n=Tv.
\end{equation*}
Let us now choose some arbitrary $q\in\rho_S(T)$. Then $TQ_q[T]^{-1}\in\mathcal{B}(V)$, and there also converges
\begin{equation}\label{Eq_Qs_densely_defined_1}
\lim\limits_{n\rightarrow\infty}TQ_q[T]^{-1}w_n=TQ_q[T]^{-1}Tv.
\end{equation}
Since $v\in\dom(T)$, we can rewrite it as
\begin{equation*}
v=Q_q[T]Q_q[T]^{-1}v=TQ_q[T]^{-1}Tv-2q_0Q_q[T]^{-1}Tv+|q|^2Q_q[T]^{-1}v.
\end{equation*}
Plugging in the limit \eqref{Eq_Qs_densely_defined_1} for the first term on the right hand side, gives
\begin{align*}
v&=\lim\limits_{n\rightarrow\infty}TQ_q[T]^{-1}w_n-2q_0Q_q[T]^{-1}Tv+|q|^2Q_q[T]^{-1}v \\
&=\lim\limits_{n\rightarrow\infty}Q_q[T]^{-1}(Tw_n-2q_0Tv+|q|^2v).
\end{align*}
This proves that $v\in\overline{\ran}(Q_q[T]^{-1})=\overline{\dom}(T^2)$, and since $v\in\dom(T)$ was arbitrary, we get
\begin{equation*}
\dom(T)\subseteq\overline{\dom}(T^2).
\end{equation*}
However, since $\overline{\dom}(T)=V$ is dense, this inclusion shows that also $\overline{\dom}(T^2)=V$.
\end{proof}

\section{The adjoint of a Clifford operator}

In this section let $V$ be a Hilbert module over the Clifford algebra $\mathbb{R}_d$. We will introduce the adjoint operator $T^*$ of a densely defined unbounded operator $T:\dom(T)\subseteq V\rightarrow V$, and collect some of its main properties. In particular, in Theorem~\ref{thm_Spectrum_Tstar} we prove that the $S$-spectra of $T^*$ and $T$ coincide. For further results see also \cite{ColKim}.

\begin{defi}\label{defi_Adjoint_operator}
Let $T:\dom(T)\subseteq V\rightarrow V$ be a densely defined $\mathbb{R}$-linear operator. Then we define the \textit{adjoint operator} $T^*:\dom(T^*) \subset V\rightarrow V$ as the unique operator with the property
\begin{equation}\label{Eq_Adjoint_operator}
\Sc\langle T^*w,v\rangle=\Sc\langle w,Tv\rangle,\qquad v\in\dom(T),w\in\dom(T^*),
\end{equation}
and with the domain
\begin{equation}\label{Eq_Adjoint_operator_domain}
\dom(T^*):=\big\{w\in V\;\big|\;\exists v_w\in V: \Sc\langle v_w,v\rangle=\Sc\langle w,Tv\rangle,\,v\in\dom(T)\big\}.
\end{equation}
\end{defi}

Then $T^*$ is again a closed, $\mathbb{R}$-linear operator. The next lemma proves that for right-linear operators $T$, the adjoint operator is again right-linear, and there exists a characterization of the action \eqref{Eq_Adjoint_operator} and the domain \eqref{Eq_Adjoint_operator_domain} in terms of the inner product $\langle\cdot,\cdot\rangle$ instead of its scalar part $\Sc\langle\cdot,\cdot\rangle$, see also Remark~\ref{rem_Two_scalar_products}.

\begin{lem}\label{lem_Right_linear_adjoint}
Let $T:\dom(T)\subseteq V\rightarrow V$ be a densely defined right-linear operator. Then the operator $T^*$ from Definition~\ref{defi_Adjoint_operator} is right-linear, its domain can be characterized by
\begin{equation}\label{Eq_Right_linear_adjoint_domain}
\dom(T^*)=\big\{w\in V\;\big|\;\exists v_w\in V: \langle v_w,v\rangle=\langle w,Tv\rangle,\,v\in\dom(T)\big\},
\end{equation}
and it satisfies the identity
\begin{equation}\label{Eq_Right_linear_adjoint}
\langle T^*w,v\rangle=\langle w,Tv\rangle,\qquad v\in\dom(T),\,w\in\dom(T^*).
\end{equation}
\end{lem}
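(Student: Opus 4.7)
The plan is to first prove the inner-product identity \eqref{Eq_Right_linear_adjoint}, then read off the domain characterization \eqref{Eq_Right_linear_adjoint_domain} as an immediate corollary, and finally deduce right-linearity using the property $\langle vs,w\rangle=\overline{s}\langle v,w\rangle$ from \eqref{Eq_Inner_product_linearity}.

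The key algebraic input is a "polarization" trick: for every Clifford number $c=\sum_{A\in\mathcal{A}}c_Ae_A\in\mathbb{R}_d$, the basis of $\mathbb{R}_d$ is orthonormal with respect to the bilinear form $(\alpha,\beta)\mapsto\Sc(\overline{\alpha}\beta)$, i.e. $\Sc(e_A\overline{e_B})=\delta_{AB}$. Consequently one has the reconstruction formula
\begin{equation*}
c=\sum\nolimits_{A\in\mathcal{A}}\Sc(c\,\overline{e_A})\,e_A,\qquad c\in\mathbb{R}_d.
\end{equation*}
Because $T$ is right-linear with right-linear domain, for every $v\in\dom(T)$ and every $A\in\mathcal{A}$, also $v\overline{e_A}\in\dom(T)$ with $T(v\overline{e_A})=(Tv)\overline{e_A}$. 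Using \eqref{Eq_Inner_product_linearity} this gives
\begin{equation*}
\langle w,T(v\overline{e_A})\rangle=\langle w,Tv\rangle\overline{e_A},\qquad \langle v_w,v\overline{e_A}\rangle=\langle v_w,v\rangle\overline{e_A}.
\end{equation*}

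To prove \eqref{Eq_Right_linear_adjoint}, take $w\in\dom(T^*)$, set $v_w:=T^*w$, and apply Definition~\ref{defi_Adjoint_operator} with the test vector $v\overline{e_A}$ in place of $v$. This yields $\Sc(\langle T^*w,v\rangle\overline{e_A})=\Sc(\langle w,Tv\rangle\overline{e_A})$ for every $A\in\mathcal{A}$. Summing against $e_A$ and using the reconstruction formula above proves $\langle T^*w,v\rangle=\langle w,Tv\rangle$. The domain identity \eqref{Eq_Right_linear_adjoint_domain} then follows from set-theoretic inclusions in both directions: the $\subseteq$ direction is exactly what we just proved (with $v_w=T^*w$), while $\supseteq$ is immediate since any $w$ admitting a $v_w$ with the full-inner-product identity also satisfies the scalar-part identity of \eqref{Eq_Adjoint_operator_domain}, hence lies in $\dom(T^*)$.

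For right-linearity, let $w\in\dom(T^*)$ and $s\in\mathbb{R}_d$. Using \eqref{Eq_Inner_product_linearity} once on each side,
\begin{equation*}
\langle ws,Tv\rangle=\overline{s}\langle w,Tv\rangle=\overline{s}\langle T^*w,v\rangle=\langle (T^*w)s,v\rangle,\qquad v\in\dom(T),
\end{equation*}
so by the characterization \eqref{Eq_Right_linear_adjoint_domain} just established, $ws\in\dom(T^*)$ with $T^*(ws)=(T^*w)s$. Additivity is analogous and uses only that $\langle\cdot,\cdot\rangle$ is additive in each argument. The only nontrivial step is the polarization identity; once it is in place, everything reduces to straightforward manipulations with \eqref{Eq_Inner_product_linearity}.
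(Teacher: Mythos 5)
Your proposal is correct and follows essentially the same route as the paper's proof: the reconstruction formula $c=\sum_{A}\Sc(c\,\overline{e_A})e_A$ applied to the test vectors $v\overline{e_A}\in\dom(T)$ yields \eqref{Eq_Right_linear_adjoint}, the domain identity follows by the same two inclusions, and right-linearity is the same computation with $\langle vs,w\rangle=\overline{s}\langle v,w\rangle$ (the paper merely routes it through the conjugation property \eqref{Eq_Inner_product_property}). No gaps.
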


\begin{proof}
For the inclusion $\text{\grqq}\subseteq\text{\grqq}$ in \eqref{Eq_Right_linear_adjoint_domain}, let $w\in\dom(T^*)$. Note that any component $s_A$ of a Clifford number $s=\sum_{A\in\mathcal{A}}s_Ae_A$ can be written as $s_A=\Sc(s\,\overline{e_A})$, see e.g. \cite[Eq.(2.2)]{CMS24}. This allows us to write for every $v\in\dom(T)$
\begin{align}
\langle T^*w,v\rangle&=\sum\limits_{A\in\mathcal{A}}\Sc\big(\langle T^*w,v\rangle\overline{e_A}\big)e_A=\sum\limits_{A\in\mathcal{A}}\Sc\langle T^*w,v\,\overline{e_A}\rangle e_A \notag \\
&=\sum\limits_{A\in\mathcal{A}}\Sc\langle w,Tv\,\overline{e_A}\rangle e_A=\sum\limits_{A\in\mathcal{A}}\Sc\big(\langle w,Tv\rangle\overline{e_A}\big)e_A=\langle w,Tv\rangle. \label{Eq_Right_linear_adjoint_1}
\end{align}
The identity \eqref{Eq_Right_linear_adjoint_1} now on the one hand proves that $w$ is contained in the right hand side of \eqref{Eq_Right_linear_adjoint_domain} and also proves the identity \eqref{Eq_Right_linear_adjoint}. \medskip

For the inclusion $\text{\grqq}\supseteq\text{\grqq}$ in \eqref{Eq_Right_linear_adjoint_domain}, let $w\in V$ such that there exists some $v_w\in V$ with
\begin{equation*}
\langle v_w,v\rangle=\langle w,Tv\rangle,\qquad v\in\dom(T).
\end{equation*}
Then there clearly also is
\begin{equation*}
\Sc\langle v_w,v\rangle=\Sc\langle w,Tv\rangle,\qquad v\in\dom(T),
\end{equation*}
and we have proven that $w\in\dom(T^*)$, as defined in \eqref{Eq_Adjoint_operator_domain}. \medskip

Finally, we want to verify that $T^*$ is right-linear. With the identity \eqref{Eq_Right_linear_adjoint}, the linearity \eqref{Eq_Inner_product_linearity} and the conjugation property \eqref{Eq_Inner_product_property} of the inner product, there follows for every $s\in\mathbb{R}_d$
\begin{equation*}
\langle T^*(w)s,v\rangle=\overline{s}\langle T^*w,v\rangle=\overline{s}\langle w,Tv\rangle=\overline{\langle Tv,w\rangle s}=\overline{\langle Tv,ws\rangle}=\langle ws,Tv\rangle,\qquad v\in\dom(T).
\end{equation*}
Hence there is $ws\in\dom(T^*)$ and $T^*(ws)=T^*(w)s$.
\end{proof}

The condition that the adjoint operator in Definition~\ref{defi_Adjoint_operator} is only defined for operators $T$ with dense domain, is crucial in the sense that otherwise the element $v_w$ in \eqref{Eq_Adjoint_operator_domain} is not unique, and $T^*w$ not well defined. However, especially in Section~\ref{sec_Hinfty_FC}, this assumption will often be not satisfied for the $H^\infty$-functional calculus. To circumvent this problem, we need a generalization of the concept of operators.

\begin{defi}[Multivalued operators]\label{defi_Multivalued_operators}
A right-linear subspace $T\subseteq V\times V$ of the product space, is called a \textit{multivalued operator}. A right-linear operator $T:\dom(T)\subseteq V\rightarrow V$ is considered as a multivalued operator in the sense that $T$ is identified with $\graph(T)$.
\end{defi}

Let now $T,S\subseteq V\times V$ be two right-linear multivalued operators, and $s\in\mathbb{R}_d$. Then the sum and product of multivalued operators is defined as
\begin{align*}
T+S:=&\big\{(v,w_1+w_2)\;\big|\;(v,w_1)\in T,\,(v,w_2)\in S\big\}, \\
sT:=&\big\{(v,sw)\;\big|\;(v,w)\in T\big\}, \\
T\circ S:=&\big\{(v,u)\in V\times V\;\big|\;\exists w\in V\text{ such that }(v,w)\in S\text{ and }(w,u)\in T\big\}.
\end{align*}
The great advantage of treating multivalued operators is that for every right-linear multivalued operator $T\subseteq V\times V$ there exists the corresponding closure
\begin{equation}\label{Eq_Closure_multivalued}
\overline{T}:=\big\{(v,w)\in V\times V\;\big|\;\exists(v_n,w_n)_n\in T\text{ such that }(v,w)=\lim_{n\rightarrow\infty}(v_n,w_n)\big\},
\end{equation}
as well as the corresponding adjoint multivalued operator
\begin{equation}\label{Eq_Adjoint_multivalued}
T^*:=\big\{(w,v)\in V\times V\;\big|\;\langle v,\widetilde{v}\rangle=\langle w,\widetilde{w}\rangle,\text{ for all }(\widetilde{v},\widetilde{w})\in T\big\}.
\end{equation}
Note, that in the case that $T$ is a densely defined right-linear operator, the definition \eqref{Eq_Adjoint_multivalued} of the adjoint multivalued operator coincides with the definition \eqref{Eq_Adjoint_operator} of the classical adjoint operator, if one identifies $T$ and $T^*$ with their respective graphs of course. \medskip

The next lemma states some more basic properties of the adjoint operator. The proofs follow the same arguments as in the complex case.

\begin{lem}\label{lem_Properties_Adjoint}
Let $T,S\subseteq V\times V$ be right-linear multivalued operators. Then there holds the following properties:

\begin{enumerate}
\item[i)] If $T$ is closable, then $\overline{\dom}(T^*)=V$, and $\overline{T}=(T^*)^*$ and $(\overline{T})^*=T^*$; \medskip
\item[ii)] If $T\in\mathcal{B}(V)$, then also $T^*\in\mathcal{B}(V)$ with $\Vert T^*\Vert=\Vert T\Vert$; \medskip
\item[iii)] If $T\subseteq S$, then $S^*\subseteq T^*$; \medskip
\item[iv)] \begin{tabbing} $(ST)^*\supseteq T^*S^*$, \=and if $S\in\mathcal{B}(V)$ then even $(ST)^*=T^*S^*$, \\
\>and if $S\in\mathcal{K}(V)$, $T\in\mathcal{B}(V)$ then $(ST)^*=\overline{T^*S^*}$; \end{tabbing}
\item[v)] $(T+S)^*\supseteq T^*+S^*$, and if either $T\in\mathcal{B}(V)$ or $S\in\mathcal{B}(V)$ then $(T+S)^*=T^*+S^*$; \medskip
\item[vi)] For closed operators $T\in\mathcal{K}(V)$, we have that $T$ bijective if and only if $T^*$ is bijective. In this case there is $(T^*)^{-1}=(T^{-1})^*$.
\end{enumerate}
\end{lem}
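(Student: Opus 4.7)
The plan is to reduce everything to graph manipulations in the product Hilbert module $V\times V$, adapting the classical von Neumann approach to the Clifford setting. Equip $V\times V$ with the inner product
\begin{equation*}
\langle(v_1,w_1),(v_2,w_2)\rangle_{V\times V}:=\langle v_1,v_2\rangle+\langle w_1,w_2\rangle,
\end{equation*}
and introduce the \emph{flip} $J(v,w):=(-w,v)$, which is a right-linear isometry satisfying $J^2=-\mathrm{id}$ and $J(M)^\perp=J(M^\perp)$ for every right-linear subspace $M$. A direct rewriting of \eqref{Eq_Adjoint_multivalued} shows
\begin{equation*}
T^*=J(T^\perp),
\end{equation*}
where $\perp$ denotes the orthogonal complement with respect to $\Sc\langle\cdot,\cdot\rangle_{V\times V}$; that the full Clifford inner product in \eqref{Eq_Adjoint_multivalued} can be replaced by its scalar part on right-linear subspaces is the identity $s_A=\Sc(s\,\overline{e_A})$ already used in the proof of Lemma~\ref{lem_Right_linear_adjoint}. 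From here the standard orthogonal-complement calculus $(M^\perp)^\perp=\overline{M}$ does most of the work.

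Items (i) and (iii) are then essentially free. Computing $(T^*)^*=J((J(T^\perp))^\perp)=J^2(T^{\perp\perp})=\overline{T}$ (as subspaces, since $-M=M$) proves the second identity in (i); the third, $(\overline{T})^*=T^*$, follows at once from $\overline{T}^\perp=T^\perp$; and a $w\in\dom(T^*)^\perp$ would put $(0,w)$ into $\overline{T}$, incompatible with being a graph unless $w=0$, which gives the density of $\dom(T^*)$. The containment (iii) is immediate from the perp pairing. For (ii), the boundedness of $T\in\mathcal{B}(V)$ together with \eqref{Eq_Right_linear_adjoint} and Riesz' representation theorem forces $\dom(T^*)=V$, and the real-part Cauchy--Schwarz inequality yields
\begin{equation*}
\Vert T^*w\Vert^2=\Sc\langle w,TT^*w\rangle\leq\Vert w\Vert\,\Vert T\Vert\,\Vert T^*w\Vert,
\end{equation*}
hence $\Vert T^*\Vert\leq\Vert T\Vert$; the reverse inequality follows by applying the same argument to $T^*$ and invoking $(T^*)^*=T$ from (i).

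Items (iv) and (v) unfold directly from the defining condition in \eqref{Eq_Adjoint_multivalued} read in the multivalued sense: the inclusions $T^*S^*\subseteq(ST)^*$ and $T^*+S^*\subseteq(T+S)^*$ are immediate, and the equalities under the boundedness hypotheses use that a bounded operator's adjoint is everywhere defined, so no element of $(ST)^*$ (resp.\ $(T+S)^*$) can escape the composition (resp.\ sum) on the right. The closure version $(ST)^*=\overline{T^*S^*}$ in the case $S\in\mathcal{K}(V)$, $T\in\mathcal{B}(V)$ is obtained by combining the inclusion already proved with $(T^*S^*)^*\supseteq S^{**}T^{**}=ST$ and then passing to closures via (i). For (vi), the closed graph theorem (as invoked in the remark preceding the lemma) gives $T^{-1}\in\mathcal{B}(V)$, and applying (iv) with the bounded factor $T^{-1}$ to the factorizations $T^{-1}T=\mathrm{id}_{\dom(T)}$ and $TT^{-1}=\mathrm{id}_V$ yields $T^*(T^{-1})^*=\mathrm{id}_V$ and $(T^{-1})^*T^*\subseteq\mathrm{id}_V$; these together force $T^*$ to be bijective with $(T^*)^{-1}=(T^{-1})^*$.

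The main technical obstacle is the domain bookkeeping in (iv)--(v): the reverse inclusions genuinely fail without the boundedness hypotheses, and the closure statement in (iv) requires a somewhat delicate double application of (i). The specifically Cliffordian subtlety lies at the very beginning, in justifying that the orthogonality appearing in \eqref{Eq_Adjoint_multivalued}, phrased with the full Clifford-valued inner product, agrees on right-linear subspaces with the topological orthogonality with respect to $\Sc\langle\cdot,\cdot\rangle_{V\times V}$; once this is in place the remainder of the argument proceeds entirely in parallel to the complex Hilbert space case.
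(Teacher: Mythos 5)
The paper does not actually write out a proof of Lemma~\ref{lem_Properties_Adjoint}: it only remarks that the proofs follow the same arguments as in the complex case and points to \cite[Section 1]{BHS20}. Your von Neumann graph calculus --- $T^*=J(T^\perp)$ with the flip $J(v,w)=(-w,v)$, together with $(M^\perp)^\perp=\overline{M}$ --- is precisely the standard complex argument the authors allude to, and your opening observation that on right-linear subspaces orthogonality with respect to the full Clifford inner product coincides with orthogonality with respect to $\Sc\langle\cdot,\cdot\rangle_{V\times V}$ (via $s_A=\Sc(s\,\overline{e_A})$, as in the proof of Lemma~\ref{lem_Right_linear_adjoint}) is exactly the point that legitimizes the transfer to the Clifford setting. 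Items i), ii), iii), v), vi) and the first two claims of iv) are correct as you argue them.

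There is, however, one directional slip in the closure statement of iv). From $T^*S^*\subseteq(ST)^*$ and $(T^*S^*)^*\supseteq S^{**}T^{**}=ST$ you can only extract, after applying iii) and the biadjoint identity, the inclusion $\overline{T^*S^*}\subseteq(ST)^*$: both of your ingredients point the same way, and the nontrivial inclusion $(ST)^*\subseteq\overline{T^*S^*}$ is never reached. The repair is short and uses only what you already have. Since $T\in\mathcal{B}(V)$, item ii) gives $T^*\in\mathcal{B}(V)$, so the equality case of iv) (bounded outer factor) applies to the product $T^*S^*$ and yields $(T^*S^*)^*=(S^*)^*(T^*)^*=\overline{S}\,\overline{T}=ST$ exactly, the last step because $S$ is closed and $T$ is bounded. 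Taking adjoints once more then gives $(ST)^*=(T^*S^*)^{**}=\overline{T^*S^*}$; note that here you need the identity $A^{**}=\overline{A}$ for an arbitrary right-linear subspace $A$ (not just for closable operators, since $T^*S^*$ may have a nontrivial multivalued part), but this is exactly what your graph calculus $A^*=J(A^\perp)$ delivers. With this correction the proof is complete.
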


A more detailed introduction into multivalued operators
can be found in \cite[Section 1]{BHS20}. \medskip

Let us now come back to the $S$-spectrum, and start by proving that the one of $T$ and $T^*$ coincide. The following theorem is the $S$-spectrum analog of the well known connection between spectrum of an operator and its adjoint.

\begin{thm}\label{thm_Spectrum_Tstar}
Let $T\in\mathcal{K}(V)$ be densely defined. Then there is
\begin{equation}\label{Eq_Spectrum_Tstar}
\sigma_S(T^*)=\sigma_S(T).
\end{equation}
Moreover, for every $s\in\rho_S(T^*)$, the $S$-resolvent operators of $T^*$ are given by
\begin{equation}\label{Eq_Resolvents_adjoint}
S_L^{-1}(s,T^*)=S_R^{-1}(\overline{s},T)^*\qquad\text{and}\qquad S_R^{-1}(s,T^*)=S_L^{-1}(\overline{s},T)^*.
\end{equation}
\end{thm}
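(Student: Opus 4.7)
The strategy is to leverage the first-order characterization of the $S$-spectrum from Theorem~\ref{thm_rhoS_via_Is}, reducing the whole question to the adjoint behaviour of the $\mathbb{R}$-linear operator $T-\mathcal{I}^Rs$. The crucial preliminary identity is $(\mathcal{I}^Rs)^*=\mathcal{I}^R\overline{s}$: using Definition~\ref{defi_Adjoint_operator} together with $\langle w,vs\rangle=\langle w,v\rangle s$ from \eqref{Eq_Inner_product_linearity} and the trace property $\Sc(ab)=\Sc(ba)$ of $\mathbb{R}_d$, one computes $\Sc\langle w,\mathcal{I}^Rs(v)\rangle=\Sc(\langle w,v\rangle s)=\Sc(s\langle w,v\rangle)=\Sc\langle\mathcal{I}^R\overline{s}(w),v\rangle$. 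Combining with part v) of Lemma~\ref{lem_Properties_Adjoint} (applied in the $\mathbb{R}$-linear setting, which is legitimate because $\mathcal{I}^Rs$ is bounded) yields $(T-\mathcal{I}^Rs)^*=T^*-\mathcal{I}^R\overline{s}$.

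With this in hand, \eqref{Eq_Spectrum_Tstar} follows quickly: by Theorem~\ref{thm_rhoS_via_Is}, $s\in\rho_S(T)$ iff $T-\mathcal{I}^Rs$ is bijective; part vi) of Lemma~\ref{lem_Properties_Adjoint} transfers bijectivity to the adjoint $T^*-\mathcal{I}^R\overline{s}$; applying Theorem~\ref{thm_rhoS_via_Is} to $T^*$ then gives $\overline{s}\in\rho_S(T^*)$. Because $Q_s[T^*]$ depends on $s$ only through $s_0$ and $|s|^2$, the $S$-resolvent set is invariant under conjugation, so $\overline{s}\in\rho_S(T^*)\Longleftrightarrow s\in\rho_S(T^*)$, completing the spectral equality.

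For the resolvent identities \eqref{Eq_Resolvents_adjoint}, I would first take adjoints of the factorization \eqref{Eq_Resolvent_via_Is} using parts iv) and vi) of Lemma~\ref{lem_Properties_Adjoint}, obtaining $(Q_s[T]^{-1})^*=(T^*-\mathcal{I}^R\overline{s})^{-1}(T^*-\mathcal{I}^Rs)^{-1}=Q_s[T^*]^{-1}$. Then, writing $S_R^{-1}(\overline{s},T)=(s-T)Q_s[T]^{-1}$ (using $Q_{\overline{s}}=Q_s$) as a composition of the closed right-linear operator $s-T$ (with $s$ denoting left multiplication by $s$, whose adjoint equals left multiplication by $\overline{s}$ by an analogous scalar-part computation) with the bounded operator $Q_s[T]^{-1}$, part iv) of Lemma~\ref{lem_Properties_Adjoint} gives $S_R^{-1}(\overline{s},T)^*=Q_s[T^*]^{-1}(\overline{s}-T^*)$ on $\dom(T^*)$. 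Using the commutation $Q_s[T^*]^{-1}T^*=T^*Q_s[T^*]^{-1}$ on $\dom(T^*)$ (valid because $Q_s[T^*]$ is a polynomial in $T^*$ with real coefficients), this coincides with $S_L^{-1}(s,T^*)v=Q_s[T^*]^{-1}(\overline{s}v)-T^*Q_s[T^*]^{-1}(v)$; boundedness of both sides together with density of $\dom(T^*)$ extend the identity to all of $V$. The second formula follows either by symmetry or by applying the first to $T^*$ and invoking $T^{**}=T$ from Lemma~\ref{lem_Properties_Adjoint} i).

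The hardest technical step is justifying the commutation $Q_s[T^*]^{-1}T^*=T^*Q_s[T^*]^{-1}$ on $\dom(T^*)$: the polynomial identity $Q_s[T^*]T^*=T^*Q_s[T^*]$ is a priori only available on $\dom(T^{*3})$, so one needs the boundedness of $T^*Q_s[T^*]^{-1}$ (established exactly as in the preceding proposition via the closed graph theorem) together with a density/closedness argument to push the commutation from a core for $T^*$ out to all of $\dom(T^*)$.
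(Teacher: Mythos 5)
Your proof is correct, and for the spectral equality \eqref{Eq_Spectrum_Tstar} it is essentially the paper's argument: both pass through Theorem~\ref{thm_rhoS_via_Is}, the identity $(T-\mathcal{I}^Rs)^*=T^*-\mathcal{I}^R\overline{s}$, Lemma~\ref{lem_Properties_Adjoint}~vi), and the invariance of $\rho_S(T^*)$ under $s\mapsto\overline{s}$ (the paper invokes axial symmetry, you invoke $Q_{\overline{s}}[T^*]=Q_s[T^*]$; same observation). For the resolvent identities \eqref{Eq_Resolvents_adjoint} you deviate in two worthwhile ways. First, you obtain $(Q_s[T]^{-1})^*=Q_s[T^*]^{-1}$ by taking adjoints in the first-order factorization \eqref{Eq_Resolvent_via_Is}, which stays entirely within $\mathcal{B}(V)$ and only needs $(AB)^*=B^*A^*$ for bounded operators; the paper instead shows $Q_s[T]^*\supseteq Q_s[T^*]$ via Lemma~\ref{lem_Properties_Adjoint}~iv)--v) and upgrades to equality using surjectivity of $Q_s[T^*]$. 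Your route is arguably cleaner, at the price of working with adjoints of merely $\mathbb{R}$-linear operators (harmless here, since the real scalar product $\Sc\langle\cdot,\cdot\rangle$ governs both notions and the final object is right-linear). Second, you take the adjoint of $S_R^{-1}(\overline{s},T)=(s-T)Q_s[T]^{-1}$ as a single product and then commute $T^*$ past $Q_s[T^*]^{-1}$, whereas the paper splits off $TQ_s[T]^{-1}$ and proves $(TQ_s[T]^{-1})^*=T^*Q_s[T^*]^{-1}$ by a closure-and-density argument. The technical content is identical: the paper's step $\lim_nT^*Q_s[T^*]^{-1}v_n=\lim_nQ_s[T^*]^{-1}T^*v_n$ uses precisely the commutation on $\dom(T^*)$ that you single out as the hardest point. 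Your concern there is legitimate but resolves more easily than you suggest, with no core argument needed: for $v\in\dom(T^*)$ set $u:=Q_s[T^*]^{-1}v\in\dom(T^{*2})$; then $T^{*2}u=v+2s_0T^*u-|s|^2u\in\dom(T^*)$, so $u\in\dom(T^{*3})$, the polynomial commutation $T^*Q_s[T^*]u=Q_s[T^*]T^*u$ applies, and inverting gives $Q_s[T^*]^{-1}T^*v=T^*Q_s[T^*]^{-1}v$. With that observation your argument is complete; there is no gap.
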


\begin{proof}
Instead of \eqref{Eq_Spectrum_Tstar}, it is equivalent to prove $\rho_S(T^*)=\rho_S(T)$. For the first inclusion $\text{\grqq}\supseteq\text{\grqq}$, let $s\in\rho_S(T)$. From Theorem~\ref{thm_rhoS_via_Is} we know that $(T-\mathcal{I}^Rs)^{-1}\in\mathcal{B}(V)$. Since $\mathcal{I}^Rs$ is a bounded operator with adjoint given by $(\mathcal{I}^Rs)^*=\mathcal{I}^R\overline{s}$, we have $(T-\mathcal{I}^Rs)^*=T^*-\mathcal{I}^R\overline{s}$. From Lemma~\ref{lem_Properties_Adjoint} vi) we know that $(T-\mathcal{I}^Rs)^*$ is bijective, and hence there is also $T^*-\mathcal{I}^R\overline{s}$ bijective. It then follows again from Theorem~\ref{thm_rhoS_via_Is} that $\overline{s}\in\rho_S(T^*)$. Finally, since $\rho_S(T^*)$ is axially symmetric, we also get $s\in\rho_S(T^*)$. \medskip

The inverse inclusion $\text{\grqq}\subseteq\text{\grqq}$ follows from the first one applied to the operator $T^*$, and using that $(T^*)^*=T$, see Lemma~\ref{lem_Properties_Adjoint}~i). \medskip

Finally, we want to verify the relations \eqref{Eq_Resolvents_adjoint} of the $S$-resolvent operators. It follows immediately from Lemma~\ref{lem_Properties_Adjoint}~iv) and v) that $Q_s[T]^*\supseteq Q_s[T^*]$. However, since $s\in\rho_S(T^*)$, there is $\ran(Q_s[T^*])=V$, and $Q_s[T]^*=Q_s[T^*]$ is satisfied with equality. Consequently, there is also
\begin{equation*}
Q_s[T^*]^{-1}=(Q_S[T]^*)^{-1}=(Q_s[T]^{-1})^*
\end{equation*}
Moreover, for the operator $TQ_s[T]^{-1}$ this also implies
\begin{equation*}
(TQ_s[T]^{-1})^*\supseteq(Q_s[T]^{-1})^*T^*=Q_S[T^*]^{-1}T^*.
\end{equation*}
Since the left hand side is a closed operator, there also is
\begin{equation}\label{Eq_Resolvent_set_adjoint_4}
(TQ_s[T]^{-1})^*\supseteq\overline{Q_s[T^*]^{-1}T^*}.
\end{equation}
In order to show that the right hand side is everywhere defined, let $v\in V$. Since $T^*$ is densely defined by Lemma~\ref{lem_Properties_Adjoint}~i), there exists a sequence $(v_n)_n\in\dom(T^*)$ with $v=\lim_{n\rightarrow\infty}v_n$. Since moreover $T^*Q_s[T^*]^{-1}\in\mathcal{B}(V)$ there also converges
\begin{equation*}
T^*Q_s[T^*]^{-1}v=\lim\limits_{n\rightarrow\infty}T^*Q_s[T^*]^{-1}v_n=\lim\limits_{n\rightarrow\infty}Q_s[T^*]^{-1}T^*v_n.
\end{equation*}
Hence there is $v\in\dom(\overline{Q_s[T^*]^{-1}T^*})$ and
\begin{equation*}
\overline{Q_s[T^*]^{-1}T^*}v=T^*Q_s[T^*]^{-1}v.
\end{equation*}
This in particular means that the closure $\overline{Q_s[T^*]^{-1}T^*}$ is everywhere defined, and the inclusion \eqref{Eq_Resolvent_set_adjoint_4} turns into the equality
\begin{equation*}
(TQ_s[T]^{-1})^*=T^*Q_s[T^*]^{-1}.
\end{equation*}
With this identity, we then conclude the forms \eqref{Eq_Resolvents_adjoint} of the $S$-resolvent operators, namely
\begin{align*}
S_R^{-1}(\overline{s},T)^*&=\big((s-T)Q_s[T]^{-1}\big)^*=\big(sQ_s[T]^{-1}\big)^*-\big(TQ_s[T]^{-1}\big)^* \\
&=Q_s[T^*]^{-1}\overline{s}-T^*Q_s[T^*]^{-1}=S_L^{-1}(s,T^*),
\end{align*}
as well as
\begin{align*}
S_L^{-1}(\overline{s},T)^*&=\big(Q_s[T]^{-1}s-TQ_s[T]^{-1}\big)^*=\big(Q_s[T]^{-1}s\big)^*-\big(TQ_s[T]^{-1}\big)^* \\
&=\overline{s}Q_s[T^*]^{-1}-T^*Q_s[T^*]^{-1}=S_R^{-1}(s,T^*). \qedhere
\end{align*}
\end{proof}

We notice that some of the results of Theorem~\ref{thm_Spectrum_Tstar} have been explored in the quaternionic setting by other authors. For more details, see the various comments in \cite[Section 9.2]{CGK}, the recent paper \cite{Diogo} and the references therein.

\section{The $S$-functional calculus of the adjoint operator}\label{sec_SFC}

In this section let $V$ be a Hilbert module over the Clifford algebra $\mathbb{R}_d$. Since Theorem~\ref{thm_Spectrum_Tstar} proves that the $S$-spectra of $T$ and $T^*$ coincide, it is reasonable to also find a connection between the various $S$-functional calculi of $T$ and $T^*$. We will do this for the bounded $S$-functional calculus in Theorem~\ref{thm_Bounded_Tstar}, for the unbounded $S$-functional calculus in Theorem~\ref{thm_Unbounded_Tstar}, for the $\omega$-functional calculus in Theorem~\ref{thm_omega_SFC_Tstar} and for the $H^\infty$-functional calculus in Theorem~\ref{thm_Hinfty_Tstar}. To do so, we introduce for functions $f$, the corresponding function $f^\#$ as follows.

\begin{defi}
Let $U\subseteq\mathbb{R}^{d+1}$ be open and axially symmetric. Then for every $f:U\rightarrow\mathbb{R}_d$, we define the function
\begin{equation}\label{Eq_fsharp}
f^\#(s):=\overline{f(\overline{s})},\qquad s\in U.
\end{equation}
\end{defi}

The function $f^\#$ now admits the following basic holomorphicity properties.

\begin{lem}\label{lem_Properties_fsharp}
Let $U\subseteq\mathbb{R}^{d+1}$ be open and axially symmetric.

\begin{enumerate}
\item[i)] For every $f\in\mathcal{SH}_L(U)$, there is $f^\#\in\mathcal{SH}_R(U)$; \medskip
\item[ii)] For every $f\in\mathcal{SH}_R(U)$, there is $f^\#\in\mathcal{SH}_L(U)$; \medskip
\item[iii)] For every $f\in\mathcal{N}(U)$, there is $f^\#=f$.
\end{enumerate}
\end{lem}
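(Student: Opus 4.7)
The plan is to unwind the definition $f^\#(x+Jy)=\overline{f(x-Jy)}$ using the slice representation of $f$, conjugate termwise, and read off the slice representation of $f^\#$, together with the required symmetry and Cauchy--Riemann conditions. The key algebraic facts I will rely on are $\overline{st}=\overline{t}\,\overline{s}$ for $s,t\in\mathbb{R}_d$ and $\overline{J}=-J$ for every $J\in\mathbb{S}$.

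For part i), fix $(x,y)\in\mathcal{U}$ and $J\in\mathbb{S}$. Since $-J\in\mathbb{S}$, the left slice representation of $f$ yields
\begin{equation*}
f(x-Jy)=f(x+(-J)y)=f_0(x,y)+(-J)f_1(x,y)=f_0(x,y)-Jf_1(x,y).
\end{equation*}
Conjugating and using $\overline{Jf_1(x,y)}=\overline{f_1(x,y)}\,\overline{J}=-\overline{f_1(x,y)}J$, I obtain
\begin{equation*}
f^\#(x+Jy)=\overline{f_0(x,y)}+\overline{f_1(x,y)}J.
\end{equation*}
Thus the natural candidates for the right slice components are $f_0^\#(x,y):=\overline{f_0(x,y)}$ and $f_1^\#(x,y):=\overline{f_1(x,y)}$. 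The compatibility conditions \eqref{Eq_Symmetry_condition} transfer immediately because conjugation is $\mathbb{R}$-linear, and the Cauchy--Riemann equations \eqref{Eq_Cauchy_Riemann_equations} likewise survive because $\partial_x$ and $\partial_y$ commute with componentwise conjugation. This gives $f^\#\in\mathcal{SH}_R(U)$.

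Part ii) is entirely analogous: starting from a right slice representation $f(x+Jy)=f_0(x,y)+f_1(x,y)J$ and running the same computation moves the factor $J$ back to the left, so $f^\#(x+Jy)=\overline{f_0(x,y)}+J\overline{f_1(x,y)}$ is a left slice representation. For part iii), if $f$ is intrinsic, both $f_0$ and $f_1$ are real valued, so conjugation acts trivially on them and $J$ commutes with $f_1(x,y)$; in particular $f_0^\#=f_0$, $f_1^\#=f_1$, and the left and right representations coincide, giving $f^\#=f$.

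There is no real obstacle here; the only point requiring care is the bookkeeping of the conjugation rule on products of Clifford numbers, which is what forces $J$ to switch sides and thus swaps $\mathcal{SH}_L$ with $\mathcal{SH}_R$. A minor stylistic choice is whether to invoke the representation of $f$ at the imaginary unit $-J$ or to apply the compatibility condition $f_1(x,-y)=-f_1(x,y)$ together with $f(x-Jy)=f_0(x,-y)+Jf_1(x,-y)$; both routes produce the same expression $f_0(x,y)-Jf_1(x,y)$.
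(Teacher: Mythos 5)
Your proposal is correct and follows essentially the same route as the paper: write $f(x-Jy)=f_0(x,y)-Jf_1(x,y)$, conjugate using $\overline{st}=\overline{t}\,\overline{s}$ and $\overline{J}=-J$ to move $J$ to the other side, identify the new components $\overline{f_0},\overline{f_1}$, and observe that the compatibility and Cauchy--Riemann conditions are preserved under conjugation. The only cosmetic difference is in part iii), where the paper invokes $f(\overline{s})=\overline{f(s)}$ for intrinsic $f$ directly while you verify the same fact componentwise.
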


\begin{proof}
i)\;\;Let $f_0,f_1:\mathcal{U}\rightarrow\mathbb{R}_d$ be the two components of the function $f$ according to the decomposition \eqref{Eq_Holomorphic_decomposition}. Then we can write the function $f^\#$ for every $(x,y)\in\mathcal{U}$, $J\in\mathbb{S}$ as
\begin{equation*}
f^\#(x+Jy)=\overline{f(x-Jy)}=\overline{f_0(x,y)-Jf_1(x,y)} =\overline{f_0(x,y)}+\overline{f_1(x,y)}J.
\end{equation*}
Hence the components of $f^\#$ are given by
\begin{equation*}
f_0^\#(u,v):=\overline{f_0(u,v)}\qquad\text{and}\qquad f_1^\#(u,v):=\overline{f_1(u,v)},\qquad(u,v)\in\mathcal{U}.
\end{equation*}
Since $f_0,f_1$ satisfy the symmetry conditions \eqref{Eq_Symmetry_condition} as well as the Cauchy-Riemann equations \eqref{Eq_Cauchy_Riemann_equations}, the same is true also for $f_0^\#,f_1^\#$. This means we have proven $f^\#\in\mathcal{SH}_R(U)$. \medskip

ii)\;\;Analogously to i). \medskip

iii)\;\;If $f$ is intrinsic, there clearly is
\begin{equation*}
f^\#(s)=\overline{f(\overline{s})}=\overline{\overline{f(s)}}=f(s),\qquad s\in U. \qedhere
\end{equation*}
\end{proof}

\subsection{The bounded $S$-functional calculus}\label{sec_Bounded_SFC}

In this subsection we derive the connection between the $S$-functional calculus of a bounded operator $T$ and its adjoint $T^*$. Let us start with the definition of the $S$-functional calculus of a bounded operator.

\begin{defi}[Bounded $S$-functional calculus]
Let $T\in\mathcal{B}(V)$. Then for every $f\in\mathcal{SH}_L(U')$ (resp. $f\in\mathcal{SH}_R(U')$), defined on some axially symmetric, open set $U'\supseteq\sigma_S(T)$, the \textit{bounded $S$-functional calculus} is defined as \medskip

\begin{minipage}{0.33\textwidth}
\begin{center}
\begin{tikzpicture}
\draw[fill=black!15] (0,0) ellipse (2cm and 1.4cm) (0.6,0.6) node[anchor=south] {\large{$U'$}};
\draw (-2.3,1.2) node[anchor=west] {\Large{$\mathbb{C}_J$}};
\draw[fill=white] (0.7,0) ellipse (0.7cm and 0.5cm);
\draw[fill=black!30] (-0.9,0) ellipse (0.5cm and 0.7cm) node[anchor=south] {\scriptsize{$\sigma_S(T)$}};
\draw[thick] (-0.9,0) ellipse (0.7cm and 1cm);
\draw[ultra thick,->] (-1.6,0.05)--(-1.6,-0.05);
\draw (-0.4,-0.7) node[anchor=west] {\large{$U$}};
\draw[->] (-2.5,0)--(2.3,0);
\draw[->] (-2.3,-1.3)--(-2.3,1.5);
\end{tikzpicture}
\end{center}
\end{minipage}
\begin{minipage}{0.66\textwidth}
\begin{subequations}
\begin{align}
f(T):=&\frac{1}{2\pi}\int_{\partial U\cap\mathbb{C}_J}S_L^{-1}(s,T)ds_Jf(s), \label{Eq_Bounded_SFC_left} \\
\bigg(\text{resp. }f(T):=&\frac{1}{2\pi}\int_{\partial U\cap\mathbb{C}_J}f(s)ds_JS_R^{-1}(s,T)\bigg). \label{Eq_Bounded_SFC_right}
\end{align}
\end{subequations}
Here $J\in\mathbb{S}$ is arbitrary and $U$ is any axially symmetric, open set, with $\sigma_S(T)\subseteq U\subseteq\overline{U}\subseteq U'$, and regular enough boundary.
\end{minipage}

\medskip For more information on the bounded $S$-functional calculus we refer to \cite[Section 3.2]{CGK} or \cite[Chapter 3.3]{ColomboSabadiniStruppa2011}.
\end{defi}

Since we know from Lemma~\ref{lem_Properties_Adjoint}~ii) that there is $T^*\in\mathcal{B}(V)$ whenever $T\in\mathcal{B}(V)$, the following theorem gives a connection between the bounded $S$-functional calculus of $T$ and $T^*$.

\begin{thm}\label{thm_Bounded_Tstar}
Let $T\in\mathcal{B}(V)$ and $f\in\mathcal{SH}_L(U')$ or $f\in\mathcal{SH}_R(U')$ for some axially symmetric, open set $U'\supseteq\sigma_S(T)$. Then the bounded $S$-functional calculi of $T^*$ and $T$ are connected via
\begin{equation}\label{Eq_Bounded_Tstar}
f(T^*)=f^\#(T)^*.
\end{equation}
If moreover $f\in\mathcal{N}(U')$ is intrinsic, then there is even $f(T^*)=f(T)^*$.
\end{thm}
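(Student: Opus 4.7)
The plan is to read the identity off directly from the integral definitions \eqref{Eq_Bounded_SFC_left}, \eqref{Eq_Bounded_SFC_right}, using the resolvent relation $S_R^{-1}(s,T)^* = S_L^{-1}(\overline{s}, T^*)$ from Theorem~\ref{thm_Spectrum_Tstar} together with a change of variables $u = \overline{s}$ on the contour. I will focus on the case $f \in \mathcal{SH}_L(U')$; the case $f \in \mathcal{SH}_R(U')$ is entirely symmetric after swapping the roles of left and right resolvents, and the intrinsic case then collapses at once via Lemma~\ref{lem_Properties_fsharp}(iii), which gives $f^\# = f$.

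By Lemma~\ref{lem_Properties_fsharp}(i), $f^\# \in \mathcal{SH}_R(U')$, so $f^\#(T)$ is represented by the right integral \eqref{Eq_Bounded_SFC_right} along a contour $\gamma := \partial U \cap \mathbb{C}_J$. Since $\gamma$ is compact and the integrand is continuous in the norm of $\mathcal{B}(V)$, the integral is a norm-limit of Riemann sums, and the isometry $\Vert A^*\Vert = \Vert A\Vert$ from Lemma~\ref{lem_Properties_Adjoint}(ii) lets me pass the adjoint inside. The pointwise adjoint of a single integrand is then computed from three ingredients: the elementary identity that for $A \in \mathcal{B}(V)$ and $\alpha \in \mathbb{R}_d$ the map $v \mapsto \alpha A v$ has adjoint $v \mapsto A^*(\overline{\alpha} v)$, a direct consequence of \eqref{Eq_Inner_product_property}; the product-conjugation rule $\overline{f^\#(s)\,ds_J} = \overline{ds_J}\, f(\overline{s})$; and the resolvent identity of Theorem~\ref{thm_Spectrum_Tstar}. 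Together, these rewrite the adjoint integrand at $s \in \gamma$ as left-multiplication by $\overline{ds_J}\, f(\overline{s})$ composed with $S_L^{-1}(\overline{s}, T^*)$.

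The final step is the substitution $u = \overline{s}$. By axial symmetry, $\gamma$ is setwise invariant under conjugation, but its $J$-positive orientation is reversed; a direct parametric check using $ds_J = -J\,ds$ shows that $\overline{ds_J} = -du_J$ on the new parameter, so the orientation reversal and this minus sign cancel. The resulting integral matches exactly the left $S$-calculus formula \eqref{Eq_Bounded_SFC_left} for $T^*$ applied to $f$, which yields $f^\#(T)^* = f(T^*)$. I expect the main obstacle to be precisely this orientation-plus-sign bookkeeping; the cleanest way to carry it out is to parametrize $s(t) = \alpha(t) + J\beta(t)$ and $u(t) = \overline{s(t)}$ and to use that tangent vectors to $\gamma \subset \mathbb{C}_J$ commute with $J$, so that the differential of $u$ pairs with the resolvent and $f$ in the correct order to reproduce the left $S$-functional calculus for $T^*$.
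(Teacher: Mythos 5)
Your argument is correct and is essentially the paper's own proof: both pass the adjoint through the contour integral, rewrite the integrand via $S_R^{-1}(s,T)^*=S_L^{-1}(\overline{s},T^*)$, $\overline{f^\#(s)}=f(\overline{s})$ and $\overline{ds_J}=d\overline{s}_{-J}$, and then use the conjugation symmetry of $\partial U\cap\mathbb{C}_J$ together with the independence of the $S$-functional calculus from the choice of imaginary unit ($J$ versus $-J$) to recognize the left calculus of $T^*$. The intrinsic case follows in both treatments from Lemma~\ref{lem_Properties_fsharp}~iii).
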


Note that if $f\in\mathcal{SH}_L(U')$, then $f(T^*)$ is understood as the left bounded $S$-functional calculus \eqref{Eq_Bounded_SFC_left}, and $f^\#(T)$ as the right bounded $S$-functional calculus \eqref{Eq_Bounded_SFC_right}. If on the other hand $f\in\mathcal{SH}_R(U')$, then $f^\#\in\mathcal{SH}_L(U')$, and the two sides of \eqref{Eq_Bounded_Tstar} are understood accordingly.

\begin{proof}[Proof of Theorem~\ref{thm_Bounded_Tstar}]
Without loss of generality let us only consider $f\in\mathcal{SH}_L(U')$. The case $f\in\mathcal{SH}_R(U')$ is analog. Using the definition \eqref{Eq_Bounded_SFC_right}, we write $f^\#(T)^*$ as
\begin{equation}\label{Eq_Bounded_Tstar_1}
f^\#(T)^*=\frac{1}{2\pi}\bigg(\int_{\partial U\cap\mathbb{C}_J}f^\#(s)ds_JS_R^{-1}(s,T)\bigg)^*=\frac{1}{2\pi}\int_{\partial U\cap\mathbb{C}_J}S_R^{-1}(s,T)^*d\overline{s_J}\,\overline{f^\#(s)}.
\end{equation}
By \eqref{Eq_Resolvents_adjoint} and \eqref{Eq_fsharp}, the individual terms of this integrand can now be written as
\begin{equation*}
S_R^{-1}(s,T)^*=S_L^{-1}(\overline{s},T^*),\qquad d\overline{s_J}=d\overline{s}_{-J},\qquad\text{and}\qquad\overline{f^\#(s)}=f(\overline{s}).
\end{equation*}
This reduces \eqref{Eq_Bounded_Tstar_1} to
\begin{equation}\label{Eq_Bounded_Tstar_2}
f^\#(T)^*=\frac{1}{2\pi}\int_{\partial U\cap\mathbb{C}_J}S_L^{-1}(\overline{s},T^*)d\overline{s}_{-J}f(\overline{s})=\frac{1}{2\pi}\int_{\partial U\cap\mathbb{C}_{-J}}S_L^{-1}(s,T^*)ds_{-J}f(s)=f(T^*),
\end{equation}
where in the second equation we used that the integral over $\overline{s}$ coincides with the one over $s$ since the integration path $\partial U\cap\mathbb{C}_J$ is symmetric with respect to the real line. In the last line we used that the $S$-functional calculus is independent of the chosen imaginary unit $J$ or $-J$. \medskip

Finally, for $f\in\mathcal{N}(U')$, there is $f=f^\#$ by Lemma~\ref{lem_Properties_fsharp}~iii), and $f(T^*)=f(T)^*$ is a consequence of \eqref{Eq_Bounded_Tstar}.
\end{proof}

\subsection{The unbounded $S$-functional calculus}\label{sec_Unbounded_SFC}

In comparison to the Section~\ref{sec_Bounded_SFC} we now consider unbounded, closed operators $T\in\mathcal{K}(V)$ and again look for a connection between the unbounded $S$-functional calculus of $T$ and $T^*$. We start with the definition of the $S$-functional calculus for arbitrary unbounded operators, and holomorphic functions which admit the same limit along any path $|s|\rightarrow\infty$. The idea is to "integrate around the spectrum" in the sense that the spectrum is completely outside of the integration path (which is the reason for the minus sign in front of the integral in \eqref{Eq_Unbounded_SFC}). Additionally, one has to add the constant value $f_\infty$ of the function $f$ at $\infty$ to the functional calculus.

\begin{defi}[Unbounded $S$-functional calculus]\label{defi_Unbounded_SFC}
Let $T\in\mathcal{K}(V)$ and $K\subseteq\rho_S(T)$ be compact, axially symmetric. Then for every $f\in\mathcal{SH}_L(\mathbb{R}^{d+1}\setminus K)$ (resp. $f\in\mathcal{SH}_R(\mathbb{R}^{d+1}\setminus K)$), for which the limit
\begin{equation*}
\lim\limits_{|s|\rightarrow\infty}f(s)=:f_\infty\in\mathbb{R}_d
\end{equation*}
exists, the \textit{unbounded $S$-functional calculus} is defined as \medskip

\begin{minipage}{0.33\textwidth}
\begin{center}
\begin{tikzpicture}
\fill[black!30] (-2.5,-1.3)--(-2.5,1.3)--(2.1,1.3)--(2.1,-1.3);
\draw (2.2,1.3) node[anchor=north east] {$\sigma_S(T)$};
\draw[fill=black!15] (0,0) ellipse (1.7cm and 1cm);
\draw[fill=black!30] (1,0) ellipse (0.4cm and 0.55cm);
\draw (-2.3,1.2) node[anchor=west] {\Large{$\mathbb{C}_J$}};
\draw[fill=white] (-0.4,0) ellipse (0.65cm and 0.45cm) node[anchor=center] {$K$};
\draw[thick] (-0.4,0) ellipse (0.8cm and 0.6cm);
\draw[ultra thick,->] (-1.2,0.05)--(-1.2,-0.05);
\draw (-1.1,0) node[anchor=north east] {\large{$U$}};
\draw[->] (-2.5,0)--(2.3,0);
\draw[->] (-2.3,-1.4)--(-2.3,1.5);
\end{tikzpicture}
\end{center}
\end{minipage}
\begin{minipage}{0.66\textwidth}
\begin{subequations}\label{Eq_Unbounded_SFC}
\begin{align}
f(T):=&f_\infty-\frac{1}{2\pi}\int_{\partial U\cap\mathbb{C}_J}S_L^{-1}(s,T)ds_Jf(s), \label{Eq_Unbounded_SFC_left} \\
\bigg(\text{resp. }f(T):=&f_\infty-\frac{1}{2\pi}\int_{\partial U\cap\mathbb{C}_J}f(s)ds_JS_R^{-1}(s,T)\bigg). \label{Eq_Unbounded_SFC_right}
\end{align}
\end{subequations}
Here $J\in\mathbb{S}$ is arbitrary and $U$ is an axially symmetric, open set, with $K\subseteq U\subseteq\overline{U}\subseteq\rho_S(T)$, and regular enough boundary.
\end{minipage}

\medskip For more information on the unbounded $S$-functional calculus we refer to \cite[Section 3.4]{FJBOOK} or \cite[Section 3.7]{ColomboSabadiniStruppa2011}.
\end{defi}

The following theorem now connects the unbounded $S$-functional calculus of $T^*$ and $T$.

\begin{thm}\label{thm_Unbounded_Tstar}
Let $T\in\mathcal{K}(V)$ be densely defined and $K\subseteq\rho_S(T)$ compact, axially symmetric. Then for every $f\in\mathcal{SH}_L(\mathbb{R}^{d+1}\setminus K)$ or $f\in\mathcal{SH}_R(\mathbb{R}^{d+1}\setminus K)$, for which the limit $\lim_{|s|\rightarrow\infty}f(s)$ exists in $\mathbb{R}_d$, the unbounded $S$-functional calculus of $T^*$ and $T$ are connected via
\begin{equation}\label{Eq_Unbounded_Tstar}
f(T^*)=f^\#(T)^*.
\end{equation}
If moreover $f\in\mathcal{N}(\mathbb{R}^{d+1}\setminus K)$ is intrinsic, then there is even $f(T^*)=f(T)^*$.
\end{thm}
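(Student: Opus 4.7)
The plan is to mirror the proof of Theorem~\ref{thm_Bounded_Tstar}, adding only one new bookkeeping step to handle the constant value $f_\infty$ at infinity. Without loss of generality I take $f\in\mathcal{SH}_L(\mathbb{R}^{d+1}\setminus K)$; the case $f\in\mathcal{SH}_R(\mathbb{R}^{d+1}\setminus K)$ is entirely symmetric. By Lemma~\ref{lem_Properties_fsharp}~ii) one has $f^\#\in\mathcal{SH}_R(\mathbb{R}^{d+1}\setminus K)$, and taking the limit $|s|\to\infty$ in \eqref{Eq_fsharp} gives $f_\infty^\#=\overline{f_\infty}$, so the unbounded right $S$-functional calculus \eqref{Eq_Unbounded_SFC_right} does apply to $f^\#$.

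Starting from \eqref{Eq_Unbounded_SFC_right} for $f^\#(T)$, I would split off the bounded constant term with Lemma~\ref{lem_Properties_Adjoint}~v) to obtain
\begin{equation*}
f^\#(T)^*=(f_\infty^\#)^*-\frac{1}{2\pi}\bigg(\int_{\partial U\cap\mathbb{C}_J}f^\#(s)\,ds_J\,S_R^{-1}(s,T)\bigg)^*.
\end{equation*}
The integral term is then manipulated verbatim as in the chain \eqref{Eq_Bounded_Tstar_1}--\eqref{Eq_Bounded_Tstar_2}: exchange adjoint with integration, insert $S_R^{-1}(s,T)^*=S_L^{-1}(\overline{s},T^*)$ from Theorem~\ref{thm_Spectrum_Tstar}, apply $\overline{f^\#(s)}=f(\overline{s})$ together with $d\overline{s_J}=d\overline{s}_{-J}$, and finally substitute $s\mapsto\overline{s}$ along the axially symmetric path $\partial U\cap\mathbb{C}_J$, invoking the $J$-independence of the $S$-functional calculus. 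The outcome is
\begin{equation*}
-\frac{1}{2\pi}\int_{\partial U\cap\mathbb{C}_{-J}}S_L^{-1}(s,T^*)\,ds_{-J}\,f(s),
\end{equation*}
which is exactly the integral appearing in the left unbounded calculus \eqref{Eq_Unbounded_SFC_left} of $T^*$.

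For the constant term I would verify $(f_\infty^\#)^*=f_\infty$ as operators on $V$. Interpreting the scalar as right multiplication (consistently with the use of $\mathcal{I}^R s$ in the preliminaries), this reduces to $(\mathcal{I}^R\overline{f_\infty})^*=\mathcal{I}^R f_\infty$, precisely the identity already invoked in the proof of Theorem~\ref{thm_Spectrum_Tstar}. Combining both contributions reproduces the defining formula \eqref{Eq_Unbounded_SFC_left} for $f(T^*)$ and establishes \eqref{Eq_Unbounded_Tstar}; the intrinsic refinement $f(T^*)=f(T)^*$ then follows immediately from $f=f^\#$ via Lemma~\ref{lem_Properties_fsharp}~iii). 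The only genuinely new obstacle compared to the bounded case is to ensure that $f_\infty$ is interpreted as the same (right-)multiplication operator in both \eqref{Eq_Unbounded_SFC_left} and \eqref{Eq_Unbounded_SFC_right}; once this convention is fixed, every remaining step is a direct transcription of the bounded argument.
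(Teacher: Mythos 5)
Your proposal is correct and follows essentially the same route as the paper: both reduce to the bounded computation \eqref{Eq_Bounded_Tstar_1}--\eqref{Eq_Bounded_Tstar_2} after checking that $f^\#$ satisfies the hypotheses of Definition~\ref{defi_Unbounded_SFC} for $T$ (with $f_\infty^\#=\overline{f_\infty}$, and the same compact set $K$ usable for $T^*$ since $\rho_S(T^*)=\rho_S(T)$), and both then observe that the constant term passes to $f_\infty$ under the adjoint. The only quibble is your reading of the constant as the right multiplication $\mathcal{I}^R f_\infty$: since $f_\infty$ may be a general Clifford number, right multiplication is only $\mathbb{R}$-linear and would destroy the right-linearity of $f^\#(T)$, so the intended convention is left multiplication $v\mapsto f_\infty^\# v$, whose adjoint is $v\mapsto\overline{f_\infty^\#}\,v=f_\infty v$ by \eqref{Eq_Inner_product_property} --- which yields the same conclusion $(f_\infty^\#)^*=f_\infty$ that the paper uses.
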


Note that if $f\in\mathcal{SH}_L(\mathbb{R}^{d+1}\setminus K)$, then $f(T^*)$ is understood as the left unbounded $S$-functional calculus \eqref{Eq_Unbounded_SFC_left}, and $f^\#(T)$ as the right unbounded $S$-functional calculus \eqref{Eq_Unbounded_SFC_right}. If on the other hand $f\in\mathcal{SH}_R(\mathbb{R}^{d+1}\setminus K)$, then $f^\#\in\mathcal{SH}_L(\mathbb{R}^{d+1}\setminus K)$, and the two sides of \eqref{Eq_Unbounded_Tstar} are understood accordingly.

\begin{proof}[Proof of Theorem~\ref{thm_Unbounded_Tstar}]
Without loss of generality let us only consider $f\in\mathcal{SH}_L(\mathbb{R}^{d+1}\setminus K)$. The case $f\in\mathcal{SH}_R(\mathbb{R}^{d+1}\setminus K)$ is analog. First, since $\rho_S(T^*)=\rho_S(T)$ by Theorem~\ref{thm_Spectrum_Tstar}, we can choose the same compact set $K$ for $T$ and $T^*$ in Definition~\ref{defi_Unbounded_SFC}, i.e. the operator $f(T^*)$ on the left hand side of \eqref{Eq_Unbounded_Tstar} is well defined. Moreover, since by assumption the limit $\lim_{|s|\rightarrow\infty}f(s)=:f_\infty\in\mathbb{R}_d$ exists, there also exists the limit
\begin{equation*}
f_\infty^\#:=\lim\limits_{|s|\rightarrow\infty}f^\#(s)=\lim\limits_{|s|\rightarrow\infty}\overline{f(\overline{s})}=\overline{f_\infty}\in\mathbb{R}_d.
\end{equation*}
Hence, the function $f^\#$ satisfies the assumptions of Definition~\ref{defi_Unbounded_SFC} for the operator $T$, i.e. also $f^\#(T)$ on the right hand side of \eqref{Eq_Unbounded_Tstar} is well defined. \medskip

In order to show that $f(T^*)$ coincides with the adjoint of $f^\#(T)$, we first derive in the same way as in \eqref{Eq_Bounded_Tstar_1} and \eqref{Eq_Bounded_Tstar_2} the adjoint of the integral
\begin{equation*}
\bigg(\int_{\partial U\cap\mathbb{C}_J}f^\#(s)ds_JS_R^{-1}(s,T)\bigg)^*=\int_{\partial U\cap\mathbb{C}_{-J}}S_L^{-1}(s,T^*)ds_{-J}f(s).
\end{equation*}
This identity then shows that
\begin{align*}
f^\#(T)^*&=\bigg(f_\infty^\#-\frac{1}{2\pi}\int_{\partial U\cap\mathbb{C}_J}f^\#(s)ds_JS_R^{-1}(s,T)\bigg)^* \\
&=\overline{f_\infty^\#}-\frac{1}{2\pi}\bigg(\int_{\partial U\cap\mathbb{C}_J}f^\#(s)ds_JS_R^{-1}(s,T)\bigg)^* \\
&=f_\infty-\frac{1}{2\pi}\int_{\partial U\cap\mathbb{C}_{-J}}S_L^{-1}(s,T^*)ds_{-J}f(s)=f(T^*),
\end{align*}
where in the last equation we again used that the imaginary unit $J$ in the definition \eqref{Eq_Unbounded_SFC_left} of the functional calculus is arbitrary, and we are free to use $-J$ instead of $J$. \medskip

Finally, for $f\in\mathcal{N}(\mathbb{R}^{d+1}\setminus K)$, there is $f=f^\#$ by Lemma~\ref{lem_Properties_fsharp}~iii), and $f(T^*)=f(T)^*$ is a consequence of \eqref{Eq_Unbounded_Tstar}.
\end{proof}

\subsection{The $\omega$-functional calculus}

Since for unbounded operators $T\in\mathcal{K}(V)$ in Section~\ref{sec_Unbounded_SFC}, the number of functions $f$ which satisfy the assumptions of the unbounded $S$-functional calculus in Definition~\ref{defi_Unbounded_SFC} is very restricted, this section and the upcoming Section~\ref{sec_Hinfty_FC} give a more sophisticated approach to the $S$-functional calculus of unbounded operators. However in order to "integrate around the spectrum", we need some geometrical assumptions on the $S$-spectrum and some decay assumptions on the $S$-resolvent operators. There are different ways of doing this, and in this article we will consider the important special class of bisectorial operators. In order to introduce them, we define for every $\omega\in(0,\frac{\pi}{2})$ the open \textit{double sector}
\begin{equation}\label{Eq_Domega}
D_\omega:=\big\{s\in\mathbb{R}^{d+1}\setminus\{0\}\;\big|\;\Arg(s)\in I_\omega\big\},
\end{equation}
using the union of intervals $I_\omega:=(-\omega,\omega)\cup(\pi-\omega,\pi+\omega)$.

\begin{defi}[Bisectorial operators]\label{defi_Bisectorial_operators}
An operator $T\in\mathcal{K}(V)$ is called \textit{bisectorial} of angle $\omega\in(0,\frac{\pi}{2})$, if its $S$-spectrum is contained in the closed double sector
\begin{equation*}
\sigma_S(T)\subseteq\overline{D_\omega},
\end{equation*}
and for every $\varphi\in(\omega,\frac{\pi}{2})$ there exists some $C_\varphi\geq 0$, such that the left $S$-resolvent operator \eqref{Eq_SL_SR} is bounded by
\begin{equation}\label{Eq_SL_estimate}
\Vert S_L^{-1}(s,T)\Vert\leq\frac{C_\varphi}{|s|},\qquad s\in\mathbb{R}^{d+1}\setminus(D_\varphi\cup\{0\}).
\end{equation}
\end{defi}

\begin{rem}
The instance that for bisectorial operators only an upper bound on the left $S$-resolvent operator $S_L^{-1}(s,T)$ is assumed, is not a restriction to left holomorphic functions of the corresponding functional calculus. Indeed, it is proven in \cite[Lemma~2.12]{MS24}, that it already follows from \eqref{Eq_SL_estimate} that also the right $S$-resolvent operator $S_R^{-1}(s,T)$ is bounded by
\begin{equation}\label{Eq_SR_estimate}
\Vert S_R^{-1}(s,T)\Vert\leq\frac{2C_\varphi}{|s|},\qquad s\in\mathbb{R}^{d+1}\setminus(D_\varphi\cup\{0\}).
\end{equation}
\end{rem}

In order to make the integrals in Definition~\ref{defi_omega_SFC} of the $\omega$-functional calculus converge, we also have to assume certain integrability assumptions on the function $f$ of the functional calculus.

\begin{defi}\label{defi_SH_0}
For every $\theta\in(0,\frac{\pi}{2})$ let $D_\theta$ be the double sector and $I_\theta$ the union of intervals from \eqref{Eq_Domega}. Then we define the function spaces
\begin{align*}
\text{i)}\;\;&\mathcal{SH}_L^0(D_\theta):=\bigg\{f\in\mathcal{SH}_L(D_\theta)\;\bigg|\;f\text{ is bounded},\;\int_0^\infty|f(te^{J\phi})|\frac{dt}{t}<\infty,\;J\in\mathbb{S},\phi\in I_\theta\bigg\}, \\
\text{ii)}\;\;&\mathcal{SH}_R^0(D_\theta):=\bigg\{f\in\mathcal{SH}_R(D_\theta)\;\bigg|\;f\text{ is bounded},\;\int_0^\infty|f(te^{J\phi})|\frac{dt}{t}<\infty,\;J\in\mathbb{S},\phi\in I_\theta\bigg\}, \\
\text{iii)}\;\;&\mathcal{N}^0(D_\theta):=\bigg\{f\in\mathcal{N}(D_\theta)\;\bigg|\;f\text{ is bounded},\;\int_0^\infty|f(te^{J\phi})|\frac{dt}{t}<\infty,\;J\in\mathbb{S},\phi\in I_\theta\bigg\}.
\end{align*}
\end{defi}

The idea of the upcoming $\omega$-functional calculus is now to "integrate around the spectrum", i.e. around the left and right sectors, in the sense that closing the path on the left and on the right is not necessary. Indeed, due to \cite[Lemma~3.2]{MS24}, the integrability assumptions in Definition~\ref{defi_SH_0} imply that $f$ vanishes at $\infty$ inside the double sector. Moreover, integrating through the point $s=0$, which may be a point in the $S$-spectrum, is not allowed. So the integrand in this point is understood as the limit of the integration path approaching zero from the different directions. Also this is possible since $f$ vanishes as $s\rightarrow 0$ by \cite[Lemma~3.2]{MS24}.

\begin{defi}[$\omega$-functional calculus]\label{defi_omega_SFC}
Let $T\in\mathcal{K}(V)$ be bisectorial of angle $\omega\in(0,\frac{\pi}{2})$. Then for every $f\in\mathcal{SH}_L^0(D_\theta)$ (resp. $f\in\mathcal{SH}_R^0(D_\theta)$), the \textit{$\omega$-functional calculus} is defined as \medskip

\begin{minipage}{0.35\textwidth}
\begin{center}
\begin{tikzpicture}[scale=0.8]
\fill[black!15] (0,0)--(1.56,1.56) arc (45:-45:2.2)--(0,0)--(-1.56,-1.56) arc (225:135:2.2);
\fill[black!30] (0,0)--(2,0.93) arc (25:-25:2.2)--(0,0)--(-2,-0.93) arc (205:155:2.2);
\draw (2,0.93)--(-2,-0.93);
\draw (2,-0.93)--(-2,0.93);
\draw (1.56,1.56)--(-1.56,-1.56);
\draw (1.56,-1.56)--(-1.56,1.56);
\draw (0.9,0) arc (0:25:0.9) (0.67,-0.1) node[anchor=south] {\tiny{$\omega$}};
\draw (1.3,0) arc (0:35:1.3) (1.05,-0.05) node[anchor=south] {\tiny{$\varphi$}};
\draw (1.7,0) arc (0:45:1.7) (1.45,0.05) node[anchor=south] {\tiny{$\theta$}};
\draw[thick] (1.8,1.26)--(-1.8,-1.26);
\draw[thick] (1.8,-1.26)--(-1.8,1.26);
\draw[thick,->] (1.8,1.26)--(1.47,1.03);
\draw[thick,->] (0,0)--(1.47,-1.03);
\draw[thick,->] (-1.8,-1.26)--(-1.47,-1.03);
\draw[thick,->] (0,0)--(-1.47,1.03);
\draw[->] (-2.4,0)--(2.6,0);
\draw[->] (0,-1.5)--(0,1.5) node[anchor=north east] {\large{$\mathbb{C}_J$}};
\end{tikzpicture}
\end{center}
\end{minipage}
\begin{minipage}{0.64\textwidth}
\begin{subequations}\label{Eq_omega}
\begin{align}
f(T):=&\frac{1}{2\pi}\int_{\partial D_\varphi\cap\mathbb{C}_J}S_L^{-1}(s,T)ds_Jf(s), \label{Eq_omega_left} \\
\bigg(\text{resp. }f(T):=&\frac{1}{2\pi}\int_{\partial D_\varphi\cap\mathbb{C}_J}f(s)ds_JS_R^{-1}(s,T)\bigg). \label{Eq_omega_right}
\end{align}
\end{subequations}
Here, $J\in\mathbb{S}$ and $\varphi\in(\omega,\theta)$ are arbitrary and the integral is independent of the choice.
\end{minipage}
\end{defi}

For more information, on the $\omega$-functional calculus, we refer to \cite[Section 3]{MS24}. \medskip

In order to characterize the $\omega$-functional calculus $f(T^*)$ of the adjoint, we first have to verify that $T^*$ is indeed a bisectorial operator.

\begin{lem}\label{lem_Sectorial_Tstar}
Let $T\in\mathcal{K}(V)$ be bisectorial of angle $\omega\in(0,\frac{\pi}{2})$. Then $T$ is densely defined, and also $T^*\in\mathcal{K}(V)$ is bisectorial of the same angle $\omega$.
\end{lem}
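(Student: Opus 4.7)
My plan would be to reduce the statement to four independent claims: (i) $T$ is densely defined, (ii) $T^*\in\mathcal{K}(V)$, (iii) $\sigma_S(T^*)\subseteq\overline{D_\omega}$, and (iv) the resolvent bound $\|S_L^{-1}(s,T^*)\|\leq C'_\varphi/|s|$ for $\varphi\in(\omega,\frac{\pi}{2})$ and $s\notin D_\varphi\cup\{0\}$. Claims (ii)--(iv) will be short corollaries of the machinery already established in Sections~2 and~3; it is the density claim (i) that carries the actual work.

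For (i), I would exploit the fact that the sector $\overline{D_\omega}$ misses every nonzero purely imaginary paravector: for any $J\in\mathbb{S}$ and $t>0$ one has $\Arg(tJ)=\frac{\pi}{2}\notin I_\omega$, so $tJ\in\rho_S(T)$ and $Q_{tJ}[T]=T^2+t^2$. Picking $\varphi\in(\omega,\frac{\pi}{2})$, the bisectoriality estimate~\eqref{Eq_SL_estimate} together with its right companion~\eqref{Eq_SR_estimate} translates, after separating the two summands in $S_L^{-1}(tJ,T)$ and $S_R^{-1}(tJ,T)$, into a uniform bound (in $t$) on the approximants $t^2 Q_{tJ}[T]^{-1}$ and $tTQ_{tJ}[T]^{-1}$. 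A standard closed-operator approximation argument then shows that $t^2 Q_{tJ}[T]^{-1}v\to v$ as $t\to\infty$ for every $v\in V$. Since $t^2 Q_{tJ}[T]^{-1}v\in\dom(T^2)\subseteq\dom(T)$, this yields $\overline{\dom}(T)=V$. This density step is the main obstacle: it requires translating the Cliffordian resolvent bounds into a usable form along the imaginary-unit rays and then mimicking the classical sectorial-operator density argument in the Clifford setting.

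Once density is in hand, (ii) is immediate: Lemma~\ref{lem_Right_linear_adjoint} supplies right-linearity of $T^*$, and closedness is automatic from Definition~\ref{defi_Adjoint_operator} (or Lemma~\ref{lem_Properties_Adjoint}~i)). For (iii), Theorem~\ref{thm_Spectrum_Tstar} gives $\sigma_S(T^*)=\sigma_S(T)\subseteq\overline{D_\omega}$ directly. For (iv), I would fix $\varphi\in(\omega,\frac{\pi}{2})$ and $s\in\mathbb{R}^{d+1}\setminus(D_\varphi\cup\{0\})$; axial symmetry of $D_\varphi$ then yields $\overline{s}\in\mathbb{R}^{d+1}\setminus(D_\varphi\cup\{0\})$ as well. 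Using the identity $S_L^{-1}(s,T^*)=S_R^{-1}(\overline{s},T)^*$ from Theorem~\ref{thm_Spectrum_Tstar}, the isometry property of adjoints in Lemma~\ref{lem_Properties_Adjoint}~ii), and the bound~\eqref{Eq_SR_estimate}, I conclude
\begin{equation*}
\|S_L^{-1}(s,T^*)\|=\|S_R^{-1}(\overline{s},T)\|\leq\frac{2C_\varphi}{|\overline{s}|}=\frac{2C_\varphi}{|s|},
\end{equation*}
so $T^*$ is bisectorial of the same angle $\omega$, with constant $2C_\varphi$.
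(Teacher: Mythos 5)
Your steps (ii)--(iv) are exactly the paper's proof: the paper also gets the sector inclusion $\sigma_S(T^*)=\sigma_S(T)\subseteq\overline{D_\omega}$ from Theorem~\ref{thm_Spectrum_Tstar}, and the resolvent bound from the identity $S_L^{-1}(s,T^*)=S_R^{-1}(\overline{s},T)^*$ of \eqref{Eq_Resolvents_adjoint}, the norm equality of Lemma~\ref{lem_Properties_Adjoint}~ii), and the estimate \eqref{Eq_SR_estimate}, arriving at the same constant $2C_\varphi$. The divergence is in the density claim (i): the paper does not prove it at all but cites \cite[Theorem~3.3]{RIGHT}, whereas you attempt a direct argument, and as sketched that argument has a genuine gap.

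The gap is a circularity. The uniform bounds on $t^2Q_{tJ}[T]^{-1}$ and on $tTQ_{tJ}[T]^{-1}$ are indeed available (for instance by adding and subtracting $S_L^{-1}(tJ,T)$ and $S_L^{-1}(-tJ,T)$, which share the same pseudo-resolvent $Q_{\pm tJ}[T]^{-1}=(T^2+t^2)^{-1}$). But the ``standard approximation argument'' only yields $t^2Q_{tJ}[T]^{-1}v\rightarrow v$ for $v\in\overline{\dom}(T)$: for $v\in\dom(T)$ one writes
\begin{equation*}
v-t^2Q_{tJ}[T]^{-1}v=T^2Q_{tJ}[T]^{-1}v=\big(TQ_{tJ}[T]^{-1}\big)(Tv),
\end{equation*}
whose norm is $O(1/t)\Vert Tv\Vert$, and then extends by uniform boundedness. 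Asserting the convergence ``for every $v\in V$'' is equivalent to the density of $\dom(T)$ that you are trying to establish, so the step, as written, assumes its own conclusion. The standard repair exists because $V$ is a Hilbert module, hence reflexive as a real Banach space: for arbitrary $v\in V$ extract a weakly convergent subsequence $t_n^2Q_{t_nJ}[T]^{-1}v\rightharpoonup w$; then $w\in\overline{\dom}(T^2)$ since weak and norm closures of a subspace coincide, and $w=v$ follows by applying the bounded (hence weakly continuous) operator $Q_{rJ}[T]^{-1}$, using that it commutes with $Q_{t_nJ}[T]^{-1}$, that $Q_{rJ}[T]^{-1}v\in\dom(T^2)$ so the already-proven norm convergence applies there, and that $Q_{rJ}[T]^{-1}$ is injective. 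With this weak-compactness insertion your plan for (i) closes and is a legitimate self-contained alternative to the citation; without it, step (i) does not go through.
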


\begin{proof}
First, it is shown in \cite[Theorem~3.3]{RIGHT}, that $T$ is densely defined, and hence $T^*$ a well defined closed operator. Moreover, by Theorem~\ref{thm_Spectrum_Tstar} the $S$-spectrum of $T^*$ is contained in the same closed double sector
\begin{equation*}
\sigma_S(T^*)=\sigma_S(T)\subseteq\overline{D_\omega}.
\end{equation*}
In order to verify the norm estimate \eqref{Eq_SL_estimate} of $S_L^{-1}(s,T^*)$, we use the representation \eqref{Eq_Resolvents_adjoint} of $S_L^{-1}(s,T^*)$, the operator norm of the adjoint operator in Lemma~\ref{lem_Properties_Adjoint}~ii), as well as the upper bound of the right resolvent operator \eqref{Eq_SR_estimate}, to get
\begin{equation*}
\Vert S_L^{-1}(s,T^*)\Vert=\Vert S_R^{-1}(\overline{s},T)^*\Vert=\Vert S_R^{-1}(\overline{s},T)\Vert\leq\frac{2C_\varphi}{|s|},\qquad s\in\mathbb{R}^{d+1}\setminus\big(D_\varphi\cup\{0\}\big). \qedhere
\end{equation*}
\end{proof}

\begin{thm}\label{thm_omega_SFC_Tstar}
Let $T\in\mathcal{K}(V)$ be bisectorial of angle $\omega\in(0,\frac{\pi}{2})$. Then for every $\theta\in(0,\frac{\pi}{2})$, and $f\in\mathcal{SH}_L^0(D_\theta)$ or $f\in\mathcal{SH}_R^0(D_\theta)$, the $\omega$-functional calculi of $T^*$ and $T$ are connected via
\begin{equation}\label{Eq_omega_SFC_Tstar}
f(T^*)=f^\#(T)^*.
\end{equation}
If moreover $f\in\mathcal{N}(D_\theta)$ is intrinsic, then there is even $f(T^*)=f(T)^*$.
\end{thm}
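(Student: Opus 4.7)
The strategy is essentially the one already carried out for the bounded $S$-functional calculus in Theorem~\ref{thm_Bounded_Tstar} and for the unbounded case in Theorem~\ref{thm_Unbounded_Tstar}: take the adjoint inside the integral defining $f^\#(T)$, use the $S$-resolvent identities \eqref{Eq_Resolvents_adjoint}, turn $f^\#$ back into $f$ via $\overline{f^\#(s)}=f(\overline{s})$, and then exploit the symmetry of the integration contour under $s\mapsto\overline{s}$ together with the freedom to replace $J$ by $-J$.

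More concretely, without loss of generality I consider $f\in\mathcal{SH}_L^0(D_\theta)$; the other case is symmetric. First I verify that both sides of \eqref{Eq_omega_SFC_Tstar} are well defined. On the left, Lemma~\ref{lem_Sectorial_Tstar} gives that $T^*$ is bisectorial of the same angle $\omega$, so the left $\omega$-functional calculus $f(T^*)$ is defined by \eqref{Eq_omega_left}. On the right, $f^\#\in\mathcal{SH}_R(D_\theta)$ by Lemma~\ref{lem_Properties_fsharp}~i), and since $D_\theta$ and the parameter range $I_\theta$ are both symmetric under $\phi\mapsto -\phi$, the boundedness and the integrability condition $\int_0^\infty |f^\#(te^{J\phi})|\frac{dt}{t}<\infty$ follow directly from the corresponding properties of $f$ via $|\overline{f(\overline{te^{J\phi}})}|=|f(te^{-J\phi})|$. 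Hence $f^\#\in\mathcal{SH}_R^0(D_\theta)$ and $f^\#(T)$ is defined by \eqref{Eq_omega_right}.

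Next I compute $f^\#(T)^*$. Since $T$ is bisectorial, $\int_{\partial D_\varphi\cap\mathbb{C}_J}f^\#(s)\,ds_J\,S_R^{-1}(s,T)$ converges absolutely in operator norm thanks to \eqref{Eq_SR_estimate} and the integrability of $f^\#$; the remark about convergence at $s=0$ and at $\infty$ is handled exactly as in Definition~\ref{defi_omega_SFC}. Because the adjoint map is an isometric $\mathbb{R}$-linear involution on $\mathcal{B}(V)$ (Lemma~\ref{lem_Properties_Adjoint}~ii)), it commutes with this Bochner integral, and the usual Clifford rules $(AB)^*=B^*A^*$ and $\overline{st}=\overline{t}\,\overline{s}$ yield, exactly as in \eqref{Eq_Bounded_Tstar_1},
\begin{equation*}
f^\#(T)^*=\frac{1}{2\pi}\int_{\partial D_\varphi\cap\mathbb{C}_J} S_R^{-1}(s,T)^*\,d\overline{s_J}\,\overline{f^\#(s)}.
\end{equation*}
Using \eqref{Eq_Resolvents_adjoint} one has $S_R^{-1}(s,T)^*=S_L^{-1}(\overline{s},T^*)$, together with $d\overline{s_J}=d\overline{s}_{-J}$ and $\overline{f^\#(s)}=f(\overline{s})$. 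The substitution $s\mapsto\overline{s}$ (under which $\partial D_\varphi\cap\mathbb{C}_J$ is invariant, since $D_\varphi$ is axially symmetric) then turns the above into
\begin{equation*}
f^\#(T)^*=\frac{1}{2\pi}\int_{\partial D_\varphi\cap\mathbb{C}_{-J}}S_L^{-1}(s,T^*)\,ds_{-J}\,f(s),
\end{equation*}
and invoking the independence of the $\omega$-functional calculus on the choice of imaginary unit (we may use $-J$ in place of $J$) identifies the right-hand side with $f(T^*)$. The intrinsic case is immediate from Lemma~\ref{lem_Properties_fsharp}~iii), which gives $f^\#=f$ and hence $f(T^*)=f(T)^*$.

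\textbf{Main obstacle.} The only nontrivial point is the exchange of the adjoint with the integral over the non-compact contour $\partial D_\varphi\cap\mathbb{C}_J$. Unlike the bounded and unbounded cases (Theorems~\ref{thm_Bounded_Tstar} and \ref{thm_Unbounded_Tstar}), where the integration path is compact, here one has to rely on absolute convergence of the Bochner integral, which in turn rests on the bound \eqref{Eq_SR_estimate} transferred from \eqref{Eq_SL_estimate} and on $f\in\mathcal{SH}_L^0(D_\theta)$; the isometry $\|A^*\|=\|A\|$ then ensures that the adjointed integral has the same integrability. All remaining steps are bookkeeping with the symmetries of $D_\varphi$, $f^\#$, and the choice of $J$.
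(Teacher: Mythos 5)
Your proposal is correct and follows essentially the same route as the paper's proof: verify via Lemma~\ref{lem_Sectorial_Tstar} that $T^*$ is bisectorial and that $f^\#$ lies in the appropriate $\mathcal{SH}^0$ class, then repeat the manipulations of \eqref{Eq_Bounded_Tstar_1}--\eqref{Eq_Bounded_Tstar_2} (adjoint under the integral, the resolvent identities \eqref{Eq_Resolvents_adjoint}, symmetry of the contour under conjugation, and the freedom to replace $J$ by $-J$), with the passage of the adjoint through the unbounded contour integral justified by operator-norm convergence of the integral and the isometry of the adjoint map. Your explicit check that $f^\#$ inherits the integrability condition of Definition~\ref{defi_SH_0} is a small but welcome addition that the paper leaves implicit.
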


Note that if $f\in\mathcal{SH}_L(D_\theta)$, then $f(T^*)$ is understood as the left $\omega$-functional calculus \eqref{Eq_omega_left}, and $f^\#(T)$ as the right $\omega$-functional calculus \eqref{Eq_omega_right}. If on the other hand $f\in\mathcal{SH}_R(D_\theta)$, then $f^\#\in\mathcal{SH}_L(D_\theta)$, and the two sides of \eqref{Eq_omega_SFC_Tstar} are understood accordingly.

\begin{proof}[Proof of Theorem~\ref{thm_omega_SFC_Tstar}]
Since we have verified in Lemma~\ref{lem_Sectorial_Tstar} that $T^*$ is a bisectorial operator of angle $\omega$, the functional calculus $f(T^*)$ is well defined. Since it is clear by the Definition~\ref{defi_SH_0} of the space, that for every $f\in\mathcal{SH}_L(D_\theta)$ there is $f^\#\in\mathcal{SH}_R(D_\theta)$, and for every $f\in\mathcal{SH}_R(D_\theta)$ there is $f^\#\in\mathcal{SH}_L(D_\theta)$, also the functional calculus $f^\#(T)$ is well defined. The actual equality \eqref{Eq_omega_SFC_Tstar} then follows in the same manner as in \eqref{Eq_Bounded_Tstar_1} and \eqref{Eq_Bounded_Tstar_2}, namely
\begin{equation*}
f^\#(T)^*=\frac{1}{2\pi}\bigg(\int_{\partial D_\varphi\cap\mathbb{C}_J}f^\#(s)ds_JS_R^{-1}(s,T)\bigg)^*=\frac{1}{2\pi}\int_{\partial D_\varphi\cap\mathbb{C}_{-J}}S_L^{-1}(s,T^*)ds_{-J}f(s)=f(T^*).
\end{equation*}
Note that all the manipulations in \eqref{Eq_Bounded_Tstar_1} and \eqref{Eq_Bounded_Tstar_2} are also allowed in this setting of the integral along the unbounded path $\partial D_\theta\cap\mathbb{C}_J$. Indeed, carrying the adjoint inside the integral is allowed, since the map that associates bounded operators with its adjoint is continuous in the operator norm topology, and the integral converges in this operator norm topology. \medskip

Finally, for $f\in\mathcal{N}(D_\theta)$ there is $f=f^\#$ by Lemma~\ref{lem_Properties_fsharp}~iii), and $f(T^*)=f(T)^*$ follows immediately from \eqref{Eq_omega_SFC_Tstar}.
\end{proof}

\subsection{The $H^\infty$-functional calculus}\label{sec_Hinfty_FC}

The $H^\infty$-functional calculus, which we consider in this subsection, is a regularization procedure that extends the $\omega$-functional calculus of Definition~\ref{defi_omega_SFC}. In particular, it avoids the very restrictive integrability assumptions on $f$ of Definition~\ref{defi_SH_0}, and rather treats the polynomially growing functions from Definition~\ref{defi_SH_poly}. The main result will then be formally the same identity as in \eqref{Eq_omega_SFC_Tstar}, but interpreted as unbounded or even multivalued operators.

\begin{defi}\label{defi_SH_poly}
For every $\alpha\in \mathbb{N}$ and $\theta\in(0,\frac{\pi}{2})$ let $D_\theta$ be the double sector from \eqref{Eq_Domega}. Then we define the function spaces
\begin{align*}
\text{i)}\;\;&\mathcal{SH}_L^\poly(D_\theta):=\bigg\{f\in\mathcal{SH}_L(D_\theta)\;\bigg|\;\exists C,\alpha\geq 0: |f(s)|\leq C\Big(|s|^\alpha+\frac{1}{|s|^\alpha}\Big),\text{ for every }s\in D_\theta\bigg\}; \\
\text{ii)}\;\;&\mathcal{SH}_R^\poly(D_\theta):=\bigg\{f\in\mathcal{SH}_R(D_\theta)\;\bigg|\;\exists C,\alpha\geq 0: |f(s)|\leq C\Big(|s|^\alpha+\frac{1}{|s|^\alpha}\Big),\text{ for every }s\in D_\theta\bigg\}; \\
\text{iii)}\;\;&\mathcal{N}^\poly(D_\theta):=\bigg\{f\in\mathcal{N}(D_\theta)\;\bigg|\;\exists C,\alpha\geq 0: |f(s)|\leq C\Big(|s|^\alpha+\frac{1}{|s|^\alpha}\Big),\text{ for every }s\in D_\theta\bigg\}.
\end{align*}
\end{defi}

In contrast to the previous sections on the bounded, the unbounded and the $\omega$-functional calculus, the following definition of the $H^\infty$-functional calculus is different for left and for right slice hyperholomorphic functions.

\begin{defi}[$H^\infty$-functional calculus]\label{defi_Hinfty_SFC}
Let $T\in\mathcal{K}(V)$ be an injective bisectorial operator of angle $\omega\in(0,\frac{\pi}{2})$. Then for every $f\in\mathcal{SH}_L^\poly(D_\theta)$, for some $\theta\in(\omega,\frac{\pi}{2})$, we define the \textit{left $H^\infty$-functional calculus} as
\begin{equation}\label{Eq_Left_Hinfty}
f(T):=e(T)^{-1}(ef)(T),
\end{equation}
and for every $f\in\mathcal{SH}_R^\poly(D_\theta)$, for some $\theta\in(\omega,\frac{\pi}{2})$, the \textit{right $H^\infty$-functional calculus} as
\begin{equation}\label{Eq_Right_Hinfty}
f(T):=\overline{(fe)(T)e(T)^{-1}},
\end{equation}
where all the terms $e(T)$, $(ef)(T)$ and $(fe)(T)$ are understood as the $\omega$-functional calculus \eqref{Eq_omega}. In both cases, the regularizer function $e\in\mathcal{N}^0(D_\theta)$ is given by $e(s)=\frac{s^m}{(1+s^2)^m}$, for some $m\in\mathbb{N}$ with $m>\alpha$, and $\alpha$ from Definition~\ref{defi_SH_poly}. Also, since $T$ is injective, the operator $e(T)=T^m(1+T^2)^{-m}$ is injective as well, i.e. $e(T)^{-1}$ is an unbounded closed operator. Hence the left $H^\infty$-functional calculus \eqref{Eq_Left_Hinfty} is a closed operator (as the product of a closed and a bounded operator), while the right $H^\infty$-functional calculus \eqref{Eq_Right_Hinfty} is a multivalued operator (since $(fe)(T)e(T)^{-1}$ is in general not closable and the closure is understood in the sense of multivalued operators \eqref{Eq_Closure_multivalued}). For more details on the $H^\infty$-functional on the $H^\infty$-functional calculus we refer to \cite{RIGHT,MS24}.
\end{defi}

\begin{thm}\label{thm_Hinfty_Tstar}
Let $T\in\mathcal{K}(V)$ be injective and bisectorial of angle $\omega\in(0,\frac{\pi}{2})$. Then for every $f\in\mathcal{SH}_L^\poly(D_\theta)$ or $f\in\mathcal{SH}_R^\poly(D_\theta)$, for some $\theta\in(\omega,\frac{\pi}{2})$, the $H^\infty$-functional calculi of $T^*$ and $T$ are connected via
\begin{equation*}
f(T^*)=f^\#(T)^\star.
\end{equation*}
\end{thm}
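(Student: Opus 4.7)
The plan is to reduce the $H^\infty$-identity to the $\omega$-functional calculus identity of Theorem~\ref{thm_omega_SFC_Tstar}, combined with the algebraic adjoint rules of Lemma~\ref{lem_Properties_Adjoint}. Before starting I would verify that the right-hand side is well defined: namely, that $T^*$ is injective bisectorial of the same angle $\omega$, and that $f^\#$ lies in the space dual to the one containing $f$. Bisectoriality of $T^*$ is Lemma~\ref{lem_Sectorial_Tstar}. For injectivity, I would invoke the standard McIntosh-type decomposition $V=\ker(T)\oplus\overline{\ran(T)}$ for bisectorial operators in a Hilbert module, so that $T$ injective forces $\overline{\ran(T)}=V$, and hence $\ker(T^*)=\{0\}$. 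From Lemma~\ref{lem_Properties_fsharp} and the pointwise identity $|f^\#(s)|=|f(\overline{s})|$, $f^\#\in\mathcal{SH}_R^{\poly}(D_\theta)$ whenever $f\in\mathcal{SH}_L^{\poly}(D_\theta)$, and vice versa. Moreover, the regularizer $e(s)=s^m/(1+s^2)^m$ is intrinsic, so $e^\#=e$ by Lemma~\ref{lem_Properties_fsharp}~iii).

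Consider first the left case $f\in\mathcal{SH}_L^{\poly}(D_\theta)$, so $f(T^*)=e(T^*)^{-1}(ef)(T^*)$ and $f^\#(T)=\overline{(f^\# e)(T)\,e(T)^{-1}}$. The conjugation reversal in $\mathbb{R}_d$ gives
\begin{equation*}
(ef)^\#(s)=\overline{e(\overline{s})\,f(\overline{s})}=\overline{f(\overline{s})}\,\overline{e(\overline{s})}=f^\#(s)\,e(s),
\end{equation*}
which lies in $\mathcal{SH}_R^0(D_\theta)$ by the choice $m>\alpha$. Applying Theorem~\ref{thm_omega_SFC_Tstar} to the intrinsic $e$ and to $ef\in\mathcal{SH}_L^0(D_\theta)$ then yields
\begin{equation*}
e(T^*)=e(T)^*\qquad\text{and}\qquad(ef)(T^*)=(f^\# e)(T)^*.
\end{equation*}
Since $(f^\# e)(T)\in\mathcal{B}(V)$, Lemma~\ref{lem_Properties_Adjoint}~iv) gives $\bigl((f^\# e)(T)\,e(T)^{-1}\bigr)^*=(e(T)^{-1})^*(f^\# e)(T)^*$, while Lemma~\ref{lem_Properties_Adjoint}~i) allows the closure to be dropped under the adjoint. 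Applying Lemma~\ref{lem_Properties_Adjoint}~vi) to the bijection $e(T)\colon\dom(e(T))\to\ran(e(T))$ gives $(e(T)^{-1})^*=(e(T)^*)^{-1}$, and chaining these identities produces $f^\#(T)^*=(e(T)^*)^{-1}(f^\# e)(T)^*=e(T^*)^{-1}(ef)(T^*)=f(T^*)$.

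For the right case $f\in\mathcal{SH}_R^{\poly}(D_\theta)$, the symmetric computation delivers $(fe)^\#(s)=e(s)\,f^\#(s)$, so Theorem~\ref{thm_omega_SFC_Tstar} gives $(fe)(T^*)=(ef^\#)(T)^*$. One then uses the second half of Lemma~\ref{lem_Properties_Adjoint}~iv), namely $(SB)^*=\overline{B^*S^*}$ when $S$ is closed and $B$ bounded, applied to $S=e(T)^{-1}$ and $B=(ef^\#)(T)$. This produces precisely the closure that appears in the definition \eqref{Eq_Right_Hinfty} of $f(T^*)$ on the adjoint side, yielding $f^\#(T)^*=\bigl(e(T)^{-1}(ef^\#)(T)\bigr)^*=\overline{(ef^\#)(T)^*(e(T)^*)^{-1}}=f(T^*)$.

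The delicate point I expect to require the most care is this last matching in the right case, where both sides are genuinely multivalued: one must show that $f(T^*)$ and $f^\#(T)^*$ coincide as subsets of $V\times V$, not merely that one is contained in the other. This is exactly the situation where the unbounded-product adjoint rule in Lemma~\ref{lem_Properties_Adjoint}~iv) improves from the generic inclusion $(ST)^*\supseteq T^*S^*$ to an equality only after a closure is inserted on the correct side. Making these closures line up will require invoking both the dense domain of $T^*$ (built into Lemma~\ref{lem_Sectorial_Tstar}) and the density of $\ran(e(T))$ inherited from the McIntosh decomposition of the injective bisectorial $T$.
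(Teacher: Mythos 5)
Your proposal is correct and follows essentially the same route as the paper: verify that $T^*$ is injective and bisectorial (via $\ker(T^*)=\ran(T)^\perp$ and the density of the range of an injective bisectorial operator), transfer the regularizer and the regularized function through Theorem~\ref{thm_omega_SFC_Tstar} using $(ef)^\#=f^\#e$, and then apply the adjoint rules of Lemma~\ref{lem_Properties_Adjoint}~i), iv) and vi) to match the two $H^\infty$-definitions, with the closure in the right case supplied by the rule $(ST)^*=\overline{T^*S^*}$ for $S$ closed and $T$ bounded. The only cosmetic difference is that you run the left-case chain of equalities from $f^\#(T)^*$ to $f(T^*)$ while the paper runs it in the opposite direction.
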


Note, that if $f\in\mathcal{SH}_L^\poly(D_\theta)$, then $f(T^*)$ is understood as the left $H^\infty$-functional calculus \eqref{Eq_Left_Hinfty} and $f^\#(T)$ as the right $H^\infty$-functional calculus \eqref{Eq_Right_Hinfty}. Moreover, since $f^\#(T)$ is densely defined by \cite[Proposition 4.10]{RIGHT}, the adjoint $f^\#(T)^*$ is considered as the adjoint \eqref{Eq_Adjoint_operator} of a closed operator. Conversely, if $f\in\mathcal{SH}_R^\poly(D_\theta)$, then $f(T^*)$ is a multivalued operator as in \eqref{Eq_Right_Hinfty}, and since $f^\#(T)$ for the left holomorphic function $f^\#$ is not densely defined in general, its adjoint $f^\#(T)^*$ has to be considered in terms of multivalued operators \eqref{Eq_Adjoint_multivalued}.

\begin{proof}[Proof of Theorem~\ref{thm_Hinfty_Tstar}]
In order for $f(T^*)$ to be well defined, we have to check if $T^*$ satisfies the assumptions of Definition~\ref{defi_Hinfty_SFC}. The fact that $T^*$ is bisectorial of angle $\omega$ was already proven in Lemma~\ref{lem_Sectorial_Tstar}, and the injectivity follows from
\begin{equation*}
\ker(T^*)=\ran(T)^\perp=\{0\},
\end{equation*}
where the range of a injective, bisectorial operator is dense by \cite[Theorem~3.3~ii)]{RIGHT}. \medskip

In the first part of the proof we will treat the case $f\in\mathcal{SH}_L(D_\theta)$. Let $e$ be the regularizer of the function $f$ in Definition~\ref{defi_Hinfty_SFC}. Then the same function $e$ is also a regularizer of $f^\#$. From the definition of the $H^\infty$-functional calculus there then follows
\begin{equation*}
f(T^*)=e(T^*)^{-1}(ef)(T^*).
\end{equation*}
Using now that
\begin{equation}\label{Eq_Hinfty_Tstar_1}
e(T^*)=e(T)^*,\qquad\text{as well as}\qquad(ef)(T^*)=(f^\#e)(T)^*,
\end{equation}
by \eqref{Eq_omega_SFC_Tstar}, and also using the properties of Lemma~\ref{lem_Properties_Adjoint}~vi) and iv), we get
\begin{equation*}
f(T^*)=(e(T)^*)^{-1}(f^\#e)(T)^*=(e(T)^{-1})^*(f^\#e)(T)^*=\big((f^\#e)(T)e(T)^{-1}\big)^*.
\end{equation*}
Finally, using the property Lemma~\ref{lem_Properties_Adjoint}~i), transforms the right hand side the right $H^\infty$-functional calculus of $f^\#$, namely
\begin{equation*}
f(T^*)=\big(\overline{(f^\#e)(T)e(T)^{-1}}\big)^*=f^\#(T)^*.
\end{equation*}
In the second part of the proof we will treat the case $f\in\mathcal{SH}_R(D_\theta)$. Then the definition \eqref{Eq_Left_Hinfty} of the left $H^\infty$-functional for the function $f^\#$ gives
\begin{equation*}
\big(f^\#(T)\big)^*=\big(e(T)^{-1}(ef^\#)(T)\big)^*.
\end{equation*}
Now using the Lemma~\ref{lem_Properties_Adjoint}~iv), we can split up the adjoint of the product, and get
\begin{equation*}
\big(f^\#(T)\big)^*=\overline{(ef^\#)(T)^*(e(T)^{-1})^*}.
\end{equation*}
Then similarly to \eqref{Eq_Hinfty_Tstar_1}, we can then carry the adjoint on the operator, which leaves us with exactly the definition of the right $H^\infty$-functional calculus of $T^*$, namely
\begin{equation*}
\big(f^\#(T)\big)^*=\overline{(fe)(T^*)e(T^*)^{-1}}=f(T^*). \qedhere
\end{equation*}
\end{proof}

\section{The multiplication operator in the Clifford setting}\label{sec_Multiplication_operator}

Now that we have developed all the $S$-functional calculi of the adjoint operator in Section~\ref{sec_SFC}, we will introduce in this section the multiplication operator as an example.

\begin{defi}
Let $X$ be a measure space, and $h:X\rightarrow\mathbb{R}_d$ measurable. Then define the \textit{multiplication operator}
\begin{equation}\label{Eq_Multiplication_operator}
(M_hf)(x):=h(x)f(x)\qquad\text{with}\qquad\dom(M_h):=\big\{f\in L^2(X)\;\big|\;hf\in L^2(X)\big\}.
\end{equation}
\end{defi}

The following lemma provides some basic properties of the multiplication operator \eqref{Eq_Multiplication_operator}. Since the proofs are mainly the same as in the complex case, they will be omitted.

\begin{lem}\label{lem_Properties_multiplication_operator}
Let $X$ be a measure space, and $g,h:X\rightarrow\mathbb{R}_d$ be measurable. Then there is:

\begin{enumerate}
\item[i)] $M_h$ is a densely defined, closed operator; \medskip
\item[ii)] If $X$ is $\sigma$-finite, then $M_h\in\mathcal{B}(L^2(X))$ if and only if $h\in L^\infty(X)$. \\
In this case there is $\Vert h\Vert_{L^\infty}\leq\Vert M_h\Vert\leq 2^{\frac{d}{2}}\Vert h\Vert_{L^\infty}$; \medskip
\item[iii)] $M_gM_h\subseteq M_{gh}$,\quad with\quad $\dom(M_gM_h)=\dom(M_{gh})\cap\dom(M_h)$.
\end{enumerate}
\end{lem}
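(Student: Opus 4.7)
The plan is to mirror the classical complex-valued proofs for multiplication operators, intervening with Clifford-algebra tools only where noncommutativity or the norm inequality $\|hv\| \leq 2^{d/2}|h|\|v\|$ forces a change. Throughout, one uses that the Clifford product $h(x) f(x)$ is pointwise associative and that $h(x) \in \mathbb{R}_d$ has a well-defined modulus from \eqref{Eq_Conjugate_Norm}.

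For (i), I would prove density by truncation: set $E_n := \{x \in X : |h(x)| \leq n\}$, which exhausts $X$ up to a null set since $h$ is measurable with $|h(x)| < \infty$ a.e. For any $f \in L^2(X)$ the functions $f_n := \chi_{E_n} f$ lie in $\dom(M_h)$ (because $|h f_n| \leq n |f|$) and converge to $f$ in $L^2(X)$ by dominated convergence, giving $\overline{\dom}(M_h) = L^2(X)$. For closedness, given a sequence $(f_n) \subset \dom(M_h)$ with $f_n \to f$ and $M_h f_n \to g$ in $L^2(X)$, I would pass to a subsequence converging pointwise a.e., so that $h f = \lim h f_n = g$ a.e., concluding $f \in \dom(M_h)$ and $M_h f = g$.

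For (ii), the upper bound follows by integrating the pointwise Clifford estimate $\|h(x) v\| \leq 2^{d/2} |h(x)|\|v\|$ from the preliminaries, yielding $\|M_h\| \leq 2^{d/2}\|h\|_{L^\infty}$. The lower bound is where $\sigma$-finiteness enters and where care is needed to avoid a spurious $2^{d/2}$ loss. For each $\varepsilon > 0$ the set $A_\varepsilon := \{x : |h(x)| > \|h\|_{L^\infty} - \varepsilon\}$ has positive measure, and by $\sigma$-finiteness contains a measurable $F$ with $0 < \mu(F) < \infty$. The test function $f := \chi_F / \sqrt{\mu(F)}$ is real-valued, so $h(x) f(x)$ is just the pointwise scalar rescaling of $h(x)$, whence $|h f| = |h| |f|$ pointwise and
\begin{equation*}
\|M_h f\|_{L^2}^2 = \frac{1}{\mu(F)} \int_F |h|^2 \, d\mu \geq (\|h\|_{L^\infty} - \varepsilon)^2.
\end{equation*}
Letting $\varepsilon \to 0$ gives $\|M_h\| \geq \|h\|_{L^\infty}$. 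Replacing $\|h\|_{L^\infty} - \varepsilon$ by arbitrary $n \in \mathbb{N}$ in the same construction shows that unbounded $h$ produces an unbounded $M_h$, establishing the equivalence.

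For (iii), associativity of the Clifford product gives $(g(x) h(x)) f(x) = g(x)(h(x) f(x))$ pointwise, so $f \in \dom(M_g M_h)$ precisely when $hf \in L^2(X)$ (i.e.\ $f \in \dom(M_h)$) and $g(hf) = (gh)f \in L^2(X)$ (i.e.\ $f \in \dom(M_{gh})$), and on this intersection $M_g M_h f = (gh) f = M_{gh} f$, yielding both the inclusion and the stated domain identity. The only genuinely nontrivial point is the sharp lower bound in (ii): a direct Clifford estimate would cost a factor $2^{d/2}$, and the route around this is to take real-valued test functions so that the Clifford norm estimate degenerates to an equality; every other step is formally identical to the classical $L^2$ argument.
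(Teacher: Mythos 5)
Your proof is correct, and it is essentially the argument the paper has in mind: the paper omits the proof of this lemma with the remark that it is ``mainly the same as in the complex case,'' which is precisely what you carry out. The one point where the Clifford setting genuinely requires care --- avoiding a spurious $2^{\frac{d}{2}}$ loss in the lower bound of (ii) by testing against real-valued characteristic functions, for which $|h(x)f(x)|=|h(x)|\,|f(x)|$ holds with equality --- is handled correctly; the only (harmless) slip is that the pointwise bound in (i) should read $|hf_n|\leq 2^{\frac{d}{2}}n|f|$ rather than $n|f|$, which does not affect the conclusion.
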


An, in particular for this paper, important observation is now that the adjoint of the multiplication operator is again a multiplication operator with the conjugate function.

\begin{thm}\label{thm_Adjoint_multiplication_operator}
Let $X$ be a measure space, and $h:X\rightarrow\mathbb{R}_d$ be measurable. Then there is
\begin{equation*}
(M_h)^*=M_{\overline{h}}.
\end{equation*}
\end{thm}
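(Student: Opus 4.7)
The plan is to verify the two inclusions $M_{\overline{h}} \subseteq (M_h)^*$ and $(M_h)^* \subseteq M_{\overline{h}}$ using the inner-product characterization of the adjoint from Lemma~\ref{lem_Right_linear_adjoint}, combined with the fact that the Clifford conjugation is an antiautomorphism, i.e.\ $\overline{ab}=\overline{b}\,\overline{a}$ for all $a,b\in\mathbb{R}_d$. The underlying inner product on $L^2(X,\mathbb{R}_d)$, induced by \eqref{Eq_Inner_product}, has the familiar form $\langle f,g\rangle=\int_X\overline{f(x)}g(x)\,dx$.

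For the inclusion $M_{\overline{h}}\subseteq (M_h)^*$, I take $g\in\dom(M_{\overline{h}})$, which by definition means $\overline{h}g\in L^2(X)$. The plan is then to compute directly, for every $f\in\dom(M_h)$,
\begin{equation*}
\langle \overline{h}g,f\rangle=\int_X\overline{\overline{h(x)}g(x)}f(x)\,dx=\int_X\overline{g(x)}h(x)f(x)\,dx=\langle g,hf\rangle,
\end{equation*}
where the middle step uses $\overline{\overline{h}g}=\overline{g}\,\overline{\overline{h}}=\overline{g}h$. By Lemma~\ref{lem_Right_linear_adjoint} this identifies $g\in\dom((M_h)^*)$ with $(M_h)^*g=\overline{h}g=M_{\overline{h}}g$.

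For the reverse inclusion $(M_h)^*\subseteq M_{\overline{h}}$, I take $w\in\dom((M_h)^*)$ and set $v_w:=(M_h)^*w\in L^2(X)$. The key idea is to feed the defining identity $\langle v_w,f\rangle=\langle w,hf\rangle$ a rich supply of test functions obtained by truncation. Define $E_n:=\{x\in X:|h(x)|\leq n\}$ and, for arbitrary $g\in L^2(X)$, let $f_n:=\mathds{1}_{E_n}g$. The estimate $\|h f_n(x)\|\leq 2^{\frac{d}{2}}n|g(x)|$ shows $f_n\in\dom(M_h)$ and also $\mathds{1}_{E_n}\overline{h}w\in L^2(X)$. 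Substituting $f_n$ into the defining relation and rearranging with the antiautomorphism identity yields
\begin{equation*}
\langle\mathds{1}_{E_n}v_w,g\rangle=\langle\mathds{1}_{E_n}\overline{h}w,g\rangle,\qquad g\in L^2(X).
\end{equation*}
Choosing $g:=\mathds{1}_{E_n}(v_w-\overline{h}w)$ and taking scalar parts forces $\|\mathds{1}_{E_n}(v_w-\overline{h}w)\|^2=\Sc\langle \mathds{1}_{E_n}(v_w-\overline{h}w),\mathds{1}_{E_n}(v_w-\overline{h}w)\rangle=0$. Since $|h|<\infty$ everywhere and $X=\bigcup_n E_n$, letting $n\to\infty$ gives $v_w=\overline{h}w$ a.e.\ on $X$. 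In particular $\overline{h}w\in L^2(X)$, so $w\in\dom(M_{\overline{h}})$ and $M_{\overline{h}}w=v_w=(M_h)^*w$.

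The only potential obstacle is the absence of $\sigma$-finiteness in the standing hypothesis on $X$; this is neutralized automatically because any $L^2$-function is supported on a $\sigma$-finite set, so the truncation argument above never leaves the realm of $L^2$. The algebraic identity $\overline{ab}=\overline{b}\overline{a}$ used throughout is immediate from the Clifford conjugation formula in \eqref{Eq_Conjugate_Norm}, since conjugation reverses the order of basis vectors and is a linear antiinvolution.
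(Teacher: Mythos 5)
Your proof is correct, and its first inclusion ($M_{\overline h}\subseteq (M_h)^*$) is exactly the paper's argument: both rest on the conjugation antiautomorphism, i.e.\ the pointwise version of the identity $\langle v,sw\rangle=\langle\overline{s}v,w\rangle$ from \eqref{Eq_Inner_product_property}. Where you genuinely diverge is in the reverse inclusion. The paper takes $g\in\dom(M_h^*)$, writes $\langle M_h^*g,f\rangle=\langle g,hf\rangle=\langle\overline{h}g,f\rangle$ for all $f$ in the dense set $\dom(M_h)$, and concludes $M_h^*g=\overline{h}g$ directly from density. That step is terse: a priori $\overline{h}g$ need not lie in $L^2(X)$, so the pairing $f\mapsto\langle\overline{h}g,f\rangle$ is only a formally convergent integral rather than a bounded functional, and vanishing on a dense set does not immediately force the pointwise identity. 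Your truncation over $E_n=\{|h|\leq n\}$ supplies precisely the missing justification: on each $E_n$ both $\mathds{1}_{E_n}v_w$ and $\mathds{1}_{E_n}\overline{h}w$ are honestly in $L^2(X)$, the test with $g=\mathds{1}_{E_n}(v_w-\overline{h}w)$ is legitimate, and exhausting $X=\bigcup_nE_n$ gives $v_w=\overline{h}w$ a.e.\ together with the membership $\overline{h}w\in L^2(X)$ as a conclusion rather than an assumption. Your closing remark about $\sigma$-finiteness is harmless but not actually needed, since your truncation is in the range of $h$, not in the base space. In short: same skeleton as the paper, but your version of the hard inclusion is the more rigorous one.
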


\begin{proof}
For the inclusion $\text{\grqq}\subseteq\text{\grqq}$ let $g\in\dom(M_h^*)$, i.e., by Lemma~\ref{lem_Right_linear_adjoint}, there is
\begin{equation*}
\langle M_h^*g,f\rangle=\langle g,M_hf\rangle,\qquad f\in\dom(M_h).
\end{equation*}
Plugging in the definition of $M_h$ and using the properties \eqref{Eq_Inner_product_property} of the inner product, we can rewrite the right hand side as
\begin{equation*}
\langle M_h^*g,f\rangle=\langle g,hf\rangle=\langle\overline{h}g,f\rangle,\qquad f\in\dom(M_h).
\end{equation*}
Since this is true for every $f$ in the dense subset $\dom(M_h)$, it follows that there coincides
\begin{equation*}
M_h^*g=\overline{h}g.
\end{equation*}
This equality now proves that $\overline{h}g\in L^2(X)$, i.e. $g\in\dom(M_{\overline{h}})$, as well as
\begin{equation*}
M_h^*g=M_{\overline{h}}g.
\end{equation*}
For the inverse inclusion $\text{\grqq}\supseteq\text{\grqq}$ let $g\in\dom(M_{\overline{h}})$, i.e $g\in L^2(X)$ and $\overline{h}g\in L^2(X)$. Then there holds the identity
\begin{equation*}
\langle M_{\overline{h}}g,f\rangle=\langle\overline{h}g,f\rangle=\langle g,hf\rangle=\langle g,M_hf\rangle,\qquad f\in\dom(M_h).
\end{equation*}
By the definition \eqref{Eq_Adjoint_operator} of the adjoint operator, this shows $g\in\dom(M_h^*)$ and $M_h^*g=M_{\overline{h}}g$.
\end{proof}

Next we define the following subset of the Clifford algebra $\mathbb{R}_d$
\begin{equation}\label{Eq_NRd}
\mathcal{N}(\mathbb{R}_d):=\big\{s\in\mathbb{R}_d\;\big|\;|s|^2=s\overline{s}=s\overline{s}\big\}.
\end{equation}
Note that every element $0\neq s\in\mathcal{N}(\mathbb{R}_d)$ is invertible, with inverse given by $s^{-1}=\frac{\overline{s}}{|s|^2}$. \medskip

If we now reduce ourselves to functions $h:X\rightarrow\mathcal{N}(\mathbb{R}^d)$, which map into this space \eqref{Eq_NRd}, the following theorem transfers from the complex into the Clifford setting with the same proof.

\begin{thm}\label{thm_Bijective}
Let $X$ be a $\sigma$-finite measure space, and $h:X\rightarrow\mathcal{N}(\mathbb{R}_d)$ be measurable.

\begin{enumerate}
\item[i)] $M_h$ is injective if and only of $h(x)\neq 0$ for a.e. $x\in X$. \\
In this case the inverse operator is given by $M_h^{-1}=M_{h^{-1}}$. \medskip
\item[ii)] $M_h$ is surjective if and only if there exists $\varepsilon>0$, such that $|h(x)|\geq\varepsilon$, for a.e. $x\in X$.
\end{enumerate}
\end{thm}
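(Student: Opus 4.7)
The plan is to treat the two parts separately, leaning on the algebraic fact that for every $0\neq s\in\mathcal{N}(\mathbb{R}_d)$ the element $s^{-1}=\overline{s}/|s|^2$ is a genuine two-sided inverse with $|s^{-1}|=1/|s|$. This is exactly what makes the pointwise formula $h^{-1}(x)=\overline{h(x)}/|h(x)|^2$ well-defined wherever $h(x)\neq 0$, and lets me transplant the complex-variable arguments.

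For part~i), I would prove each implication by contraposition. If $h(x)\neq 0$ a.e.\ and $M_hf=0$, then $h(x)f(x)=0$ a.e., and left multiplication by $h^{-1}(x)$ yields $f=0$. Conversely, if $E:=\{h=0\}$ has positive measure, the $\sigma$-finiteness of $X$ lets one choose $E'\subseteq E$ with $0<\mu(E')<\infty$; then $\chi_{E'}\in\dom(M_h)$ is a nonzero element of $\ker(M_h)$. The identity $M_h^{-1}=M_{h^{-1}}$ would then follow from a direct comparison of domains and actions: $g\in\dom(M_h^{-1})$ iff $hf=g$ for some unique $f\in L^2(X)$, and pointwise inversion forces $f=h^{-1}g$; hence $\dom(M_h^{-1})=\{g\in L^2(X)\mid h^{-1}g\in L^2(X)\}=\dom(M_{h^{-1}})$ with $M_h^{-1}g=h^{-1}g$ on this common domain.

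For part~ii), the direction $(\Leftarrow)$ is immediate: $|h|\geq\varepsilon$ a.e.\ yields $|h^{-1}|\leq 1/\varepsilon$ a.e., so $h^{-1}\in L^\infty(X)$ and $M_{h^{-1}}\in\mathcal{B}(L^2(X))$ by Lemma~\ref{lem_Properties_multiplication_operator}~ii); for every $g\in L^2(X)$ the function $f:=M_{h^{-1}}g\in L^2(X)$ satisfies $M_hf=g$. For $(\Rightarrow)$, I would first rule out the possibility that $h$ vanishes on a set of positive measure: if it did, the same $E'$ as above would produce $\chi_{E'}\in L^2(X)\setminus\ran(M_h)$, since $hf=\chi_{E'}$ would force $0=1$ on $E'$. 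Hence $h\neq 0$ a.e., and by part~i) the operator $M_h$ is injective. Being closed by Lemma~\ref{lem_Properties_multiplication_operator}~i) and surjective by hypothesis, $M_h^{-1}$ is a closed operator defined on all of $L^2(X)$, so the closed graph theorem makes it bounded. Combining $M_h^{-1}=M_{h^{-1}}$ from part~i) with Lemma~\ref{lem_Properties_multiplication_operator}~ii) then forces $h^{-1}\in L^\infty(X)$, i.e.\ $|h|\geq 1/\|h^{-1}\|_{L^\infty}$ a.e.

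The only step that bundles several ingredients is the direction $(\Rightarrow)$ of ii), where one has to eliminate a possible zero set of $h$ by a range argument, invoke the closed graph theorem to obtain boundedness of $M_h^{-1}$, and finally translate boundedness of a multiplication operator into an $L^\infty$ bound on its symbol via Lemma~\ref{lem_Properties_multiplication_operator}~ii). No genuine obstacle is expected, since the Clifford algebra enters only through the benign condition $h\in\mathcal{N}(\mathbb{R}_d)$, which produces a well-behaved pointwise inverse with $|h^{-1}|=1/|h|$.
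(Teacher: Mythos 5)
Your proof is correct and is precisely the standard complex-case argument, adapted via the pointwise inverse $h^{-1}=\overline{h}/|h|^2$ on $\mathcal{N}(\mathbb{R}_d)$; the paper itself omits the proof, stating only that the theorem ``transfers from the complex into the Clifford setting with the same proof,'' which is exactly what you carry out. All the ingredients you invoke ($\sigma$-finiteness to produce $\chi_{E'}$, the closed graph theorem for $M_h^{-1}$, and Lemma~\ref{lem_Properties_multiplication_operator}~ii) to pass from boundedness of $M_{h^{-1}}$ to $h^{-1}\in L^\infty$) are available and used correctly.
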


\begin{lem}\label{lem_QsMh}
Let $X$ be a measure space, $h:X\rightarrow\mathbb{R}^{d+1}$ be measurable. Then for every $s\in\mathbb{R}^{d+1}$, the operator $Q_s[M_h]$ from \eqref{Eq_Qs} can be written as
\begin{equation}\label{Eq_QsMh}
Q_s[M_h]=M_{h^2-2s_0h+|s|^2}.
\end{equation}
\end{lem}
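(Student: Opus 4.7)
The plan is to prove the equality of the two closed operators by separately verifying equality of domains and equality of actions on the common domain. By definition, $\dom(Q_s[M_h]) = \dom(M_h^2)$, which by Lemma~\ref{lem_Properties_multiplication_operator}~iii) equals $\dom(M_{h^2}) \cap \dom(M_h) = \{f \in L^2(X) : hf, h^2 f \in L^2(X)\}$. On this set, a pointwise computation gives
\begin{equation*}
(Q_s[M_h]f)(x) = h(x)^2 f(x) - 2 s_0 h(x) f(x) + |s|^2 f(x) = (h^2 - 2 s_0 h + |s|^2)(x)\, f(x),
\end{equation*}
so the inclusion $Q_s[M_h] \subseteq M_{h^2 - 2 s_0 h + |s|^2}$ is immediate.

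The harder inclusion requires showing $\dom(M_{h^2 - 2 s_0 h + |s|^2}) \subseteq \dom(M_h^2)$: whenever $(h^2 - 2 s_0 h + |s|^2) f \in L^2(X)$, one must derive $hf, h^2 f \in L^2(X)$. I would reduce this to the pointwise scalar estimate
\begin{equation*}
|h(x)|^4 \leq C_s \bigl(1 + |h(x)^2 - 2 s_0 h(x) + |s|^2|^2\bigr),
\end{equation*}
with $C_s$ depending only on $s$. To establish it, I would decompose $h = h_0 + \Im(h)$ and $s = s_0 + \Im(s)$, use $\Im(h)^2 = -|\Im(h)|^2$ to rewrite
\begin{equation*}
h^2 - 2 s_0 h + |s|^2 = \bigl((h_0 - s_0)^2 + |\Im(s)|^2 - |\Im(h)|^2\bigr) + 2(h_0 - s_0)\,\Im(h),
\end{equation*}
and compute the squared modulus as the sum of the squares of the scalar part and the vectorial part. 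Straightforward algebra then yields the factorization
\begin{equation*}
|h^2 - 2 s_0 h + |s|^2|^2 = \bigl((h_0 - s_0)^2 + (|\Im(h)| - |\Im(s)|)^2\bigr)\bigl((h_0 - s_0)^2 + (|\Im(h)| + |\Im(s)|)^2\bigr),
\end{equation*}
from which $|h|^4 = (h_0^2 + |\Im(h)|^2)^2$ is controlled up to an additive constant depending only on $s$.

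With this bound in hand, the reverse inclusion follows quickly: the paravector identity $|h^2| = |h|^2$ gives $|h^2 f|^2 = |h|^4 |f|^2 \leq C_s\bigl(|f|^2 + |(h^2 - 2 s_0 h + |s|^2) f|^2\bigr) \in L^1(X)$, so $h^2 f \in L^2(X)$, and the elementary bound $|h| \leq \tfrac{1}{2}(1+|h|^2)$ then gives $hf \in L^2(X)$. The main obstacle of the argument is precisely this pointwise comparison of $|h|^4$ with $|h^2 - 2 s_0 h + |s|^2|^2$; the clean factorization above is the Clifford analog of the complex identity $|z^2 - 2 s_0 z + |s|^2|^2 = |z - s|^2 |z - \overline{s}|^2$ and is what makes the required comparison possible.
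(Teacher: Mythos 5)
Your proof is correct and takes essentially the same route as the paper: the trivial inclusion $Q_s[M_h]\subseteq M_{h^2-2s_0h+|s|^2}$ plus a pointwise scalar comparison between powers of $|h(x)|$ and $|h(x)^2-2s_0h(x)+|s|^2|$ for the reverse domain inclusion. The only (cosmetic) difference is that the paper uses the bound $|q|\leq M\bigl(1+|q^2-2s_0q+|s|^2|\bigr)$ to obtain $hf\in L^2(X)$ first and then $h^2f\in L^2(X)$ by linearity, whereas you control $|q|^4$ and obtain $h^2f$ first; your explicit factorization is the correct quantitative justification of the finiteness of the supremum that the paper simply asserts.
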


\begin{proof}
First, since $h:X\rightarrow\mathbb{R}^{d+1}$ maps into set of paravectors, the real polynomial combination $h^2-2s_0h+|s|^2:X\rightarrow\mathcal{N}(\mathbb{R}_d)$ maps into the set $\mathcal{N}(\mathbb{R}^d)$ from \eqref{Eq_NRd}. \medskip

For the operator inclusion $M_h^2-2s_0M_h+|s|^2\subseteq M_{h^2-2s_0h+|s|^2}$, let $f\in\dom(M_h^2)$. This means that $h^2f,hf\in L^2(X)$, and consequently also $(h^2-2s_0h+|s|^2)f\in L^2(X)$. This proves that $f\in\dom(M_{h^2-2s_0h+|s|^2})$, and moreover, the equality of the operation actions $(M_h^2-2s_0M_h+|s|^2)f=M_{h^2-2s_0h+|s|^2}f$ is then trivial. \medskip

For the inverse inclusion $M_{h^2-2s_0h+|s|^2}\subseteq M_h^2-2s_0M_h+|s|^2$, let $f\in\dom(M_{h^2-2s_0h+|s|^2})$, i.e. $f\in L^2(X)$ and $(h^2-2s_0h+|s|^2)f\in L^2(X)$. Let us now choose the constants
\begin{equation*}
M:=\sup\limits_{q\in\mathbb{R}^{d+1}}\frac{|q|}{1+|q^2-2s_0q+|s|^2|}<\infty,
\end{equation*}
with which we can now estimate
\begin{equation*}
|h(x)f(x)|=|h(x)||f(x)|\leq M\big(1+|h(x)^2-2s_0h(x)+|s|^2|\big)|f(x)|,\qquad x\in X.
\end{equation*}
Since $f\in L^2(X)$ and $(h^2-2s_0h+|s|^2)f\in L^2(X)$, it follows that also $hf\in L^2(X)$. Consequently, there follows also $h^2f\in L^2$, and in particular $f\in\dom(M_h^2)$. \end{proof}

Next, we want to describe the $S$-spectrum of the multiplication operator $M_h$. The key object in this description will be the essential range of a function.

\begin{defi}[Essential range]
Let $X$ be a measure space. Then for every measurable function $h:X\rightarrow\mathbb{R}^{d+1}$, we define the \textit{essential range}
\begin{equation}\label{Eq_essran}
\essran(h):=\Big\{s\in\mathbb{R}^{d+1}\;\Big|\;\forall\varepsilon>0: \mu\big\{x\in X\;\big|\;|h(x)-s|<\varepsilon\big\}>0\Big\}.
\end{equation}
\end{defi}

In the same way as in the complex case one now proves the following basic properties of the essential range.

\begin{lem}\label{lem_Properties_esssential_range}
Let $X$ be a measure space, $h:X\rightarrow\mathbb{R}^{d+1}$ measurable. Then there is

\begin{enumerate}
\item[i)] $\essran(h)$ is closed, with $\essran(h)\subseteq\overline{\ran}(h)$; \medskip
\item[ii)] $\big\{x\in X\;\big|\;h(x)\notin\essran(h)\big\}$ is a set of measure zero; \medskip
\item[iii)] $\Vert h\Vert_{L^\infty}=\sup(\essran(|h|))$; \medskip
\item[iv)] For every continuous $f:\essran(h)\rightarrow\mathbb{R}_d$, there is $\essran(f\circ h)=\overline{f(\essran(h))}$.
\end{enumerate}
\end{lem}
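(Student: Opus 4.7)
The overall plan is to proceed through the four parts in order, since (ii) uses the closedness from (i), and both (iii) and (iv) rely crucially on (ii). For part (i), closedness follows by a triangle inequality: if $s_n\in\essran(h)$ converges to $s$, then for any $\varepsilon>0$, choosing $n$ with $|s_n-s|<\varepsilon/2$ gives the inclusion $\{x:|h(x)-s_n|<\varepsilon/2\}\subseteq\{x:|h(x)-s|<\varepsilon\}$, so the right-hand side inherits positive measure. The inclusion $\essran(h)\subseteq\overline{\ran}(h)$ is immediate, since positive measure of $\{|h(x)-s|<\varepsilon\}$ forces nonemptiness, producing for each $\varepsilon>0$ a point of $\ran(h)$ within $\varepsilon$ of $s$.

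For part (ii), I would exploit the second countability of $\mathbb{R}^{d+1}$. Since $\essran(h)$ is closed by (i), its complement is open. Each $s$ in this complement admits by definition some $\varepsilon_s>0$ with $\mu(h^{-1}(B_{\varepsilon_s}(s)))=0$. The collection of such balls covers $\essran(h)^c$, and by the Lindel\"of property a countable subcover suffices, so $h^{-1}(\essran(h)^c)$ is a countable union of null sets.

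For part (iii), (ii) yields $|h(x)|\leq\sup\essran(|h|)$ almost everywhere, giving $\Vert h\Vert_{L^\infty}\leq\sup\essran(|h|)$; conversely, for any $r\in\essran(|h|)$, positive measure of $\{||h|-r|<\varepsilon\}$ forces $\Vert h\Vert_{L^\infty}\geq r-\varepsilon$ for all $\varepsilon>0$. For part (iv), the inclusion $\overline{f(\essran(h))}\subseteq\essran(f\circ h)$ follows from continuity of $f$: given $r\in\essran(h)$ and $\varepsilon>0$, continuity at $r$ yields $\delta>0$ with $f(B_\delta(r)\cap\essran(h))\subseteq B_\varepsilon(f(r))$, and intersecting the positive-measure set $\{|h(x)-r|<\delta\}$ with the full-measure set $\{h(x)\in\essran(h)\}$ from (ii) produces positive measure of $\{|f(h(x))-f(r)|<\varepsilon\}$; closedness of $\essran(f\circ h)$ from (i) then permits passage to the closure. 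For the reverse inclusion, take $s\in\essran(f\circ h)$: positive measure of $\{|f(h(x))-s|<\varepsilon\}$ intersected with the full-measure set from (ii) yields some $x$ with $h(x)\in\essran(h)$ and $f(h(x))\in B_\varepsilon(s)$, showing $s\in\overline{f(\essran(h))}$.

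The main obstacle is the measure-theoretic argument in (ii), which requires the Lindel\"of/second-countability reduction to pass from the uncountable family of null preimages $h^{-1}(B_{\varepsilon_s}(s))$ to a countable union; once (ii) is available, the remaining parts are routine. A small subtlety in (iv) is using (ii) to legitimately replace \emph{some $x$} by \emph{some $x$ with $h(x)\in\essran(h)$}, which is exactly where the essential range (as opposed to the full range) enters the image side and makes the closure on the right-hand side tight.
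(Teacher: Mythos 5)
Your proof is correct. The paper itself omits the argument, stating only that the lemma is proven ``in the same way as in the complex case,'' and your proof is exactly that standard argument: the triangle-inequality closedness for (i), the Lindel\"of/second-countability reduction to a countable union of null preimages for (ii), and the use of (ii) to restrict to the full-measure set $\{x:h(x)\in\essran(h)\}$ in (iii) and (iv).
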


Before we now state the actual characterization of the $S$-spectrum in terms of the essential range, we need the following preparatory lemma.

\begin{lem}\label{lem_Set_inclusion}
Let $\varepsilon>0$ and $s\in\mathbb{R}^{d+1}$. Then there exists some $\varepsilon_s>0$, such that there holds
\begin{equation}\label{Eq_Set_inclusion}
[s]+U_{\varepsilon_s}(0)\subseteq\big\{q\in\mathbb{R}^{d+1}\;\big|\;|q^2-2s_0q+|s|^2|<\varepsilon^2\big\}\subseteq[s]+U_\varepsilon(0).
\end{equation}
\end{lem}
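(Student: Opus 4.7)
The plan is to reduce both sides of the claimed sandwich to explicit elementary expressions in three real parameters and then prove a single algebraic identity that implies both inclusions.

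First, I would parametrize $q \in \mathbb{R}^{d+1}$ by setting $u := q_0 - s_0$, $a := |\Im(q)|$ and $b := |\Im(s)|$. A direct computation, using that the square of any imaginary paravector equals minus the square of its modulus, gives the identity
\begin{equation*}
q^2 - 2s_0 q + |s|^2 = (q-s_0)^2 + |\Im(s)|^2 = \bigl(u^2 + b^2 - a^2\bigr) + 2u\,\Im(q),
\end{equation*}
whose scalar and vector parts are orthogonal, so
\begin{equation*}
\bigl|q^2 - 2s_0 q + |s|^2\bigr|^2 = (u^2 + b^2 - a^2)^2 + 4u^2 a^2 =: M.
\end{equation*}
In parallel, minimizing $|q - s_0 - Jb|^2 = u^2 + a^2 + b^2 - 2b\,\Im(q)\cdot J$ over $J \in \mathbb{S}$ (with the maximizer $J = \Im(q)/a$ if $a>0$, and any $J$ otherwise) gives
\begin{equation*}
\operatorname{dist}(q,[s])^2 = u^2 + (a-b)^2 =: D.
\end{equation*}

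The main obstacle, and the crux of the lemma, is then to expand and compare $M$ and $D$. A careful expansion yields the clean algebraic identity
\begin{equation*}
M = D\bigl(D + 4ab\bigr),
\end{equation*}
i.e.
\begin{equation*}
\bigl|q^2 - 2s_0 q + |s|^2\bigr|^2 = \operatorname{dist}(q,[s])^2\,\Bigl(\operatorname{dist}(q,[s])^2 + 4|\Im(q)|\,|\Im(s)|\Bigr).
\end{equation*}
This identity is where the whole statement lives; everything else is bookkeeping.

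From this identity, the right inclusion in \eqref{Eq_Set_inclusion} is immediate: if $M < \varepsilon^4$, then $D^2 \leq M < \varepsilon^4$, hence $\operatorname{dist}(q,[s]) < \varepsilon$, so $q \in [s] + U_\varepsilon(0)$. For the left inclusion, suppose $\operatorname{dist}(q,[s]) < \varepsilon_s \leq 1$; then $a \leq b + 1 = |\Im(s)| + 1$, which gives
\begin{equation*}
M = D(D + 4ab) \leq \varepsilon_s^2\bigl(1 + 4|\Im(s)|(|\Im(s)|+1)\bigr).
\end{equation*}
Choosing
\begin{equation*}
\varepsilon_s := \min\!\left\{1,\ \frac{\varepsilon^2}{\sqrt{1 + 4|\Im(s)|(|\Im(s)|+1)}}\right\}
\end{equation*}
ensures $M < \varepsilon^4$, i.e.\ $|q^2 - 2s_0 q + |s|^2| < \varepsilon^2$, completing the proof.
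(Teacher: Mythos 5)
Your proposal is correct: the identity $M=D(D+4ab)$ does hold, since $D(D+4ab)=(u^2+(a-b)^2)(u^2+(a+b)^2)=u^4+2u^2(a^2+b^2)+(a^2-b^2)^2=(u^2+b^2-a^2)^2+4u^2a^2=M$, and both inclusions follow from it exactly as you describe, with your explicit choice of $\varepsilon_s$. In substance this is the same mechanism as the paper's proof, just packaged differently: the paper chooses $t\in[s]$ lying in the same slice $\mathbb{C}_J$ as $q$, factors $q^2-2s_0q+|s|^2=(q-t)(q-\overline{t})$, and uses $|q^2-2s_0q+|s|^2|=|q-t|\,|q-\overline{t}|$ together with separate elementary estimates for each inclusion. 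Your two factors $D=u^2+(a-b)^2$ and $D+4ab=u^2+(a+b)^2$ are precisely $|q-t|^2$ and $|q-\overline{t}|^2$ for that same $t$, which is also the nearest point of $[s]$ to $q$; so your ``crux identity'' is the paper's factorization made quantitative in coordinates. What your formulation buys is that the link to $\operatorname{dist}(q,[s])$ is explicit, so both inclusions drop out of a single formula with a concrete admissible $\varepsilon_s$, and you sidestep the (harmless but slightly informal) step of choosing $t$ in the same complex plane as $q$. What the paper's version buys is that no polynomial expansion needs to be checked, since multiplicativity of the modulus within $\mathbb{C}_J$ does all the work. Either way, your argument is complete and correct.
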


\begin{proof}
For the first inclusion let $q\in[s]+U_{\varepsilon_s}(0)$, for a not yet specified $\varepsilon_s>0$. I.e., there exists some $t\in[s]$ with $|q-t|<\varepsilon_s$. Clearly it is possible to choose $t$ in the same complex plane as $q$. Hence we can estimate
\begin{align*}
\big|q^2-2s_0q+|s|^2\big|&=\big|q^2-2t_0q+|t|^2\big|=|q-t||q-\overline{t}|\leq|q-t|\big(|q-t|+|t-\overline{t}|\big) \\
&\leq\varepsilon_s\big(\varepsilon_s+2|\Im(t)|\big)=\varepsilon_s\big(\varepsilon_s+2|\Im(s)|\big).
\end{align*}
It is now possible to choose $\varepsilon_s>0$ small enough, such that the right hand side of this inequality is smaller than $\varepsilon^2$. This then proves $|q^2-2s_0q+|s|^2|<\varepsilon^2$. \medskip

For the second inclusion in \eqref{Eq_Set_inclusion}, let $q\in\mathbb{R}^{d+1}$ with $|q^2-2s_0q+|s|^2<\varepsilon^2$. Choose now $t\in[s]$ such that $t$ is in the same complex plane as $q$. Then there is also
\begin{equation*}
|q-t||q-\overline{t}|=|q^2-2s_0q+|s|^2|<\varepsilon^2.
\end{equation*}
From this inequality it the follows immediately that
\begin{align*}
|q-t|^2&\leq|q-t||q-\overline{t}|<\varepsilon^2,\qquad\text{if }|q-t|\leq|q-\overline{t}|, \\
|q-\overline{t}|^2&\leq|q-t||q-\overline{t}|<\varepsilon^2,\qquad\text{if }|q-t|\geq|q-\overline{t}|,
\end{align*}
which means that $q\in U_\varepsilon(t)\cup U_\varepsilon(\overline{t})\subseteq[s]+U_\varepsilon(0)$.
\end{proof}

\begin{thm}\label{thm_Spectrum_of_Mh}
Let $X$ be a $\sigma$-finite measure space, and $h:X\rightarrow\mathbb{R}^{d+1}$ measurable. Then the $S$-spectrum of the multiplication operator $M_h$ is given by
\begin{equation*}
\sigma_S(M_h)=[\essran(h)],
\end{equation*}
where $[\essran(h)]=\big\{s\in\mathbb{R}^{d+1}\;\big|\;[s]\cap\essran(h)\neq\emptyset\big\}$ is the rotation of $\essran(h)$ around the real axis. Moreover, for every $s\in\rho_S(M_h)$ in the $S$-resolvent set, the left $S$-resolvent operator acts as the multiplication operator
\begin{equation}\label{Eq_SL_multiplication}
S_L^{-1}(s,M_h)=M_{S_L^{-1}(s,h(\cdot))}\qquad\text{and}\qquad S_R^{-1}(s,M_h)=M_{S_R^{-1}(s,h(\cdot))}.
\end{equation}
\end{thm}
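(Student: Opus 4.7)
The plan is to translate the operator invertibility condition defining $\sigma_S(M_h)$ into a pointwise condition on $h$ via Lemma~\ref{lem_QsMh} and Theorem~\ref{thm_Bijective}, and then connect that pointwise condition to the essential range using Lemma~\ref{lem_Set_inclusion} and Lemma~\ref{lem_Properties_esssential_range}. The resolvent formulas will then fall out by pointwise computation with Lemma~\ref{lem_Properties_multiplication_operator}.

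First, I would apply Lemma~\ref{lem_QsMh} to write $Q_s[M_h]=M_{g_s}$ where $g_s:=h^2-2s_0h+|s|^2$, and observe that $g_s$ takes values in $\mathcal{N}(\mathbb{R}_d)$ since $h$ is paravector-valued. Theorem~\ref{thm_Bijective} applied to $g_s$ gives that $M_{g_s}$ is bijective with bounded inverse $M_{g_s^{-1}}$ if and only if there exists $\varepsilon>0$ such that $|g_s(x)|\geq\varepsilon$ for almost every $x$. Consequently, $s\in\sigma_S(M_h)$ if and only if for every $\varepsilon>0$ the set $\{x\in X \,:\, |g_s(x)|<\varepsilon\}$ has positive measure, which is exactly the condition $0\in\essran(g_s)$.

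The main step is then to show $0\in\essran(g_s)$ if and only if $[s]\cap\essran(h)\neq\emptyset$, and this is where Lemma~\ref{lem_Set_inclusion} is essential. For the contrapositive of one direction, assume $[s]\cap\essran(h)=\emptyset$; since $[s]$ is compact and $\essran(h)$ closed (Lemma~\ref{lem_Properties_esssential_range}~i)), there exists $\delta>0$ with $([s]+U_\delta(0))\cap\essran(h)=\emptyset$, so by Lemma~\ref{lem_Properties_esssential_range}~ii) the set $\{x\,:\,h(x)\in[s]+U_\delta(0)\}$ has measure zero. The second inclusion in Lemma~\ref{lem_Set_inclusion} then gives that $\{x\,:\,|g_s(x)|<\delta^2\}$ is contained in this null set, so $0\notin\essran(g_s)$. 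Conversely, if $t\in[s]\cap\essran(h)$, then for any $\varepsilon>0$ take the associated $\varepsilon_s$ from Lemma~\ref{lem_Set_inclusion}; the set $\{x\,:\,|h(x)-t|<\varepsilon_s\}$ has positive measure by the definition of essential range, sits inside $\{x\,:\,h(x)\in[s]+U_{\varepsilon_s}(0)\}$ because $t\in[s]$, and hence by the first inclusion of Lemma~\ref{lem_Set_inclusion} lies in $\{x\,:\,|g_s(x)|<\varepsilon^2\}$, yielding $0\in\essran(g_s)$.

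For the resolvent formulas \eqref{Eq_SL_multiplication}, I would start from the definitions in \eqref{Eq_SL_SR} and substitute $Q_s[M_h]^{-1}=M_{g_s^{-1}}$. Using Lemma~\ref{lem_Properties_multiplication_operator}~iii) to multiply multiplication operators, and the fact that right scalar multiplication by $\overline{s}$ on an $L^2$ function is carried out pointwise in the Clifford algebra, one obtains that $S_L^{-1}(s,M_h)$ and $S_R^{-1}(s,M_h)$ act pointwise as multiplication by $g_s(x)^{-1}\overline{s}-h(x)g_s(x)^{-1}$ and $(\overline{s}-h(x))g_s(x)^{-1}$ respectively, which are precisely $S_L^{-1}(s,h(x))$ and $S_R^{-1}(s,h(x))$. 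I expect the main obstacle to be bookkeeping the geometric argument with $[s]+U_\varepsilon(0)$ versus the sublevel sets of $|g_s|$ in step two; the other steps are straightforward applications of the preparatory lemmas.
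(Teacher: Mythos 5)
Your proposal is correct and follows essentially the same route as the paper: reduce via Lemma~\ref{lem_QsMh} and Theorem~\ref{thm_Bijective} to the positive-measure condition on the sublevel sets of $|h^2-2s_0h+|s|^2|$, translate that to $[s]\cap\essran(h)\neq\emptyset$ using both inclusions of Lemma~\ref{lem_Set_inclusion} together with Lemma~\ref{lem_Properties_esssential_range}, and obtain the resolvent formulas by pointwise substitution with Lemma~\ref{lem_Properties_multiplication_operator}. Your rephrasing of the spectral condition as $0\in\essran(h^2-2s_0h+|s|^2)$ and your use of the contrapositive in place of the paper's contradiction argument are only cosmetic differences.
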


\begin{proof}
If we use the representation $Q_s[M_h]=M_{h^2-2s_0h+|s|^2}$ from Lemma~\ref{lem_QsMh}, then Theorem~\ref{thm_Bijective} shows that $Q_s[M_h]$ is not bijective if and only if
\begin{equation}\label{Eq_S_spectrum_1}
\mu\big\{x\in X\;\big|\;|h(x)^2-2s_0h(x)+|s|^2|<\varepsilon^2\big\}>0,\qquad\text{for every }\varepsilon>0.
\end{equation}
Here $\mu$ is the measure, corresponding to the space $X$. We now want to show that \eqref{Eq_S_spectrum_1} is equivalent to $s\in[\essran(h)]$. \medskip

For the first implication let $s\in[\essran(h)]$ and $\varepsilon>0$ arbitrary. Then there exists some $t\in\essran(h)$, such that $t\in[s]$. By Lemma~\ref{lem_Set_inclusion} there then exists some $\varepsilon_s>0$, with
\begin{equation*}
U_{\varepsilon_s}(t)\subseteq[s]+U_{\varepsilon_s}(0)\subseteq\big\{q\in\mathbb{R}^{d+1}\;\big|\;|q^2-2s_0q+|s|^2|<\varepsilon^2\big\}.
\end{equation*}
Hence, we also get
\begin{equation*}
\mu\big\{x\in X\;\big|\;|h(x)^2-2s_0h(x)+|s|^2|<\varepsilon^2\big\}\geq\mu\big\{x\in X\;\big|\;|h(x)-t|<\varepsilon_s\big\}>0,
\end{equation*}
where the measure of the set on the right hand side is positive due to $t\in\essran(h)$ and the definition \eqref{Eq_essran} of the essential range. \medskip

For the inverse implication, let \eqref{Eq_S_spectrum_1} be satisfied and assume that $s\notin[\essran(h)]$. This means that $[s]\cap\essran(h)=\emptyset$. Since $[s]$ is compact and $\essran(h)$ is closed, there exists some $\varepsilon>0$, such that
\begin{equation*}
[s]+U_\varepsilon(0)\subseteq\mathbb{R}^{d+1}\setminus\essran(h).
\end{equation*}
By Lemma~\ref{lem_Set_inclusion} there then also is
\begin{equation*}
\big\{q\in\mathbb{R}^{d+1}\;\big|\;|q^2-2s_0q+|s|^2|<\varepsilon^2\big\}\subseteq\mathbb{R}^{d+1}\setminus\essran(h).
\end{equation*}
Since the set in \eqref{Eq_S_spectrum_1} has positive measure by assumption, we also get
\begin{equation*}
\mu\big\{x\in X\;\big|\;h(x)\in\mathbb{R}^{d+1}\setminus\essran(h)\big\}\geq\mu\big\{x\in X\;\big|\;|h(x)^2-2s_0h(x)+|s|^2|<\varepsilon^2\big\}>0.
\end{equation*}
However, since the left hand side has measure zero by Lemma~\ref{lem_Properties_esssential_range}~ii), this is a contradiction. Hence we have proven $s\in[\essran(h)]$. \medskip

In order to verify the explicit form of the left $S$-resolvent operator \eqref{Eq_SL_multiplication}, we first note that by its definition in \eqref{Eq_S_resolvent_operators}, there is
\begin{equation*}
S_L^{-1}(s,M_h)=Q_s[M_h]^{-1}\overline{s}-M_hQ_s[M_h]^{-1}.
\end{equation*}
Using now \eqref{Eq_QsMh}, the form of the inverse multiplication operator in Theorem~\ref{thm_Bijective}, as well as the product formula for the multiplication operator in Lemma~\ref{lem_Properties_multiplication_operator}~iii), we can further simplify
\begin{align*}
S_L^{-1}(s,M_h)&=M_{h^2-2s_0h+|s|^2}^{-1}\overline{s}-M_hM_{h^2-2s_0h+|s|^2}^{-1} \\
&=M_{(h^2-2s_0h+|s|^2)^{-1}}\overline{s}-M_hM_{(h^2-2s_0h+|s|^2)^{-1}} \\
&=M_{(h^2-2s_0h+|s|^2)^{-1}\overline{s}-h(h^2-2s_0h+|s|^2)^{-1}}=M_{S_L^{-1}(s,h(\cdot))}.
\end{align*}
The representation of the right $S$-resolvent operator is analog.
\end{proof}

Now, since Theorem~\ref{thm_Spectrum_of_Mh} tells us how the $S$-spectrum of the multiplication operator looks like, we turn our attention to the $H^\infty$-functional calculus of Definition~\ref{defi_Hinfty_SFC}. To do so, we start with the bisectoriality, according to Definition~\ref{defi_Bisectorial_operators}, of the operator $M_h$.

\begin{thm}
Let $X$ be a $\sigma$-finite measure space, and $h:X\rightarrow\mathbb{R}^{d+1}$ measurable, with
\begin{equation}\label{Eq_Bisectorial_assumption}
\essran(h)\subseteq\overline{D_\omega},
\end{equation}
for some $\omega\in(0,\frac{\pi}{2})$. Then the multiplication operator $M_h$ is bisectorial of angle $\omega$.
\end{thm}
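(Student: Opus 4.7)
The plan is to verify the three conditions of Definition~\ref{defi_Bisectorial_operators}: closedness, spectral inclusion, and resolvent decay. Closedness of $M_h$ is immediate from Lemma~\ref{lem_Properties_multiplication_operator}~i). The inclusion $\sigma_S(M_h)\subseteq\overline{D_\omega}$ follows by combining Theorem~\ref{thm_Spectrum_of_Mh}, which identifies $\sigma_S(M_h)=[\essran(h)]$, with the axial symmetry of $\overline{D_\omega}$. The latter is clear because $\overline{D_\omega}$ depends on $s$ only through $s_0$ and $|\Im(s)|$ via the argument $\Arg$. Hence $\essran(h)\subseteq\overline{D_\omega}$ forces $[\essran(h)]\subseteq\overline{D_\omega}$, and the core of the theorem reduces to the decay estimate on $\|S_L^{-1}(s,M_h)\|$ for $s\notin D_\varphi\cup\{0\}$.

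For the operator norm bound, I would once more invoke Theorem~\ref{thm_Spectrum_of_Mh} to write $S_L^{-1}(s,M_h)=M_{S_L^{-1}(s,h(\cdot))}$ and then apply Lemma~\ref{lem_Properties_multiplication_operator}~ii) to bound its operator norm by $2^{d/2}$ times the $L^\infty$-norm of the kernel. Since $h(x)\in\overline{D_\omega}$ for almost every $x\in X$ by Lemma~\ref{lem_Properties_esssential_range}~ii), the entire question is then reduced to the scalar pointwise estimate
\begin{equation*}
|S_L^{-1}(s,q)|\leq\frac{C_\varphi'}{|s|},\qquad s\in\mathbb{R}^{d+1}\setminus(D_\varphi\cup\{0\}),\ q\in\overline{D_\omega}.
\end{equation*}

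To prove this pointwise bound, I would exploit the fact that $Q_s[q]=q^2-2s_0q+|s|^2$ is a polynomial in $q$ with real coefficients and therefore commutes with $q$. This yields the simplification $S_L^{-1}(s,q)=Q_s[q]^{-1}(\overline{s}-q)$, and since left multiplication by an element of the commutative field $\mathbb{R}[q]$ preserves moduli of paravectors, $|S_L^{-1}(s,q)|=|\overline{s}-q|/|Q_s[q]|$. A short direct computation then produces the factorization
\begin{equation*}
|Q_s[q]|^2=\bigl((s_0-q_0)^2+(|\Im(s)|-|\Im(q)|)^2\bigr)\bigl((s_0-q_0)^2+(|\Im(s)|+|\Im(q)|)^2\bigr).
\end{equation*}
Setting $J:=\Im(s)/|\Im(s)|$ (or any $J\in\mathbb{S}$ if $\Im(s)=0$) and $\widetilde{q}:=q_0+J|\Im(q)|\in[q]$, the two factors on the right are exactly $|s-\widetilde{q}|^2$ and $|s-\overline{\widetilde{q}}|^2$. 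Combined with the triangle inequality $|\overline{s}-q|^2=(s_0-q_0)^2+|\Im(s)+\Im(q)|^2\leq|s-\overline{\widetilde{q}}|^2$, this gives $|S_L^{-1}(s,q)|\leq 1/|s-\widetilde{q}|$.

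Finally, since $\overline{D_\omega}$ is axially symmetric one has $\widetilde{q}\in[q]\subseteq\overline{D_\omega}$, and both $s$ and $\widetilde{q}$ lie on the single complex plane $\mathbb{C}_J$. A classical planar sectorial estimate (treating separately whether $s$ and $\widetilde{q}$ lie in the same or opposite halves of $\overline{D_\omega}$) then provides the bound $|s-\widetilde{q}|\geq\sin(\varphi-\omega)|s|$, which closes the argument with $C_\varphi=2^{d/2}/\sin(\varphi-\omega)$. The main obstacle I expect is the factorization of $|Q_s[q]|^2$ as the product of two squared planar distances; once this identity is in hand the rest is the routine geometry of sectors in $\mathbb{C}_J$.
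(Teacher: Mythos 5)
Your proof is correct and follows the same overall architecture as the paper's: closedness is immediate, the spectral inclusion is obtained from Theorem~\ref{thm_Spectrum_of_Mh} together with the axial symmetry of $\overline{D_\omega}$, and the resolvent bound is reduced, via $S_L^{-1}(s,M_h)=M_{S_L^{-1}(s,h(\cdot))}$ and Lemma~\ref{lem_Properties_multiplication_operator}~ii), to a uniform scalar estimate of $|S_L^{-1}(s,q)|$ over $q\in\overline{D_\omega}$ and $s\notin D_\varphi\cup\{0\}$. The one place where you genuinely diverge is how that scalar estimate is finished. The paper bounds the numerator crudely by $|s|+|q|$ and the denominator $|q^2-2s_0q+|s|^2|$ from below by $|q-s|^2$, then optimizes the quotient over the double sector, arriving at the constant $2^{\frac{d}{2}-1}\big(|s|\sqrt{1+\cos(\varphi-\omega)}\,(\sqrt{2}-\sqrt{1+\cos(\varphi-\omega)})\big)^{-1}$. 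You instead use the exact factorization $|Q_s[q]|=|s-\widetilde{q}|\,|s-\overline{\widetilde{q}}|$ with $\widetilde{q}\in[q]$ taken in the slice of $s$ (the same identity underlying Lemma~\ref{lem_Set_inclusion}), cancel the factor $|s-\overline{\widetilde{q}}|$ against the numerator via $|\overline{s}-q|\le|s-\overline{\widetilde{q}}|$, and conclude with the classical planar distance-to-sector estimate, obtaining $|S_L^{-1}(s,q)|\le 1/|s-\widetilde{q}|\le\big(|s|\sin(\varphi-\omega)\big)^{-1}$ and hence $C_\varphi=2^{d/2}/\sin(\varphi-\omega)$. Both routes are valid; yours is sharper and makes the slice geometry explicit, at the small extra cost of justifying that $Q_s[q]$ is a nonzero paravector commuting with $q$, so that $S_L^{-1}(s,q)=Q_s[q]^{-1}(\overline{s}-q)$ and moduli multiply --- which you do.
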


\begin{proof}
Since $\overline{D_\omega}$ is axially symmetric, it follows from Theorem~\ref{thm_Spectrum_of_Mh} together with \eqref{Eq_Bisectorial_assumption}, that
\begin{equation*}
\sigma_S(M_h)=[\essran(h)]\subseteq[\overline{D_\omega}]=\overline{D_\omega}.
\end{equation*}
In order to prove the $S$-resolvent estimate \eqref{Eq_SL_estimate}, let $\varphi\in(\omega,\pi)$ and $s\in\mathbb{R}^{d+1}\setminus(D_\varphi\cup\{0\})$. Using the explicit form of the $S$-resolvent operator \eqref{Eq_SL_multiplication} as well as the operator norm of the multiplication operator in Lemma~ \ref{lem_Properties_multiplication_operator}~ii), we get
\begin{align*}
\Vert S_L^{-1}(s,M_h)\Vert&=\Vert M_{S_L^{-1}(s,h(\cdot))}\Vert\leq 2^{\frac{d}{2}}\Vert S_L^{-1}(s,h(\cdot))\Vert_{L^\infty} \\
&=2^{\frac{d}{2}}\esssup_{x\in X}\big|Q_s(h(x))^{-1}\overline{s}-h(x)Q_s(h(x))^{-1}\big| \\
&\leq 2^{\frac{d}{2}}\esssup_{x\in X}\frac{|s|+|h(x)|}{|h(x)^2-2s_0h(x)+|s|^2|}\leq 2^{\frac{d}{2}}\sup\limits_{q\in\overline{D_\omega}}\frac{|s|+|q|}{|q^2-2s_0q+|s|^2|} \\
&\leq 2^{\frac{d}{2}}\sup\limits_{q\in\overline{D_\omega}}\frac{|s|+|q|}{|q-s|^2}\le\frac{2^{\frac{d}{2}-1}}{|s|\sqrt{1+\cos(\varphi-\omega)}\big(\sqrt{2}-\sqrt{1+\cos(\varphi-\omega)}\big)}. \qedhere
\end{align*}
\end{proof}

As the final part of this application section, we want to derive how the $H^\infty$-functional calculus of Definition~\ref{defi_Hinfty_SFC} acts on the multiplication operator.

\begin{thm}\label{thm_Hinfty_Mh}
Let $X$ be a $\sigma$-finite measure space, and $h:X\rightarrow\mathbb{R}^{d+1}$ measurable, with $\essran(h)\subseteq\overline{D_\omega}$, for some $\omega\in(0,\frac{\pi}{2})$. Then for every $f\in\mathcal{SH}_L^\poly(D_\theta)$ or $f\in\mathcal{SH}_R^\poly(D_\theta)$, $\theta\in(\omega,\frac{\pi}{2})$, there holds
\begin{equation}\label{Eq_Hinfty_Mh}
f(M_h)=M_{f\circ h}.
\end{equation}
\end{thm}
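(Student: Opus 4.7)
The plan is to first establish the formula $g(M_h) = M_{g \circ h}$ at the level of the $\omega$-functional calculus, and then leverage the regularization definition of the $H^\infty$-functional calculus to deduce the general case. For the $\omega$-functional calculus, I would take $g \in \mathcal{SH}_L^0(D_\theta)$ (the argument for $\mathcal{SH}_R^0$ is analogous). Theorem~\ref{thm_Spectrum_of_Mh} gives $S_L^{-1}(s, M_h) = M_{S_L^{-1}(s, h(\cdot))}$, so
\begin{equation*}
g(M_h) = \frac{1}{2\pi} \int_{\partial D_\varphi \cap \mathbb{C}_J} M_{S_L^{-1}(s, h(\cdot))} \, ds_J \, g(s).
\end{equation*}
The integral converges in operator norm; pairing with an arbitrary $u \in L^2(X)$, I would use Fubini to exchange the Bochner integral against $ds_J$ with pointwise evaluation in $x \in X$. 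The uniform bound $\|S_L^{-1}(s, h(x))\| \le C_\varphi/|s|$ needed for this exchange follows from bisectoriality together with $\essran(h) \subseteq \overline{D_\omega}$. This yields $(g(M_h)u)(x) = g(h(x))u(x)$ a.e., i.e., $g(M_h) = M_{g \circ h}$.

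Applying this to the regularizer $e(s) = s^m/(1+s^2)^m \in \mathcal{N}^0(D_\theta)$ and to $ef$, $fe$ (which lie in $\mathcal{SH}^0$ whenever $m > \alpha$, with $\alpha$ the polynomial growth exponent of $f$), one obtains $e(M_h) = M_{e \circ h}$, $(ef)(M_h) = M_{(ef)\circ h}$ and $(fe)(M_h) = M_{(fe)\circ h}$. For $f \in \mathcal{SH}_L^{\poly}(D_\theta)$, Definition~\ref{defi_Hinfty_SFC} combined with Theorem~\ref{thm_Bijective}~i) then gives
\begin{equation*}
f(M_h) = e(M_h)^{-1}(ef)(M_h) = M_{(e \circ h)^{-1}} M_{(ef)\circ h}.
\end{equation*}
By Lemma~\ref{lem_Properties_multiplication_operator}~iii) this product is contained in $M_{f \circ h}$ with domain $\dom(M_{f\circ h}) \cap \dom(M_{(ef)\circ h})$; since $e$ is bounded on $\overline{D_\omega}$, we have $\dom(M_{f \circ h}) \subseteq \dom(M_{(ef) \circ h})$, so the two domains coincide and the inclusion becomes equality.

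For $f \in \mathcal{SH}_R^{\poly}(D_\theta)$ the calculus reads $f(M_h) = \overline{M_{(fe) \circ h} M_{(e \circ h)^{-1}}}$. The uncompleted product is again a restriction of $M_{f \circ h}$, and since $M_{f \circ h}$ is closed by Lemma~\ref{lem_Properties_multiplication_operator}~i), its closure is necessarily single-valued and contained in $M_{f \circ h}$. To obtain the reverse inclusion, I would approximate any $u \in \dom(M_{f \circ h})$ by $u_n = \mathbf{1}_{A_n} u$ on the increasing exhaustion $A_n := \{x \in X : 1/n < |h(x)| < n\}$; on $A_n$ the function $(e \circ h)^{-1}$ is bounded, so $u_n \in \ran(e(M_h))$, and dominated convergence yields $u_n \to u$ and $M_{f \circ h} u_n \to M_{f \circ h} u$ in $L^2(X)$.

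The main obstacle I expect is this last point: the right $H^\infty$-functional calculus is in general only a multivalued operator (the closure of $(fe)(T)e(T)^{-1}$ may fail to be the graph of a single-valued operator), so one must verify that in the multiplication setting the closure really coincides with the genuine closed operator $M_{f \circ h}$. This amounts to exhibiting a core for $M_{f \circ h}$ inside $\ran(e(M_h))$, which the exhaustion argument above achieves once the intersection $\dom(M_{f \circ h}) \cap \dom(M_{(fe) \circ h})$ is shown to be dense in $\dom(M_{f\circ h})$ in graph norm.
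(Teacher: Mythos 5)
Your proposal follows essentially the same route as the paper's proof: first establish $g(M_h)=M_{g\circ h}$ at the level of the $\omega$-functional calculus via the resolvent identity $S_L^{-1}(s,M_h)=M_{S_L^{-1}(s,h(\cdot))}$ and the pointwise Cauchy formula, then pass to $\mathcal{SH}^{\poly}$ through the regularizer $e$. If anything you are more careful than the paper on the last step for $f\in\mathcal{SH}_R^{\poly}(D_\theta)$: the paper only invokes closedness of $M_{f\circ h}$, which gives the inclusion $\overline{M_{(fe)\circ h}M_{(e\circ h)^{-1}}}\subseteq M_{f\circ h}$, whereas your exhaustion argument on $A_n=\{x: 1/n<|h(x)|<n\}$ supplies the core/density argument needed for the reverse inclusion.
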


\begin{proof}
Let us start with the $\omega$-functional calculus \eqref{Eq_omega_left} for functions $f\in\mathcal{SH}_L^0(D_\theta)$. Evaluated for $u\in L^2(X)$, and $x\in X$, we have
\begin{align*}
f(M_h)u(x)&=\frac{1}{2\pi}\int_{\partial D_\varphi\cap\mathbb{C}_J}S_L^{-1}(s,M_h)ds_Jf(s)u(x) \\
&=\frac{1}{2\pi}\int_{\partial D_\varphi\cap\mathbb{C}_J}S_L^{-1}(s,h(x))ds_Jf(s)u(x),
\end{align*}
where in the second line we plugged in the action \eqref{Eq_SL_multiplication} of the $S$-resolvent operator. Using now the Cauchy integral formula
\begin{equation*}
f(q)=\frac{1}{2\pi}\int_{\partial D_\varphi\cap\mathbb{C}_J}S_L^{-1}(s,q)ds_Jf(s),\qquad q\in D_\varphi,
\end{equation*}
with $q=h(x)$, gives
\begin{align*}
f(M_h)u(x)=f(h(x))u(x),\qquad u\in L^2(X),\text{ for a.e. }x\in X .
\end{align*}
At least this is true for every $x\in X$ such that $h(x)\in\essran(h)$, which is almost everywhere due to Lemma~\ref{lem_Properties_esssential_range}~iii). Since this is true for every $u\in L^2(X)$ and a.e. $x\in X$, we see that $f(M_h)$ acts as the multiplication operator with the function $f\circ h$, i.e. we have proven \eqref{Eq_Hinfty_Mh} for functions $f\in\mathcal{SH}_L^0(D_\theta)$. The same formula for $f\in\mathcal{SH}_R^0(D_\theta)$ is analog. \medskip

The $H^\infty$-functional calculus \eqref{Eq_Hinfty_Mh} for functions $f\in\mathcal{SH}_L^\poly(D_\theta)$, is defined as
\begin{equation*}
f(M_h)=e(M_h)^{-1}(ef)(M_h).
\end{equation*}
With the already derived identity \eqref{Eq_Hinfty_Mh} for functions in $\mathcal{SH}_L^0(D_\theta)$, in particular for $e$ and $ef$, we can rewrite this definition as
\begin{equation*}
f(M_h)=M_{e\circ h}^{-1}M_{(ef)\circ h}=M_{(e\circ h)^{-1}}M_{(ef)\circ h}=M_{(e\circ h)^{-1}((ef)\circ h)}=M_{f\circ h}.
\end{equation*}
On the other hand, for $f\in\mathcal{SH}_R^\poly(D_\theta)$, the definition \eqref{Eq_Right_Hinfty} of the $H^\infty$-functional calculus for right holommorphic functions tells us that
\begin{equation*}
f(M_h)=\overline{(fe)(M_h)e(M_h)^{-1}}=\overline{M_{(ef)\circ h)(e\circ h)^{-1}}}=M_{((ef)\circ h)(e\circ h)^{-1}}=M_{f\circ h},
\end{equation*}
where, additionally to the left case, we used that the multiplication operator is closed.
\end{proof}

\begin{cor}
Let $X$ be a $\sigma$-finite measure space, and $h:X\rightarrow\mathbb{R}^{d+1}$ measurable, with $\essran(h)\subseteq\overline{D_\omega}$, for some $\omega\in(0,\frac{\pi}{2})$. Then for every $f\in\mathcal{SH}_L^\poly(D_\theta)$ or $f\in\mathcal{SH}_R^\poly(D_\theta)$, $\theta\in(\omega,\frac{\pi}{2})$, there is
\begin{equation*}
f(M_h)^*=M_{\overline{f}\circ h}.
\end{equation*}
\end{cor}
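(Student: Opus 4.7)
The plan is to chain together three results already established in the paper. By Theorem~\ref{thm_Hinfty_Mh}, the $H^\infty$-functional calculus of $M_h$ is itself a multiplication operator, namely $f(M_h) = M_{f\circ h}$, and this identity holds uniformly in the left and the right hyperholomorphic cases (in the right case, the closure taken in the definition of the right $H^\infty$-functional calculus is already absorbed because the multiplication operator is closed). Consequently, the left hand side of the corollary becomes the adjoint of a multiplication operator, which by Theorem~\ref{thm_Adjoint_multiplication_operator} is again a multiplication operator with the conjugated symbol:
\begin{equation*}
f(M_h)^* = M_{f\circ h}^* = M_{\overline{f\circ h}}.
\end{equation*}

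The final step is the pointwise identity $\overline{(f\circ h)(x)} = \overline{f(h(x))} = (\overline{f}\circ h)(x)$, which holds trivially from the definition. This yields $M_{\overline{f\circ h}} = M_{\overline{f}\circ h}$ and closes the argument.

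A small detail worth flagging is that Theorem~\ref{thm_Adjoint_multiplication_operator} was stated for measurable functions $h : X \to \mathbb{R}_d$, so we need to verify that $f\circ h : X \to \mathbb{R}_d$ is measurable before applying it. This is routine: on the full measure set $\{x \in X : h(x) \in \essran(h)\}$ (see Lemma~\ref{lem_Properties_esssential_range}~ii)) we have $h(x) \in \overline{D_\omega} \subseteq D_\theta$ by hypothesis, and $f$ is continuous on $D_\theta$, so $f \circ h$ is measurable as the composition of a measurable function with a continuous function on a full measure set.

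The main (mild) obstacle I anticipate is confirming the right hyperholomorphic case, where $f(M_h)$ could in principle be multivalued according to Definition~\ref{defi_Hinfty_SFC}. The clean resolution is that Theorem~\ref{thm_Hinfty_Mh} precisely shows the closure does not genuinely produce a multivalued object for multiplication operators — one literally obtains $M_{f\circ h}$ as a closed operator, whose adjoint \eqref{Eq_Adjoint_operator} is unambiguously defined. Alternatively, one could route the argument through Theorem~\ref{thm_Hinfty_Tstar} by writing $f(M_h)^* = f^\#(M_h^*) = f^\#(M_{\overline{h}}) = M_{f^\#\circ \overline{h}}$ and then checking $f^\#\circ \overline{h} = \overline{f}\circ h$ via the definition $f^\#(s) = \overline{f(\overline{s})}$; this gives the same conclusion but is less direct than the first path.
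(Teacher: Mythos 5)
Your proof is correct, and your primary route is genuinely (if mildly) different from the paper's. The paper proves the corollary by the single chain $f(M_h)^*=f^\#(M_h^*)=f^\#(M_{\overline{h}})=M_{f^\#\circ\overline{h}}=M_{\overline{f}\circ h}$, i.e.\ it leads with Theorem~\ref{thm_Hinfty_Tstar} (the adjoint intertwining via $f^\#$), then applies Theorem~\ref{thm_Adjoint_multiplication_operator} and Theorem~\ref{thm_Hinfty_Mh}. You instead compute $f(M_h)=M_{f\circ h}$ first via Theorem~\ref{thm_Hinfty_Mh}, take the adjoint of the resulting multiplication operator via Theorem~\ref{thm_Adjoint_multiplication_operator}, and finish with the pointwise identity $\overline{f\circ h}=\overline{f}\circ h$; this bypasses Theorem~\ref{thm_Hinfty_Tstar} entirely. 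The alternative you sketch at the end is in fact exactly the paper's chosen proof. What your route buys is independence from the general $f(T^*)=f^\#(T)^*$ machinery, so the corollary reduces to a statement purely about multiplication operators and a trivial conjugation; what the paper's route buys is exhibiting the corollary as a direct instance of the general adjoint formula. Your side remarks are also on point: the measurability of $f\circ h$, and the observation that Theorem~\ref{thm_Hinfty_Mh} already collapses the a priori multivalued right $H^\infty$-calculus to the single-valued closed operator $M_{f\circ h}$ (so that the classical adjoint of Definition~\ref{defi_Adjoint_operator} applies), are precisely the points that need to be checked, and you check them.
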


\begin{proof}
Consecutively applying the results of Theorem~\ref{thm_Hinfty_Tstar}, Theorem~\ref{thm_Adjoint_multiplication_operator}, and Theorem~\ref{thm_Hinfty_Mh}, as well as the definition of $f^\#$ in \eqref{Eq_fsharp}, gives
\begin{equation*}
f(M_h)^*=f^\#(M_h^*)=f^\#(M_{\overline{h}})=M_{f^\#\circ\overline{h}}=M_{\overline{f}\circ h}. \qedhere
\end{equation*}
\end{proof}

\section{Concluding remarks on applications and research directions}\label{SEC_CONCLUD_RMK}

The spectral theory based on the $S$-spectrum was inspired by quaternionic quantum mechanics and began its development in 2006. A comprehensive introduction can be found in \cite{CGK}, with further explorations done in \cite{ACS2016,AlpayColSab2020,ColomboSabadiniStruppa2011}. Applications on fractional powers of operators are investigated in \cite{CGdiffusion2018,CG18,FJBOOK,JONAMEM,JONADIRECT} and some results from classical interpolation theory have been recently extended into this setting \cite{COLSCH}. The spectral theory on the $S$-spectrum extends complex spectral theory to noncommuting operators, broadening its applicability to various research fields. Without claiming completeness we mention: \medskip

\textit{Quaternionic formulation of quantum mechanics.} The interest in spectral theory for quaternionic operators is motivated by the 1936 paper \cite{BF} on the logic of quantum mechanics by G. Birkhoff, J. von Neumann. There, the authors showed that Schrödinger equation can be written basically in the complex or in the quaternionic setting, see also the book of Adler \cite{adler}. \medskip

\textit{Vector analysis.} It is the natural spectral theory for operaotrs appearing in vector analysis such as the gradient operator with nonconstant coefficients in $n$ dimensions, represents various physical laws, such as Fourier's law for heat propagation and Fick's law for mass transfer diffusion. This can be expressed as
\begin{equation*}
T=\sum\nolimits_{i=1}^ne_ia_i(x)\partial_{x_i},\qquad x\in\Omega,
\end{equation*}
and is associated with various boundary conditions, as discussed in \cite{CMS24}. \medskip

\textit{Differential geometry.} This theory applies to Dirac operators on manifold. For more details study the reader can refer to the book \cite{DiracHarm}. As a special case the paper \cite{DIRACHYPSPHE} considers the Dirac operator in hyperbolic and spherical spaces, that takes the explicit forms
\begin{equation*}
\mathcal{D}_H=x_n\sum\nolimits_{i=1}^ne_i\partial_{x_i}-\frac{n-1}{2}e_n\qquad\text{and}\qquad\mathcal{D}_S=(1+|x|^2)\sum\nolimits_{i=1}^ne_i\partial_{x_i}-nx,
\end{equation*}
where for $\mathcal{D}_H$ we pick $(x_1,\dots,x_{n-1},x_n)\in\mathbb{R}^{n-1}\times\mathbb{R}^+$ and $(x_1,...,x_n)\in\mathbb{R}^n$ for $\mathcal{D}_S$. \medskip

\textit{Hypercomplex analysis.} The well-known Dirac operator and its conjugate
\begin{equation*}
D=\partial_{x_0}+\sum\nolimits_{i=1}^ne_i\partial_{x_i}\qquad\text{and}\qquad\overline{D}=\partial_{x_0}-\sum\nolimits_{i=1}^ne_i\partial_{x_i},
\end{equation*}
are widely investigated in \cite{DSS} as well as in \cite{DiracHarm}. For integers $\alpha,\beta,m$, the operators of the Dirac fine structure on the $S$-spectrum are defined as
\begin{equation*}
T_{\alpha,m}=D^\alpha(D\overline{D})^m\qquad\text{and}\qquad\widetilde{T}_{\beta,m}=\overline{D}^\beta(D\overline{D})^m.
\end{equation*}
The fine structures on the $S$-spectrum constitute a set of function spaces and their related functional calculi. These structures have been introduced and studied in recent works \cite{polypolyFS,CDPS1,Fivedim,Polyf1,Polyf2}, while their respective $H^\infty$-versions are investigated in \cite{MILANJPETER,MPS23}. \medskip

The $H^\infty$-functional calculi also have a broader context in the recently introduced fine structures on the $S$-spectrum. These are function spaces of nonholomorphic functions derived from the Fueter-Sce extension theorem, which connects slice hyperholomorphic and axially monogenic functions, the two main notions of holomorphicity in the Clifford setting. The connection is established through powers of the Laplace operator in a higher dimension, see \cite{Fueter,TaoQian1,Sce} and also the translation \cite{ColSabStrupSce}. \medskip

It is important to highlight that in the hypercomplex setting there exists another spectral theory based on the monogenic spectrum, which was initiated by Jefferies, McIntosh and J. Picton-Warlow in \cite{JM}. This theory is founded on monogenic functions, which are functions in the kernel of the Dirac operator, and their associated Cauchy formula (see \cite{DSS} for a comprehensive treatment). The monogenic spectral theory, based on monogenic functions \cite{DSS}, has been extensively developed and is well described in the seminal books \cite{JBOOK} and \cite{TAOBOOK}. These texts provide an in-depth exploration of the theory, with particular emphasis on the $H^\infty$-functional calculus in the monogenic setting.

\section*{Declarations and statements}

\textbf{Data availability}. The research in this paper does not imply use of data. \medskip

\textbf{Conflict of interest}. The authors declare that there is no conflict of interest. \medskip

\textbf{Acknowledgements:} Fabrizio Colombo is supported by MUR grant Dipartimento di Eccellenza 2023-2027. Peter Schlosser was funded by the Austrian Science Fund (FWF) under Grant No. J 4685-N and by the European Union--NextGenerationEU.

\end{document}